\sloppy\pagestyle{plain}
\newtheorem{theorem}[equation]{Theorem}
\newtheorem*{theorem*}{Theorem}
\newtheorem{proposition}[equation]{Proposition}
\newtheorem*{proposition*}{Proposition}
\newtheorem{lemma}[equation]{Lemma}
\newtheorem{corollary}[equation]{Corollary}
\newtheorem*{corollary*}{Corollary}
\newtheorem*{problem*}{Problem}
\newtheorem*{question*}{Question}
\newtheorem*{construction*}{Construction}
\newtheorem*{maintheorem*}{Main Theorem}
\theoremstyle{definition}
\newtheorem{example}[equation]{Example}
\newtheorem*{example*}{Example}
\newtheorem*{definition*}{Definition}
\theoremstyle{remark}
\newtheorem{remark}[equation]{Remark}
\newtheorem*{remark*}{Remark}
\makeatletter\@addtoreset{equation}{section} \makeatother
\newcommand{\mumu}{\boldsymbol{\mu}}
\author{Ivan Cheltsov and Alan Thompson}
\title{K-moduli of Fano threefolds in family \textnumero 3.10}
\address{\emph{Ivan Cheltsov}
\newline
\textnormal{University of Edinburgh,  Edinburgh, Scotland}
\newline
\textnormal{\texttt{i.cheltsov@ed.ac.uk}}}
\address{\emph{Alan Thompson}
\newline
\textnormal{Loughborough University, Loughborough, England}
\newline
\textnormal{\texttt{a.m.thompson@lboro.ac.uk}}}
\begin{document}

\begin{abstract}
We find all K-polystable limits of smooth Fano threefolds in family~\textnumero 3.10.
\end{abstract}

\maketitle
\addtocontents{toc}{\protect\setcounter{tocdepth}{1}}
\tableofcontents

Throughout this paper, all varieties are assumed to be projective and defined over~$\mathbb{C}$.

\section{Introduction}
\label{section:intro}

Let $Q$ be a~smooth quadric threefold in $\mathbb{P}^4$, let $C_1$ and $C_2$ be disjoint smooth irreducible conics in the~quadric $Q$.
Then we can choose coordinates $x$, $y$, $z$, $t$, $w$ on~$\mathbb{P}^4$ such that
\begin{align*}
C_1&=\{x=0,y=0,w^2+zt=0\},\\
C_2&=\{z=0,t=0,w^2+xy=0\},
\end{align*}
and $Q$ is one of the~following quadrics:
\begin{align*}
Q&=\big\{w^2+xy+zt=a(xt+yz)+b(xz+yt)\big\},\ \text{$(a,b)\in\mathbb{C}^2$ such that $a\pm b\ne \pm 1$},\tag{$\beth$}\\
Q&=\big\{w^2+xy+zt=a(xt+yz)+xz\big\},\ \text{$a\in\mathbb{C}$ such that $a\ne\pm 1$},\tag{$\gimel$}\\
Q&=\big\{w^2+xy+zt=xt+xz\big\}.\tag{$\daleth$}
\end{align*}

Let $\pi\colon X\to Q$ be the~blow up of the~quadric threefold $Q$ along the~conics $C_1$~and~$C_2$.
Then~$X$ is a~smooth Fano~threefold in the~deformation family \textnumero 3.10,
and all smooth  members of this deformation family  can be obtained in this way.

Alternatively, we can describe $X$ as
a~complete intersection in $\mathbb{P}^1\times\mathbb{P}^1\times\mathbb{P}^4$ of three smooth divisors of degree $(1,0,1)$, $(0,1,1)$, $(0,0,2)$.
Namely, if $Q$ is the~quadric $(\beth)$,~then
\begin{equation}
\label{equation:beth}\tag{$\beth$}
X=\big\{u_1x=v_1y,u_2z=v_2t,w^2+xy+zt=a(xt+yz)+b(xz+yt)\big\}
\end{equation}
where $([u_1:v_1],[u_2:v_2])$ are coordinates on $\mathbb{P}^1\times\mathbb{P}^1$.
If $Q$ is the~quadric $(\gimel)$, then
\begin{equation}
\label{equation:gimel}\tag{$\gimel$}
X=\big\{u_1x=v_1y,u_2z=v_2t,w^2+xy+zt=a(xt+yz)+xz\big\}
\end{equation}
Finally, if $Q$ is the~quadric $(\daleth)$, then
\begin{equation}
\label{equation:daleth}\tag{$\daleth$}
X=\big\{u_1x=v_1y,u_2z=v_2t,w^2+xy+zt=xt+xz\big\}
\end{equation}

We know all K-polystable smooth Fano threefolds in the~deformation family \textnumero 3.10.
Namely, it has been proved in \cite[\S 5.17]{Book} that $X$ is K-polystable $\iff$ $Q$ is the~quadric~$(\beth)$.

The goal of this paper is prove the~following result:

\begin{maintheorem*}
K-polystable limits of smooth Fano threefolds in the~family \textnumero 3.10 are
complete intersections in $\mathbb{P}^1\times\mathbb{P}^1\times\mathbb{P}^4$ which can be described as follows:
\begin{equation}
\label{equation:K-beth}\tag{$\beth$}
\big\{u_1x=v_1y,u_2z=v_2t,w^2+xy+zt=a(xt+yz)+b(xz+yt)\big\},
\end{equation}
\begin{equation}
\label{equation:V-aleph}\tag{$\aleph$}
\big\{u_1x=v_1y,u_2z=v_2t,w^2+r(x+y)^2+r(z+t)^2=(2s+2)(xt+yz)+(2s-2)(xz+yt)\big\},
\end{equation}
where $(a,b)\in\mathbb{C}^2$ and $[r:s]\in\mathbb{P}^1$.
\end{maintheorem*}

K-polystable Fano threefolds in the Main Theorem form an irreducible two-dimensional component of the K-moduli of smoothable Fano threefolds.
In Section~\ref{section:K-moduli}, 
we show that this component is a~connected component of this moduli space,
and it is isomorphic to~$\mathbb{P}^2$.
Another two-dimensional component of this moduli~space has been described in \cite{Gokova},
and its one-dimensional components are described~in~\cite{London}.

Let us say few words about the threefolds in the Main Theorem.
Let $\mathscr{X}$ be one of them.
Then, using the~natural projection $\mathscr{X}\to\mathbb{P}^4$, we get a~birational morphism $\varpi\colon\mathscr{X}\to\mathscr{Q}$,
where $\mathscr{Q}$ is a~(possibly singular) irreducible quadric in $\mathbb{P}^4$ such that either
\begin{equation}
\label{equation:K-beth-quadric}\tag{$\beth$}
\mathscr{Q}=\big\{w^2+xy+zt=a(xt+yz)+b(xz+yt)\big\}
\end{equation}
or
\begin{equation}
\label{equation:V-aleph-quadric}\tag{$\aleph$}
\mathscr{Q}=\big\{w^2+r(x+y)^2+r(z+t)^2=(2s+2)(xt+yz)+(2s-2)(xz+yt)\big\},
\end{equation}
where $(a,b)\in\mathbb{C}^2$ and $[r:s]\in\mathbb{P}^1$.
The morphism $\varpi$ is a~blow up of two conics:
\begin{align*}
\mathscr{C}_1&=\mathscr{Q}\cap \{x=0,y=0\},\\
\mathscr{C}_2&=\mathscr{Q}\cap\{z=0,t=0\}.
\end{align*}
These conics are contained in the~smooth locus of the~quadric $\mathscr{Q}$, and
\begin{itemize}
\item $\mathscr{C}_1$ and~$\mathscr{C}_2$ are smooth if $\mathscr{Q}$ is the~quadric ($\beth$),
\item $\mathscr{C}_1$ and~$\mathscr{C}_2$ are reduced and singular if $\mathscr{Q}$ is the~quadric ($\aleph$) with $[r:s]\ne [0:1]$,
\item $\mathscr{C}_1$ and~$\mathscr{C}_2$ are non-reduced if $\mathscr{Q}$ is the~quadric ($\aleph$) with  $[r:s]=[0:1]$.
\end{itemize}
If $\mathscr{Q}$ is the~quadric ($\beth$), then $\mathscr{X}$ is singular $\iff$ $\mathscr{Q}$ is singular $\iff$ $a\pm b\pm 1=0$.
In~this case, the~singularities of the~quadric $\mathscr{Q}$ can be described as follows:
\begin{itemize}
\item if $a\pm b\pm 1=0$ and $ab\ne 0$, then $\mathscr{Q}$ has one singular point,
\item if $a\pm b\pm 1=0$ and $ab=0$, then $\mathscr{Q}$ is singular along a~line.
\end{itemize}
Similarly, if $\mathscr{Q}$ is the~quadric~($\aleph$), then $\mathscr{X}$ is singular,
because both $\mathscr{C}_1$ and~$\mathscr{C}_2$ are singular.
In~this case, $\mathscr{Q}$ is smooth $\iff$ $[r:s]\ne[\pm 1:1]$,
and $|\mathrm{Sing}(\mathscr{Q})|=1$ when  $[r:s]=[\pm 1:1]$.

\medskip
\noindent
\textbf{Acknowledgements.}
We started this research project in New York back in October~2022 during the~conference \emph{MMP and Moduli},
and we completed it in Banff in February 2023 during the~workshop \emph{Explicit Moduli Problems in Higher Dimensions}.
We would like~to thank the~Simons Foundation and Banff International Research Station for very good working conditions.
We would like to thank Hamid Abban and Kento Fujita for comments,
and Yuchen Liu for his help with arguments in Section~\ref{section:K-moduli}.

Ivan Cheltsov is supported by EPSRC grant \textnumero EP/V054597/1.

Alan Thompson is supported by EPSRC grant \textnumero EP/V005545/1.

\section{First GIT quotient (bad moduli space)}
\label{section:GIT-1}

In this section we describe a~compact moduli space for Fano threefolds in family \textnumero 3.10,
by applying GIT techniques to the~description of such threefolds as blow-ups of quadrics along two smooth disjoint conics.
As in Section~\ref{section:intro}, we fix two smooth disjoint conics
\begin{align*}
C_1&=\{x=0,y=0,w^2+zt=0\}\subset\mathbb{P}^4,\\
C_2&=\{z=0,t=0,w^2+xy=0\}\subset\mathbb{P}^4,
\end{align*}
where $[x:y:z:t:w]\in\mathbb{P}^4$.
Let $Q$ be a~quadric threefold in $\mathbb{P}^4$ that contains $C_1$ and $C_2$.
Then $Q$ is given by an equation of the~form
\begin{equation}
\label{equation:quadric}
\alpha(xy+zt+w^2)+\beta xz+\gamma xt+\delta yz+\epsilon yt=0,
\end{equation}
where $[\alpha:\beta:\gamma:\delta:\epsilon] \in \mathbb{P}^4$.
Let $\pi\colon X\to Q$ be the~blow up of of the~conics $C_1$~and~$C_2$.
If $Q$ is smooth, then~$X$ is a~smooth Fano~threefold in the~deformation family \textnumero 3.10.

Note that equations of the~form \eqref{equation:quadric} are preserved by the~action of $(\mathbb{C}^\ast)^2$ given by
$$
[x:y:z:t:w]\mapsto\Big[\lambda x: \tfrac{1}{\lambda}y: \mu z: \tfrac{1}{\mu} t: w\Big]
$$
for $(\lambda,\mu)\in(\mathbb{C}^\ast)^2$. This induces a~$(\mathbb{C}^\ast)^2$-action on the~parameter space $\mathbb{P}^4$ given by
$$
[\alpha:\beta:\gamma:\delta:\epsilon] \mapsto \Big[\alpha:\lambda\mu \beta: \tfrac{\lambda}{\mu} \gamma: \tfrac{\mu}{\lambda} \delta: \tfrac{1}{\lambda\mu} \epsilon\Big]
$$
for $(\lambda,\mu)\in(\mathbb{C}^\ast)^2$.
The GIT quotient by this action is isomorphic to $\mathbb{P}^2$
with coordinates
$$
[\mathbf{x}_0:\mathbf{x}_1:\mathbf{x}_2] =[\alpha^2: \beta\epsilon: \gamma\delta].
$$
We now classify the~unstable, semistable, and stable points, and corresponding quadrics.

\subsection{Unstable points.}
The unstable points $[\alpha:\beta:\gamma:\delta:\epsilon] \in \mathbb{P}^4$ and the~corresponding quadric threefolds can be described as follows:
\begin{align*}
\alpha = \beta =\gamma = 0&\ \text{and}\ Q=\{y(\delta z+\epsilon t)=0\},\\
\alpha = \beta = \delta = 0&\ \text{and}\ Q=\{t(\gamma x+\epsilon y)=0\}, \\
\alpha = \gamma = \epsilon = 0&\ \text{and}\ Q=\{z(\beta x+\delta y)=0\}, \\
\alpha = \delta = \epsilon = 0&\ \text{and}\ Q=\{x( \beta z+\gamma t)=0\}.
\end{align*}
In these cases, the~quadric $Q$ degenerates to a~union of two hyperplanes.

\subsection{Stable points.}
A point $[\alpha:\beta:\gamma:\delta:\epsilon] \in \mathbb{P}^4$ is stable if and only if its orbit is closed and its stabiliser is finite.
This occurs if and only if all of $\beta$, $\gamma$, $\delta$ and $\epsilon$~are~nonzero,
\mbox{corresponding} to points in the~GIT quotient $\mathbb{P}^2$  away from the~lines $\{\mathbf{x}_1=0\}$ and $\{\mathbf{x}_2=0\}$.
We separate the~corresponding quadric threefolds into three cases.
\begin{enumerate}[(S1)]
\item Stable orbits with $\alpha \neq 0$ correspond to points in $\mathbb{P}^2$ with all of $\mathbf{x}_0, \mathbf{x}_1, \mathbf{x}_2$ nonzero.
Taking the~affine chart $\mathbf{x}_0 = 1$, each such orbit contains quadrics of the~form
$$
w^2+xy+zt+a(xt+yz)+b(xz+yt)=0,
$$
for $(a,b)\in(\mathbb{C}^*)^2$ given by $a = \pm \sqrt{\mathbf{x}_2}$ and $b = \pm \sqrt{\mathbf{x}_1}$.
The~quadric is singular if and only if $a \pm b \pm 1 = 0$,
which corresponds to the~affine curve
$$
1-2\mathbf{x}_1-2\mathbf{x}_2+\mathbf{x}_1^2-2\mathbf{x}_1\mathbf{x}_2+\mathbf{x}_2^2=0.
$$

\item Orbits with $\alpha=0$ and $\gamma \delta \neq \beta \epsilon$ correspond to points in the~quotient $\mathbb{P}^2$
that satisfy the~following conditions: $\mathbf{x}_0 = 0$ and $\mathbf{x}_1 \neq \mathbf{x}_2$. So, taking the~affine chart $\mathbf{x}_1=1$,
we see that each such orbit contains a~quadric of the~form
$$
a(xt+yz)+xz+yt= 0,
$$
for $a \in \mathbb{C}^* \setminus \{\pm 1\}$ given by $a = \pm\sqrt{\mathbf{x}_2}$.
Such quadrics have one singular point.

\item There is a~unique stable orbit with $\alpha=0$ and $\gamma \delta = \beta \epsilon \neq 0$, corresponding to the~point $[\mathbf{x}_0:\mathbf{x}_1:\mathbf{x}_2] = [0:1:1]$ in the~quotient. It contains the~threefold
$$
(x+y)(z+t)=0,
$$
which is a~union of two hyperplanes.
\end{enumerate}

\subsection{Strictly semistable points.}
A point $[\alpha:\beta:\gamma:\delta:\epsilon]$ is strictly semistable if and only if
it is not unstable, and at least one of $\beta,\gamma,\delta,\epsilon$ is zero.
Such points have non-closed orbits or infinite stabilisers.

\begin{example}
If $\beta\ne 0$, $\gamma\ne 0$ and $\delta\ne 0$, then the~orbit of $[\alpha:\beta:\gamma:\delta:0]$ contains
the~point $[\alpha:0:\gamma:\delta:0]$ in its closure, and the~orbit of $[\alpha:0:\gamma:\delta:0]$ is closed but has infinite stabiliser given by $(\lambda,\lambda)\in (\mathbb{C}^\ast)^2$.
\end{example}

The strictly semistable points belonging to minimal orbits (strictly polystable points) are described below, along with their corresponding quadric threefolds.
\begin{enumerate}[(SS1)]
\item All points of the~form $[\alpha:0:\gamma:\delta:0]$ or $[\alpha:\beta:0:0:\epsilon]$,
where $\alpha\beta\gamma\delta\epsilon\ne 0$, and either $\alpha^2 \neq \gamma\delta$ or $\alpha^2 \neq \beta\epsilon$, respectively.
Such orbits lie over points in the~quotient
\begin{align*}
\Big\{[\mathbf{x}_0:0:\mathbf{x}_2]\ \big|\ \mathbf{x}_0, \mathbf{x}_2 \in \mathbb{C}^*,\ \mathbf{x}_0 \neq \mathbf{x}_2\Big\},\\
\Big\{[\mathbf{x}_0:\mathbf{x}_1:0]\ \big|\ \mathbf{x}_0, \mathbf{x}_1 \in \mathbb{C}^*,\ \mathbf{x}_0 \neq \mathbf{x}_1\Big\}.
\end{align*}
The corresponding quadrics are
\begin{align*}
\big\{w^2+xy+zt+a(xt+yz)&=0\big\},\\
\big\{w^2+xy+zt+b(xz+yt)&=0\big\},
\end{align*}
where $a,b \in \mathbb{C}^* \setminus \{\pm 1\}$. They are smooth.

\item The orbit of the~point $[1:0:0:0:0]$.
It gives the~point $[1:0:0]$ in the~quotient.
The corresponding quadric threefold is $\{w^2+xy+zt=0\}$ --- it is smooth.

\item Two orbits of all points $[0:0:\gamma:\delta:0]$ and $[0:\beta:0:0:\epsilon]$ such that $\beta\gamma\delta\epsilon\ne 0$,
which lie over the~points $[0:0:1]$ and $[0:1:0]$ in the~quotient $\mathbb{P}^2$, respectively.
This gives two singular quadrics: $\{xt+yz=0\}$ and $\{xz+yt=0\}$.

\item Two orbits consisting of points of the~form $[\alpha:0:\gamma:\delta:0]$ or $[\alpha:\beta:0:0:\epsilon]$,
where $\alpha\beta\gamma\delta\epsilon\ne 0$, and either $\alpha^2 = \gamma\delta$ or $\alpha^2=\beta\epsilon$, respectively.
These orbits lie over the~points $[1:0:1]$ and $[1:1:0]$ in the~quotient,
and the~quadrics are
\begin{align*}
w^2+xy+zt+xt+yz&=0,\\
w^2+xy+zt+xz+yt&=0.
\end{align*}
Both of them are irreducible and singular along the~line $\{w=y-t=x-z=0\}$.
\end{enumerate}

\subsection{Quotient space}
Note that the~space of quadrics of the~form \eqref{equation:quadric}
admits a~discrete action of the~group $\mumu_2^3$ given by
\begin{align*}
\sigma_1\colon [x:y:z:t:w] &\mapsto [y:x:z:t:w],\\
\sigma_2\colon [x:y:z:t:w] &\mapsto [z:t:x:y:w],\\
\sigma_3\colon [x:y:z:t:w] &\mapsto [x:y:z:t:-w].
\end{align*}
These involutions act on the~parameter space $\mathbb{P}^4$ by
\begin{align*}
\sigma_1\colon [\alpha:\beta:\gamma:\delta:\epsilon]&\mapsto [\alpha:\delta:\epsilon:\beta:\gamma],\\
\sigma_2\colon [\alpha:\beta:\gamma:\delta:\epsilon]&\mapsto [\alpha:\beta:\delta:\gamma:\epsilon],\\
\sigma_3\colon [\alpha:\beta:\gamma:\delta:\epsilon]&\mapsto [\alpha:\beta:\gamma:\delta:\epsilon].
\end{align*}
The involutions $\sigma_2$ and $\sigma_3$ act trivially on the~GIT quotient, whilst $\sigma_1$ acts by
$$
\sigma_1\colon [\mathbf{x}_0:\mathbf{x}_1:\mathbf{x}_2] \mapsto [\mathbf{x}_0:\mathbf{x}_2:\mathbf{x}_1].
$$
Therefore, to get a~moduli space from our GIT quotient,
we must further quotient it by the~involution $\sigma_1$ to get a~copy of the~weighted projective space $\mathbb{P}(1,1,2)$ with coordinates
$$
[\xi:\eta:\zeta]=\big[\mathbf{x}_0:\mathbf{x}_1+\mathbf{x}_2:(\mathbf{x}_1 - \mathbf{x}_2)^2\big]=\big[\alpha^2:\beta\epsilon+\gamma\delta:(\beta\epsilon-\gamma\delta)^2\big].
$$
Over this quotient it is easy to describe where the~stable and semistable threefolds occur.
Namely, the~stable threefolds occur where $\eta^2 \neq \zeta$ as follows:
\begin{enumerate}[(S1)]
\item the~locus with $\xi \neq 0$ and $\eta^2 \neq \zeta$, and singular quadrics lie over $\xi^2-\xi\eta+\zeta=0$,
\item the~locus $\{[0:\eta:\zeta]\mid \eta^2 \neq \zeta \text{ and } \zeta \neq 0\}$,
\item the~point $[\xi:\eta:\zeta] = [0:1:0]$.
\end{enumerate}
Likewise, the~strictly semistable quadrics occur along the~curve $\eta^2 = \zeta$ as follows:
\begin{enumerate}[(SS1)]
\item the~locus where $\eta^2 = \zeta$ and $[\xi:\eta:\zeta] \notin \{[1:0:0],  [0:1:1], [1:1:1]\}$,
\item the~point $[\xi:\eta:\zeta] =[1:0:0]$,
\item the~point $[\xi:\eta:\zeta] = [0:1:1]$,
\item the~point $[\xi:\eta:\zeta] = [1:1:1]$.
\end{enumerate}

Unfortunately, the~constructed moduli space is not the~moduli space of K-polystable threefolds we are looking for,
because the~threefold $X$ obtained by blowing up the~quadric in the~class (S3) is reducible.
Moreover, all threefolds obtained by blowing up quadrics in the~class (S2) are isomorphic,
and the~quadrics in the~class $\mathrm{(S2)}$ give K-unstable threefolds:

\begin{lemma}
\label{lemma:K-unstable}
Let $Q$ be the quadric $\{a(xt+yz)+xz+yt=0\}\subset\mathbb{P}^4$ for $a\in\mathbb{C}\setminus\{0,\pm 1\}$,
and let $\pi\colon X\to Q$ be the~blow up of the~quadric $Q$ along $C_1$ and $C_2$.
Then $X$ is K-unstable.
\end{lemma}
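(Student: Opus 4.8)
The plan is to exhibit an explicit destabilising test configuration for $X$, or equivalently to find a divisor over $X$ (or a torus action) whose $\beta$-invariant is non-positive. The quadric $Q=\{a(xt+yz)+xz+yt=0\}$ has a single ordinary double point, located at $[0:0:0:0:1]$ (the unique point where all partial derivatives vanish), and this point does not lie on either conic $C_1$ or $C_2$; hence the blow-up $\pi\colon X\to Q$ has exactly one singular point $P$, an ordinary double point lying over the vertex of $Q$. The natural candidate destabilisation is the $(\mathbb{C}^\ast)^2$-action on $\mathbb{P}^4$ introduced in this section, which preserves $Q$, $C_1$ and $C_2$, and hence lifts to $X$; I would look for a one-parameter subgroup $\lambda\colon\mathbb{C}^\ast\to(\mathbb{C}^\ast)^2$ and compute the corresponding Donaldson--Futaki invariant (equivalently, the derivative of the normalised volume / the Futaki character) directly from the weights on $H^0(X,-mK_X)$.

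\smallskip

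First I would set up coordinates on $X$ as the complete intersection description from Section~\ref{section:intro}, or more concretely work on $Q$ itself and track the exceptional divisors $E_1,E_2$ of $\pi$ over $C_1,C_2$, recording that $-K_X=\pi^\ast(-K_Q)-E_1-E_2=\pi^\ast\mathcal{O}_Q(3)-E_1-E_2$. Next I would diagonalise the quadratic form: the substitution making the symmetry manifest is to pass to coordinates in which the form $a(xt+yz)+xz+yt$ becomes a sum/difference of squares, after which the torus action and the two conics are expressed in the new coordinates. Then, for a chosen cocharacter, I would either (a) compute the Futaki invariant as a weighted sum over a basis of sections of $-K_X$, using equivariant Riemann--Roch / localisation on $Q$ and its two blown-up conics, or (b) realise the degeneration of $X$ under $\lambda$ as $t\to 0$ and identify the central fibre $X_0$ explicitly — it should degenerate to a reducible or badly singular threefold — and compute $\mathrm{DF}(X,\lambda)$ from the central fibre, showing it is $\le 0$ with equality excluded because $X$ is not isomorphic to $X_0$ (a special degeneration with non-trivial central fibre already forces K-instability once $\mathrm{DF}\le 0$).

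\smallskip

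Alternatively — and this is probably the cleanest route — I would use the valuative criterion of Fujita and Li: it suffices to produce one prime divisor $F$ over $X$ with $\beta(F)=A_X(F)\,(-K_X)^3 - \int_0^\infty \mathrm{vol}(-K_X-tF)\,dt\le 0$. The exceptional divisor of the blow-up of the singular point $P\in X$, or of one of the surfaces $E_i$, or the proper transform of a hyperplane section of $Q$ through the vertex, are the natural candidates. Because the vertex of $Q$ is a distinguished point forced by the special shape of the equation (it is the "extra" singularity created in case (S2) but absent in case (S1)), I expect the proper transform $S$ on $X$ of the hyperplane $\{w=0\}\cap Q$, or the divisor obtained by blowing up $P$, to have $\beta\le 0$; I would compute $A_X(\cdot)$, intersect on $X$ to get $(-K_X)^3$, and evaluate the Zariski-decomposition integral $\int_0^\tau \mathrm{vol}(-K_X-tF)dt$ using the explicit geometry of $X\to Q$ and the negativity of the relevant divisor classes.

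\smallskip

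The main obstacle will be the volume computation: controlling $\mathrm{vol}(-K_X-tF)$ over the whole range $t\in[0,\tau]$ requires running (at least implicitly) a relative MMP or a Zariski-decomposition analysis on the threefold $X$, keeping track of how the pseudo-effective threshold and the Zariski chambers interact with the exceptional divisors $E_1$, $E_2$ and with the proper transform of the ruling/hyperplane class coming from the vertex of $Q$. The bookkeeping is delicate because $X$ has a node and the cone of divisors is three-dimensional; I would organise it by first pulling everything back to a common resolution, computing all triple intersection numbers there, and only then assembling $\beta(F)$. Once the relevant Zariski decomposition is in hand, the inequality $\beta(F)\le 0$ should be a short explicit check, and strictness (hence K-\emph{in}stability rather than just non-stability) will follow from the fact that the destabilising configuration has central fibre not isomorphic to $X$, or from a strict inequality in the $\beta$-computation.
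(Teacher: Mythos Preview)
Your preferred route---the valuative criterion of Fujita--Li applied to the exceptional divisor $F$ of the blow-up of the ordinary double point $P\in X$---is exactly what the paper does; the computation gives $\beta(F)=-\tfrac{3}{52}<0$, so $X$ is strictly K-unstable.

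A couple of remarks to sharpen your plan. First, among your alternative candidate divisors, the proper transform of the hyperplane $\{w=0\}\cap Q$ is just an ordinary member of $|H|$ (the equation of $Q$ does not involve $w$, so this cut is not distinguished), and one checks $\beta(H)>0$; likewise $\beta(E_i)>0$. Only the exceptional divisor over the node actually destabilises. Second, the key geometric input that makes the Zariski decomposition of $\varphi^*(-K_X)-uF$ tractable---and which you did not mention---is that in this (S2) case the \emph{planes} $\{x=y=0\}$ and $\{z=t=0\}$ spanned by $C_1$ and $C_2$ lie entirely in $Q$ (this fails in the generic (S1) case). Their proper transforms $S_1,S_2$ on the resolution supply the negative part of the decomposition for $u\in(1,3)$, and the associated $\mathbb{P}^1$-bundle structure $\widetilde{Q}\to\mathbb{P}^1\times\mathbb{P}^1$ makes all the triple intersection numbers explicit. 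Once you see this, the volume integral is a short computation.
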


\begin{proof}
Observe that both $C_1$ and $C_2$ do not contain the~singular point of the~quadric~$Q$,
the~quadric threefold $Q$ is a~cone over $\mathbb{P}^1\times\mathbb{P}^1$ with vertex at the~point $[0:0:0:0:1]$,
and there exists the~following commutative diagram:
$$
\xymatrix{
&\widetilde{X}\ar@{->}[dl]_{\varphi}\ar@{->}[dr]^{\varpi}\ar@/^2.5pc/@{->}[ddrr]^{\nu}&\\%
X\ar@{->}[dr]_{\pi}&&\widetilde{Q}\ar@{->}[dl]^{\phi}\ar@{->}[dr]^{\upsilon}&\\%
&{Q}&&\mathbb{P}^1\times\mathbb{P}^1}
$$
where $\phi$ and $\varphi$ are~blow ups of the~singular points $\mathrm{Sing}(Q)$ and $\mathrm{Sing}(X)$,
$\upsilon$ is a~$\mathbb{P}^1$-bundle,
$\varpi$ is the~blow up of the~preimages of the~conics $C_1$ and $C_2$,
and $\nu$ is a~conic bundle.

Let $E_1$ and $E_2$ be proper transforms on the~threefold $\widetilde{X}$ of the~$\pi$-exceptional surfaces such that $\pi\circ\varphi(E_1)=C_1$ and $\pi\circ\varphi(E_2)=C_2$.
Then $\upsilon\circ\varpi(E_1)$ and $\upsilon\circ\varpi(E_2)$ are~disjoint rulings of the~surface $\mathbb{P}^1\times\mathbb{P}^1$,
because planes spanned by $C_1$ and $C_2$ are contained in $Q$.
Therefore, we may assume that both $\upsilon\circ\varpi(E_1)$ and $\upsilon\circ\varpi(E_2)$ are divisors of degree $(1,0)$.
Let $H_1$ and $H_2$ be the~pull back on $\widetilde{X}$ of the~divisors on $\mathbb{P}^1\times\mathbb{P}^1$ of degree $(1,0)$ and $(0,1)$,
let $S_1$ and $S_2$ be proper transforms on $\widetilde{X}$ of the~planes in $Q$ that contain $C_1$ and $C_2$,
and let $F$ be the~$\varphi$-exceptional surface. Then
$H_1\sim S_1+E_1$ and $H_1\sim S_2+E_2$, so that
$$
\varphi^*(-K_X)\sim 3(H_1+H_2)+3F-E_1-E_2\sim_{\mathbb{Q}} \frac{3}{2}(S_1+S_2)+H_2+3F+\frac{1}{2}(E_1+E_2).
$$
Now, take $u\in\mathbb{R}_{\geqslant 0}$. Then
$$
\varphi^*(-K_X)-uF\sim_{\mathbb{R}} 3(H_1+H_2)+(3-u)F-E_1-E_2\sim_{\mathbb{R}} \frac{3}{2}(S_1+S_2)+H_2+(3-u)F+\frac{1}{2}(E_1+E_2),
$$
so this divisor is pseudoeffective $\iff$ $u\leqslant 3$.
Moreover, this divisor is nef $\iff$ $u\leqslant 1$.
Furthermore, if $u\in(1,3)$, then its Zariski decomposition can be described as follows:
$$
\varphi^*(-K_X)-uF\sim_{\mathbb{R}}\underbrace{\varphi^*(-K_X)-uF-\frac{u-1}{2}\big(S_1+S_2\big)}_{\text{positive part}}+\underbrace{\frac{u-1}{2}\big(S_1+S_2\big)}_{\text{negative part}}.
$$
Thus, we see that
\begin{multline*}
\quad\quad\quad\beta(F)=2-\frac{1}{26}\int\limits_{0}^{1}\big(3(H_1+H_2)+(3-u)F-E_1-E_2\big)^3du-\\
-\frac{1}{26}\int\limits_{2}^{3}\Big(3(H_1+H_2)+(3-u)F-E_1-E_2-\frac{u-1}{2}\big(2H_1-E_1-E_2\big)\Big)^3du.
\end{multline*}
To compute these integrals, observe that
\begin{align*}
H_1^2&=0, & H_2^2&=0, & E_1\cdot E_2&=0, & E_1\cdot F&=0, & E_2\cdot F&=0,\\
F^3&=2, & H_1\cdot F^2&=-1, & H_2\cdot F^2&=-1, & E_1^3&=-4, & E_2^3&=-4, \\
F\cdot H_1\cdot H_2&=1, & H_1\cdot E_1&=0, & H_1\cdot E_2&=0, & H_2\cdot E_1^2&=-2, & H_2\cdot E_2^2&=-2.
\end{align*}
This gives $\beta(F)=-\frac{3}{52}<0$, which implies that $X$ is K-unstable \cite{Fujita2019,Li}.
\end{proof}

\section{Second GIT quotient (good moduli space)}
\label{section:GIT-2}

In this section, we construct another compact GIT moduli space for Fano threefolds in the~deformation family~\textnumero~3.10.
Recall that all K-polystable smooth Fano threefolds in this family are contained in class $(\beth)$,
and the~quadric $Q$ in this case is invariant under the~action of $\mumu_2^3$ generated by the~following three involutions:
\begin{align*}
\tau_1\colon [x:y:z:t:w] &\mapsto [y:x:t:z:w],\\
\tau_2\colon [x:y:z:t:w] &\mapsto [z:t:x:y:w],\\
\tau_3\colon [x:y:z:t:w] &\mapsto [x:y:z:t:-w].
\end{align*}
In light of this, we take $Q$ to be a~quadric invariant under this $\mumu_2^3$-action:
\begin{equation}
\label{equation:newquadric}
\alpha w^2 + \tfrac{\beta}{2}(x^2+y^2+z^2+t^2) + \gamma(xt+yz) + \delta(xz+yt) + \epsilon(xy+zt) = 0,
\end{equation}
where $[\alpha:\beta:\gamma:\delta:\epsilon] \in\mathbb{P}^4$. As before, we take $C_1$ and $C_2$ to be the~two conics given by the~intersection with the~ planes $\{x=y=0\}$ and $\{z=t=0\}$, namely
\begin{align*}
C_1 &= \{x=0,\, y=0,\, \alpha w^2 + \tfrac{\beta}{2}(z^2+t^2) + \epsilon zt = 0\},\\
C_2 &= \{z=0,\, t=0,\, \alpha w^2 + \tfrac{\beta}{2}(x^2+y^2) + \epsilon xy = 0\}.
\end{align*}
Unlike the~construction from Section \ref{section:GIT-1}, this general form allows $C_1$ and $C_2$ to degenerate.

A simple computer calculation shows that equations of the~form \eqref{equation:newquadric} are preserved by a~subgroup of $\mathrm{PGL}(5,\mathbb{C})$ isomorphic to $(\mathbb{C}^\ast)^2$, given explicitly by matrices of the~form
$$\begin{pmatrix}
\lambda + \mu & \lambda - \mu & 0 & 0 & 0 \\
\lambda - \mu & \lambda + \mu & 0 & 0 & 0 \\
0 & 0 & \lambda - \mu & \lambda + \mu & 0 \\
0 & 0 & \lambda + \mu & \lambda - \mu & 0 \\
0 & 0 & 0 & 0 & -\tfrac{2}{\lambda^2\mu^2}
\end{pmatrix}$$
acting on the~coordinates $[x:y:z:t:w]$, where $(\lambda,\mu) \in (\mathbb{C}^\ast)^2$ and we have chosen our matrix representative to have determinant $2^5 = 32$ (to avoid having lots of factors of $\frac{1}{2}$). This induces a~$(\mathbb{C}^\ast)^2$-action on the~parameter space $\mathbb{P}^4$ given by
$$\begin{pmatrix}
\tfrac{4}{\lambda^4\mu^4} & 0 & 0 & 0 \\
0 & 2(\lambda^2+ \mu^2) & 0 & 0 & 2(\lambda^2-\mu^2)\\
0 & 0 & 2(\lambda^2-\mu^2) & 2(\lambda^2 + \mu^2) & 0 \\
0 & 0 & 2(\lambda^2+\mu^2) & 2(\lambda^2 - \mu^2) & 0 \\
0 & 2(\lambda^2- \mu^2) & 0 & 0 & 2(\lambda^2+\mu^2)
\end{pmatrix}
$$
acting on the~coordinates $[\alpha:\beta:\gamma:\delta:\epsilon]$, where $(\lambda,\mu) \in (\mathbb{C}^\ast)^2$.

The ring of invariants for this action is generated by polynomials of the~form
$$
\alpha f_1f_2g_1g_2
$$
with $f_i \in \{(\beta+\epsilon),\,(\gamma+\delta)\}$ and $g_i \in \{(\beta-\epsilon),\,(\gamma-\delta)\}$.
There are nine such \mbox{polynomials}, corresponding to coordinates on $\mathbb{P}^8$.
One can show that the~GIT quotient of the~parameter space is isomorphic to $\mathbb{P}^1 \times \mathbb{P}^1 \subset \mathbb{P}^8$, embedded by the~Segre embedding of bidegree $(2,2)$.
The~isomorphism with $\mathbb{P}^1 \times \mathbb{P}^1$ is given explicitly by
$$
\big([\mathbf{x}_0:\mathbf{x}_1],[\mathbf{y}_0:\mathbf{y}_1]\big) = \big([\beta+\epsilon:\gamma + \delta], [\beta - \epsilon:\gamma - \delta]\big).
$$

We now classify the~unstable and stable points, and corresponding quadrics. There are no strictly semistable points under this action.

\subsection{Unstable points.}
The unstable points $[\alpha:\beta:\gamma:\delta:\epsilon] \in \mathbb{P}^4$ come in two types. The first are the~points with $\alpha = 0$, which have corresponding quadrics
$$
Q=\Big\{\tfrac{\beta}{2}(x^2+y^2+z^2+t^2) + \gamma(xt+yz) + \delta(xz+yt) + \epsilon(xy+tz) = 0\Big\}.
$$
In this case, the~quadric $Q$ degenerates to a~cone over a~quadric surface. The second type are those with either
\begin{align*}
\beta - \epsilon = \gamma -  \delta = 0&\ \text{and}\ Q=\Big\{\alpha w^2 + \tfrac{\beta}{2}((x+y)^2+(z+t)^2) + \gamma(x+y)(z+t)=0\Big\}, \\
\beta + \epsilon = \gamma +  \delta = 0&\ \text{and}\ Q=\Big\{\alpha w^2 + \tfrac{\beta}{2}((x-y)^2+(z-t)^2) + \gamma(x-y)(z-t) =0\Big\}.
\end{align*}
In these cases $Q$ is singular along a~line and both of the~conics $C_1$ and $C_2$ become reducible.

\subsection{Stable points.}
The stable points are those that are not unstable --- there are no strictly semistable points.
We separate the~ corresponding quadric threefolds into 2 cases.

\begin{enumerate}[(S1)]
\item (Class $(\beth)$) Stable orbits with $\alpha \neq 0$ and $\beta \neq \pm \epsilon$ correspond to points with
$$
\mathbf{x}_0\mathbf{y}_0 \neq 0
$$
in the~quotient space $\mathbb{P}^1 \times \mathbb{P}^1$.
Taking the~affine chart with $\mathbf{x}_0=1$ and $\mathbf{y}_0=-1$, we see that each such orbit contains quadrics of the~form
$$
w^2+xy+zt+a(xt+yz)+b(xz+yt)=0,
$$
for $a,b\in \mathbb{C}$ given by $a = \frac{1}{2}(\mathbf{x}_1 + \mathbf{y}_1)$ and $b = \frac{1}{2}(\mathbf{x}_1 - \mathbf{y}_1)$.
This~quadric is singular if and only if $a \pm b \pm 1 = 0$,
corresponding to $\mathbf{x}_1 = \pm 1$ or $\mathbf{y}_1 = \pm 1$.

\item (Class $(\aleph)$) Stable orbits with $\alpha\neq 0$ and $\beta = -\epsilon$ or $\beta = \epsilon$,
which correspond to points in the~quotient $\mathbb{P}^1 \times \mathbb{P}^1$ with $\mathbf{x}_0 = 0$ or $\mathbf{y}_0 = 0$, respectively.
Taking~the~affine chart $\mathbf{x}_1=1$ or $\mathbf{y}_1=1$, respectively,
we see that such orbits contain the~quadric
$$
w^2 + r(x-y)^2+r(z-t)^2 + (2s+2)(xt+yz)+(2-2s)(xz+yt)= 0
$$
or
$$
w^2 + r(x+y)^2+r(z+t)^2 + (2s+2)(xt+yz)+(2s-2)(xz+yt)= 0
$$
for $[r:s] \in \mathbb{P}^1$ given by $[r:s] = [2\mathbf{y}_0:2\mathbf{y}_1]$ or $[r:s] = [2\mathbf{x}_0:2\mathbf{x}_1]$, respectively.
Generically this defines a~smooth quadric where the~conics $C_1$ and $C_2$ are reduced and singular (pairs of lines).
The exceptions are
\begin{itemize}
\item $[r:s] = [\pm 1 :1]$, when the~quadric acquires an isolated singularity,
\item $[r:s] = [0:1]$, when $C_1$ and $C_2$ become non-reduced (double lines).
\end{itemize}
\end{enumerate}

In Sections \ref{section:beth} and \ref{section:aleph}, we will show that all Fano threefolds $X$ obtained as blow-ups of quadrics $Q$ from classes $(\beth)$ and $(\aleph)$ above are K-polystable.

\subsection{Quotient space.}
\label{section:quotient-space}
Observe that the~space of quadric threefolds of the~form \eqref{equation:newquadric}
admits an~additional $\mumu_2$-action given by
$$
[x:y:z:t:w]\mapsto [x:-y:-z:t:w],
$$
which acts on the~parameter space $\mathbb{P}^4$ as
$$
[\alpha:\beta:\gamma:\delta:\epsilon]\mapsto [\alpha:\beta:\gamma:-\delta:-\epsilon].
$$
This involution induces an involution on the~GIT quotient given by
$$
\big([\mathbf{x}_0:\mathbf{x}_1],[\mathbf{y}_0:\mathbf{y}_1]\big) \mapsto \big([\mathbf{y}_0:\mathbf{y}_1],[\mathbf{x}_0:\mathbf{x}_1]\big).
$$
So, to get a~moduli space from our GIT quotient,
we must further quotient it by this involution. The result is a~copy of $\mathbb{P}^2$ with coordinates
$$
[\xi:\eta:\zeta]=[\mathbf{x}_0\mathbf{y}_0:\mathbf{x}_1\mathbf{y}_1:\mathbf{x}_0\mathbf{y}_1+\mathbf{x}_1\mathbf{y}_0] = [\beta^2-\epsilon^2:\gamma^2-\delta^2: 2\beta\gamma-2\epsilon\delta].
$$
Over this quotient it is easy to describe where each of the~types of stable threefolds occur:
\begin{enumerate}[(S1)]
\item the~locus with $\xi \neq 0$, singular quadrics lie over the~lines $\xi+\eta = \pm \zeta$.
\item the~locus with $\xi=0$, singular quadrics lie over the~points $[0:1:1]$ and $[0:-1:1]$,
and the~conics $C_1$ and $C_2$ become non-reduced at the~point $[0:1:0]$.
\end{enumerate}
Note that this quotient identifies the two normal forms in class $(\aleph)$.

\section{Threefolds in the~class $(\beth)$}
\label{section:beth}

Let $X$ be the~complete intersection
$$
\big\{u_1x=v_1y, u_2z=v_2t, w^2+xy+zt=a(xt+yz)+b(xz+yt)\big\}\subset\mathbb{P}^1\times\mathbb{P}^1\times\mathbb{P}^4,
$$
where $(a,b)\in\mathbb{C}^2$, and $([u_1:v_1],[u_2:v_2],[x:y:z:t:w])$ are coordinates on $\mathbb{P}^1\times\mathbb{P}^1\times\mathbb{P}^4$.
Then $X$ is singular $\iff$ $a\pm b\pm 1=0$. If $a\pm b\pm 1=0$ and $ab\ne 0$, then
$$
\mathrm{Sing}(X)=\left\{\aligned
&\big([1:1],[1:1],[1:1:-1:-1:0]\big)\ \text{if $a+b+1=0$}, \\
&\big([1:1],[1:1],[1:1:1:1:0]\big)\ \text{if $a+b-1=0$}, \\
&\big([-1:1],[-1:1],[1:-1:1:-1:0]\big)\ \text{if $a-b+1=0$}, \\
&\big([-1:1],[-1:1],[1:-1:-1:1:0]\big)\ \text{if $a-b-1=0$}.
\endaligned
\right.
$$
If $a\pm b\pm 1=0$ and $ab=0$, then $X$ is singular along the~curve
$$
\big\{w=0, x=z, y=t, u_1z=v_1t, u_2z=v_2t, v_1u_2=u_1v_2\big\}.
$$

Let $Q$ be the~quadric threefold in $\mathbb{P}^4$ given by $w^2+xy+zt=a(xt+yz)+b(xz+yt)$,
and let $\pi\colon{X}\to{Q}$ be the~morphism that is induced by the~natural projection ${X}\to\mathbb{P}^4$.
Then $\pi$ is a~blow up of the~following two smooth conics:
\begin{align*}
{C}_1&=\{w^2+zt=0,x=0,y=0\},\\
{C}_2&=\{w^2+xy=0,z=0,t=0\}.
\end{align*}
The quadric $Q$ is smooth $\iff$  $a\pm b\pm 1=0$.
If $a\pm b\pm 1=0$ and $ab\ne 0$, then
$$
\mathrm{Sing}(Q)=\left\{\aligned
&[1:1:-1:-1:0]\ \text{if $a+b+1=0$}, \\
&[1:1:1:1:0]\ \text{if $a+b-1=0$}, \\
&[1:-1:1:-1:0]\ \text{if $a-b+1=0$}, \\
&[1:-1:-1:1:0]\ \text{if $a-b-1=0$}.
\endaligned
\right.
$$
If $a\pm b\pm 1=0$ and $ab\ne 0$, then $Q$ is singular along the~line $\{w=0, x=z, y=t\}$.

The threefold $X$ admits an action of the~group $\mumu_2^3$ given by
\begin{align*}
\tau_1\colon \big([u_1:v_1],[u_2:v_2],[x:y:z:t:w]\big) &\mapsto \big([v_1:u_1],[v_2:u_2],[y:x:t:z:w]\big),\\
\tau_2\colon \big([u_1:v_1],[u_2:v_2],[x:y:z:t:w]\big) &\mapsto \big([u_2:v_2],[u_1:v_1],[z:t:x:y:w]\big),\\
\tau_3\colon \big([u_1:v_1],[u_2:v_2],[x:y:z:t:w]\big) &\mapsto \big([u_1:v_1],[u_2:v_2],[x:y:z:t:-w]\big).
\end{align*}
Let $G$ be the~subgroup in $\mathrm{Aut}(X)$ generated by the~involutions $\tau_1$, $\tau_2$, $\tau_3$.
Then
$$
G\cong\mumu_2^3,
$$
the~blow up $\pi\colon X\to Q$ is $G$-equivariant,
and the~$G$-action on $Q$ induces a~$G$-action on the~projective space $\mathbb{P}^4$.
Therefore, we can also consider $G$ as a~subgroup in $\mathrm{PGL}_4(\mathbb{C})$,
where $\tau_1$, $\tau_2$, $\tau_3$ act on  $\mathbb{P}^4$ as in Section~\ref{section:GIT-2}:
\begin{align*}
\tau_1\colon [x:y:z:t:w] &\mapsto [y:x:t:z:w],\\
\tau_2\colon [x:y:z:t:w] &\mapsto [z:t:x:y:w],\\
\tau_3\colon [x:y:z:t:w] &\mapsto [x:y:z:t:-w].
\end{align*}
Observe that the~only $G$-fixed points in  $\mathbb{P}^4$ are
$$
[1:1:-1:-1:0],[1:1:1:1:0],[1:-1:1:-1:0],[1:-1:-1:1:0].
$$

\begin{lemma}
\label{lemma:G-action}
The following two assertions hold:
\begin{itemize}
\item[$(1)$] if $X$ is smooth, then $X$ does not have $G$-fixed points;
\item[$(2)$] if $X$ has one singular point, then $\mathrm{Sing}(X)$ is the~only $G$-fixed point in $X$.
\end{itemize}
\end{lemma}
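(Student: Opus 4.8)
The plan is to use the $G$-equivariant birational morphism $\pi\colon X\to Q\subset\mathbb{P}^4$ to transport the problem into $\mathbb{P}^4$, where the $G$-fixed locus has already been identified. If $p\in X$ is fixed by $G$, then $\pi(p)$ is a $G$-fixed point of $Q$, hence a $G$-fixed point of $\mathbb{P}^4$, so by the computation recorded immediately before the lemma it is one of the four points
$$
[1:1:-1:-1:0],\quad [1:1:1:1:0],\quad [1:-1:1:-1:0],\quad [1:-1:-1:1:0].
$$
Each of these has $x$-coordinate $1$ and $z$-coordinate $\pm1$, so none of them lies on $C_1\subset\{x=0\}$ or on $C_2\subset\{z=0\}$. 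Since $\pi$ is an isomorphism over $Q\setminus(C_1\cup C_2)$, each of these four points that happens to lie on $Q$ has a single preimage in $X$, and that preimage is automatically $G$-fixed, because $G$ permutes the (one-point) fibre of $\pi$ over a $G$-fixed point. Hence $\pi$ induces a bijection between the $G$-fixed points of $X$ and those of the four points above that lie on the quadric $Q$.

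The next step is a direct substitution: plugging each of the four points into $w^2+xy+zt=a(xt+yz)+b(xz+yt)$, one sees in each case that the point lies on $Q$ precisely when the corresponding one of the four linear conditions $a+b+1=0$, $a+b-1=0$, $a-b+1=0$, $a-b-1=0$ holds. As the four points are pairwise distinct, the number of $G$-fixed points of $X$ equals the number of vanishing values among $a\pm b\pm1$. This immediately gives $(1)$: by hypothesis $X$ is smooth, which happens exactly when none of $a\pm b\pm1$ vanishes, so $X$ has no $G$-fixed point.

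For $(2)$ I would first argue that exactly one of the four linear conditions holds. Indeed, if $X$ has a unique singular point then, by the description of $\mathrm{Sing}(X)$ recalled at the beginning of this section, at least one of $a\pm b\pm1=0$ is satisfied and $ab\ne0$; and one checks that any two of the four conditions can hold simultaneously only when $ab=0$, a case in which $X$ is instead singular along a curve. So exactly one of the four points lies on $Q$, and $X$ has exactly one $G$-fixed point $p=\pi^{-1}(\mathrm{Sing}(Q))$, where I use that $\mathrm{Sing}(Q)$---being the unique singular point of the $G$-invariant quadric $Q$---is itself a $G$-fixed point of $\mathbb{P}^4$ on $Q$, hence the unique such point. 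Finally, since $\mathrm{Sing}(Q)\notin C_1\cup C_2$ the morphism $\pi$ is a local isomorphism near it, so $p$ is a singular point of $X$; as $X$ has only one singular point, $p=\mathrm{Sing}(X)$.

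The only point requiring real care---the main obstacle, modest as it is---is the bookkeeping in $(2)$: one must confirm that ``$X$ has exactly one singular point'' forces exactly one of $a\pm b\pm1$ to vanish, which reduces to checking that simultaneous vanishing of two of these linear forms implies $ab=0$, and then invoking the trichotomy (smooth / one singular point / singular along a line) already established for threefolds in class $(\beth)$. The rest is a formal consequence of the $G$-equivariance of $\pi$ together with the fact that the four $G$-fixed points of $\mathbb{P}^4$ avoid the centre $C_1\cup C_2$ of the blow-up.
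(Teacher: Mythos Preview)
Your proof is correct and follows essentially the same approach as the paper's own proof, which simply reads: ``If $Q$ is smooth, it does not contain any $G$-fixed points. Vice versa, if $Q$ is singular, then its singular point is the only $G$-fixed point in $Q$. The claim follows.'' You have supplied the details that the paper leaves implicit: that the four $G$-fixed points of $\mathbb{P}^4$ avoid $C_1\cup C_2$, so $\pi$ gives a bijection between $G$-fixed points of $X$ and of $Q$; that each of these four points lies on $Q$ exactly when the corresponding linear form $a\pm b\pm 1$ vanishes; and the bookkeeping showing that in the one-singular-point case exactly one such form vanishes.
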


\begin{proof}
If $Q$ is smooth, it does not contain any $G$-fixed points.
Vice versa, if $Q$ is singular, then its singular point is the~only $G$-fixed point in $Q$.
The claim follows.
\end{proof}

We will also need the~following technical lemma:

\begin{lemma}
\label{lemma:ODP-G-action}
Suppose that $a\pm b\pm 1=0$ and $ab\ne 0$. Let $O$ be the~singular point $\mathrm{Sing}(X)$,
let $\varphi\colon\widetilde{X}\to X$ be the~blow up of the~point $O$, and let $F$ be the~$\varphi$-exceptional~surface.
Then
\begin{itemize}
\item[$(1)$] $\varphi$ is $G$-equivariant,
\item[$(2)$] the~group $G$ acts faithfully on $F$,
\item[$(3)$] $\mathrm{rk}\,\mathrm{Pic}^G(F)=1$,
\item[$(4)$] $F$ has no $G$-fixed points.
\end{itemize}
\end{lemma}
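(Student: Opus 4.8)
\emph{Proof plan.}
Part~(1) is essentially formal. Since $O=\mathrm{Sing}(X)$ is the unique singular point of $X$, it is fixed by every automorphism of $X$, hence by $G$; thus $G$ preserves the ideal sheaf of $O$, and it acts on $\widetilde{X}=\mathrm{Bl}_O X$ so that $\varphi$ is $G$-equivariant (and $F=\varphi^{-1}(O)$ is $G$-invariant). For (2)--(4) the plan is to make $F$ and its $G$-action completely explicit. Because $\pi$ is an isomorphism near $O$, a neighbourhood of $O$ in $X$ is isomorphic to a neighbourhood of $\mathrm{Sing}(Q)$ in $Q$, which is the vertex of the quadric cone $Q$; the hypothesis $ab\neq 0$ forces $Q$ to have rank $4$, i.e. a single singular point. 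Hence $O$ is a node of $X$, so $\widetilde X$ is smooth and $F$ is the smooth quadric surface $\{f_2=0\}\subset\mathbb{P}(T_OX)\cong\mathbb{P}^3$ cut out by the tangent cone $f_2$ of $Q$ at its vertex, with $G$ acting through its linear action on $T_OX$. In particular $F\cong\mathbb{P}^1\times\mathbb{P}^1$.

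Next I would pass to the $G$-eigencoordinates $p_1=x+y+z+t$, $p_2=x+y-z-t$, $p_3=x-y+z-t$, $p_4=x-y-z+t$, $p_5=w$ on $\mathbb{P}^4$. In these coordinates $G$ acts diagonally, the five characters by which it acts on $p_1,\dots,p_5$ are pairwise distinct, and $\mathrm{Sing}(Q)$ is one of the coordinate points — which one depends on which of $a\pm b\pm1=0$ holds, but the four cases are interchanged by permuting $p_1,\dots,p_4$, so it suffices to treat $\mathrm{Sing}(Q)=[1:0:0:0:0]$. Since $Q$ is $G$-invariant with vertex $\mathrm{Sing}(Q)$, and the only $G$-invariant quadratic monomials in the $p_i$ are the $p_i^2$, the equation of $Q$ becomes $c_2p_2^2+c_3p_3^2+c_4p_4^2+c_5p_5^2=0$: the coefficient of $p_1^2$ vanishes because $\mathrm{Sing}(Q)$ is singular, and $c_2,c_3,c_4,c_5$ are all nonzero because $Q$ has a unique singular point. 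So $\mathbb{P}(T_OX)=\mathbb{P}^3$ with coordinates $[p_2:p_3:p_4:p_5]$, the surface $F$ is the smooth quadric $\{c_2p_2^2+c_3p_3^2+c_4p_4^2+c_5p_5^2=0\}$, and $G$ acts on $\mathbb{P}^3$ diagonally through the four distinct characters attached to $p_2,\dots,p_5$.

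Parts (2)--(4) now follow quickly. For (4): the $G$-fixed locus of $\mathbb{P}^3$ under a diagonal action with pairwise distinct characters consists of the four coordinate points, and none lies on $F$ since each makes exactly one term $c_ip_i^2$ nonzero; hence $F^G=\varnothing$. For (2): if a nontrivial $g\in G$ acted trivially on $F$, it would fix pointwise the nondegenerate quadric surface $F$, which spans $\mathbb{P}^3$, so $g$ would act trivially on $\mathbb{P}^3$, i.e. by a scalar on $T_OX$; but writing $g=\tau_1^a\tau_2^b\tau_3^c$ one checks directly that $g$ acts on $(p_2,p_3,p_4,p_5)$ by the signs $\big((-1)^b,(-1)^a,(-1)^{a+b},(-1)^c\big)$, which are all equal only when $a,b,c$ are even, i.e. $g=1$ (the other three cases for $\mathrm{Sing}(Q)$ are identical). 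For (3): $\tau_3$ preserves $F$ and, after rescaling the $p_i$ so that $F=\{p_2^2+p_3^2+p_4^2+p_5^2=0\}$, acts on $\mathbb{P}^3$ by the determinant $-1$ orthogonal involution $\mathrm{diag}(1,1,1,-1)$. An orthogonal transformation of a smooth quadric surface in $\mathbb{P}^3$ that does not lie in $\mathrm{PSO}$ interchanges the two rulings, so $\tau_3$ swaps the rulings $\ell_1,\ell_2$ of $F\cong\mathbb{P}^1\times\mathbb{P}^1$; therefore $\mathrm{Pic}^G(F)\subseteq\mathrm{Pic}^{\tau_3}(F)=\mathbb{Z}(\ell_1+\ell_2)$, and since $\ell_1+\ell_2$ is $G$-invariant ($-K_F=2(\ell_1+\ell_2)$ is, and $\mathrm{Pic}(F)$ is torsion-free) we get $\mathrm{Pic}^G(F)=\mathbb{Z}(\ell_1+\ell_2)$, of rank $1$.

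The only genuinely delicate point is the middle step — producing the normal form for $F$ and the $G$-action on it. It rests on two inputs: that $Q$ has exactly one singular point (so $f_2$ has rank $4$ and $F$ is smooth), and the elementary fact that in $G$-eigencoordinates the only $G$-invariant quadrics are diagonal. Once that normal form is in place, (2)--(4) are immediate, and the remaining cases $a\pm b\pm1=0$ need nothing new, since $\tau_3$ always acts by $p_5\mapsto -p_5$ and $p_5$ is always one of the four tangent coordinates at $O$.
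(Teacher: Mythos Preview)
Your proof is correct and follows essentially the same strategy as the paper: pass to coordinates adapted to the $G$-action, identify $F$ with an explicit smooth quadric surface in $\mathbb{P}^3$ carrying a linear $G$-action, and read off (2)--(4) directly. The paper uses the coordinates $\mathbf{x}=x-y$, $\mathbf{y}=x+y-z-t$, $\mathbf{z}=z-t$, $\mathbf{t}=x+y+z+t$, $\mathbf{w}=w$ (so only $\tau_1,\tau_3$ are diagonal, while $\tau_2$ swaps $\mathbf{x}\leftrightarrow\mathbf{z}$), writes $F$ as $\{\mathbf{x}^2-\mathbf{y}^2+\mathbf{z}^2=(4a-2)\mathbf{x}\mathbf{z}+4\mathbf{w}^2\}$, and then simply says the assertions are easy to check. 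Your choice of full $G$-eigencoordinates $p_1,\dots,p_5$ is a cleaner variant of the same idea: it forces the quadric to be diagonal, makes the character computation for (2) transparent, and makes the ``$\tau_3$ has determinant $-1$, hence swaps the rulings'' argument for (3) immediate; it also makes the reduction to a single case genuinely symmetric. One cosmetic point: you reuse the letters $a,b,c$ for exponents in $\tau_1^a\tau_2^b\tau_3^c$, which clashes with the parameters $a,b$ of the quadric; renaming these would avoid confusion.
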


\begin{proof}
We only consider the~case when $b=1-a$ and $a\not\in\{0,1\}$; the other cases are similar. Then
$$
Q=\big\{w^2+xy+zt-a(xt+yz)-(1-a)(xz+yt)=0\big\},
$$
and $\pi(O)=[1:1:1:1:0]$.
Let us introduce new coordinates on $\mathbb{P}^4$ as follows:
$$
\left\{\aligned
&\mathbf{x}=x-y, \\
&\mathbf{y}=x+y-z-t, \\
&\mathbf{z}=z-t, \\
&\mathbf{t}=x+y+z+t, \\
&\mathbf{w}=w.
\endaligned
\right.
$$
In these coordinates, $Q=\{\mathbf{x}^2-\mathbf{y}^2+\mathbf{z}^2=(4a-2)\mathbf{x}\mathbf{z}+4\mathbf{w}^2\}$ and $\pi(O)=[0:0:0:1:0]$.
Moreover, the~involutions $\tau_1$, $\tau_2$, $\tau_3$ acts as follows:
\begin{align*}
\tau_1\colon \big[\mathbf{x}:\mathbf{y}:\mathbf{z}:\mathbf{t}:\mathbf{w}\big]&\mapsto\big[-\mathbf{x}:\mathbf{y}:-\mathbf{z}:\mathbf{t}:\mathbf{w}\big],\\
\tau_2\colon \big[\mathbf{x}:\mathbf{y}:\mathbf{z}:\mathbf{t}:\mathbf{w}\big]&\mapsto\big[\mathbf{z}:-\mathbf{y}:\mathbf{x}:\mathbf{t}:\mathbf{w}\big],\\
\tau_3\colon \big[\mathbf{x}:\mathbf{y}:\mathbf{z}:\mathbf{t}:\mathbf{w}\big]&\mapsto\big[\mathbf{x}:\mathbf{y}:\mathbf{z}:\mathbf{t}:-\mathbf{w}\big].
\end{align*}
Furthermore, we can $G$-equivariantly identify $F$ with the~projectivization of the~tangent cone to $Q$ at the~point $\pi(O)$.
So, we can consider $F$ as the~quadric surface in $\mathbb{P}^3$ given by
$$
\mathbf{x}^2-\mathbf{y}^2+\mathbf{z}^2=(4a-2)\mathbf{x}\mathbf{z}+4\mathbf{w}^2,
$$
where now we consider $[\mathbf{x}:\mathbf{y}:\mathbf{z}:\mathbf{w}]$ as coordinates on $\mathbb{P}^3$.
Now, all assertions are easy to check looking on how the~group $G$ acts on the~surface $F$.
\end{proof}

We have the~following $G$-equivariant commutative diagram
$$
\xymatrix{
&X\ar@{->}[dl]_{\pi}\ar@{->}[dr]^{\eta}&\\%
{Q}\ar@{-->}[rr]_{\rho}&&\mathbb{P}^1\times\mathbb{P}^1}
$$
where $\eta$ is a~conic bundle given by the~natural projection $X\to\mathbb{P}^1\times\mathbb{P}^1$,
and $\rho$ is a~rational map that is given by $[x:y:z:t:w]\mapsto([x:y],[z:t])$.

\begin{remark}
\label{remark:G-action-P1-P1}
The~only $G$-fixed points in $\mathbb{P}^1\times\mathbb{P}^1$ are $([1:1],[1:1])$ and $([-1:1],[-1:1])$.
\end{remark}

Let $\Delta$ be the~discriminant curve of the~conic bundle $\eta$.
Then $\Delta$ is given by
\begin{multline*}
\quad\quad\quad a^2u_1^2v_2^2+2abu_1^2u_2v_2+b^2u_1^2u_2^2+(2a^2+2b^2-4)v_1v_2u_1u_2+\\
+2abu_1v_1u_2^2+2abu_1v_1v_2^2+a^2u_2^2v_1^2+2abu_2v_1^2v_2+b^2v_1^2v_2^2=0,\quad\quad\quad
\end{multline*}
where we consider $([u_1:v_1],[u_2:v_2])$ as coordinates on $\mathbb{P}^1\times\mathbb{P}^1$.

\begin{remark}
\label{remark:conic-bundle-1}
If $a\pm b\pm 1\ne 0$, then $\Delta$ is smooth.
If $a\pm b\pm 1=0$ and $ab\ne 0$, then
$$
\mathrm{Sing}(\Delta)=\left\{\aligned
&\big([1:1],[1:1]\big)\ \text{if $a+b\pm 1=0$}, \\
&\big([-1:1],[-1:1]\big)\ \text{if $a-b\pm 1=0$}. \\
\endaligned
\right.
$$
\end{remark}

Let $H$ be a~proper transform on $X$ of a~general hyperplane section of the~quadric $Q$, and let $E_1$ and $E_2$ be the~$\pi$-exceptional divisors
such that $\pi(E_1)=C_1$ and $\pi(E_2)=C_2$.

\begin{lemma}
\label{lemma:class-groups}
Suppose that $X$ has at most isolated singularities.
Then
$$
\mathrm{Pic}^G(X)=\mathrm{Cl}^G(X)=\mathbb{Z}[H]\oplus\mathbb{Z}[E_1+E_2].
$$
\end{lemma}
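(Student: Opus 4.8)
The plan is to compute the full class group $\mathrm{Cl}(X)$ first (without the $G$-action), show it has rank $2$ and is generated by $H$, $E_1$, $E_2$ subject to a single relation, and then pass to $G$-invariants. First I would observe that since $X$ has at most isolated singularities and these are ordinary double points (by the explicit description preceding the lemma: when $ab\ne 0$ the singular point is analytically $\mathbf{x}^2-\mathbf{y}^2+\mathbf{z}^2=(4a-2)\mathbf{x}\mathbf{z}+4\mathbf{w}^2$, a quadric cone, i.e.\ an ODP), the morphism $\pi\colon X\to Q$ is still the blow-up of the two conics $C_1,C_2$, which lie in the smooth locus of $Q$. For a smooth $Q$ we have $\mathrm{Cl}(Q)=\mathrm{Pic}(Q)=\mathbb{Z}[H_Q]$ (a smooth quadric threefold has cyclic Picard group), and blowing up the two disjoint smooth conics adds two exceptional divisor classes, so $\mathrm{Cl}(X)=\mathrm{Pic}(X)=\mathbb{Z}[H]\oplus\mathbb{Z}[E_1]\oplus\mathbb{Z}[E_2]$ is free of rank $3$. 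When $Q$ is singular along a line (the $ab=0$ case), $X$ is singular along a curve, which is excluded by hypothesis; when $Q$ has a single ODP, $\mathrm{Cl}(Q)=\mathbb{Z}[H_Q]$ still (a quadric threefold with one node has class group $\mathbb{Z}$, since its small resolution or the local analysis shows the nodal point imposes no new divisor class on the threefold — Weil divisors through the node are still Cartier multiples of $H_Q$ away from it), so again $\mathrm{Cl}(X)$ is free of rank $3$ generated by $H,E_1,E_2$.

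Next I would identify the $G$-action on this rank-$3$ lattice. The involutions $\tau_1,\tau_2,\tau_3$ fix $H$ (they are induced by linear automorphisms of $\mathbb{P}^4$ preserving the hyperplane class); $\tau_3$ fixes $C_1$ and $C_2$ individually, hence fixes $E_1$ and $E_2$; $\tau_1$ and $\tau_2$ each swap the planes $\{x=y=0\}$ and $\{z=t=0\}$, hence swap $C_1\leftrightarrow C_2$ and therefore $E_1\leftrightarrow E_2$. Thus $G$ acts on $\mathbb{Z}[H]\oplus\mathbb{Z}[E_1]\oplus\mathbb{Z}[E_2]$ through the order-two quotient $G\to\mathbb{Z}/2$ (generated by the class of $\tau_1$) that fixes $H$ and interchanges $E_1,E_2$. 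The invariant sublattice is exactly $\mathbb{Z}[H]\oplus\mathbb{Z}[E_1+E_2]$, which gives the claimed description of $\mathrm{Cl}^G(X)$. Since $X$ has only isolated (hence quotient, in fact hypersurface) singularities it is $\mathbb{Q}$-factorial and $\mathrm{Cl}(X)\otimes\mathbb{Q}=\mathrm{Pic}(X)\otimes\mathbb{Q}$; one should check that $H$ and $E_1+E_2$ are actually Cartier (they are: $H$ is a hyperplane section pulled back, and $E_1+E_2$ is $\pi$-exceptional for the blow-up of smooth conics in the smooth locus, so Cartier near the exceptional locus and obviously Cartier away from it, while near the single ODP of $X$ every Weil divisor is Cartier after the base change anyway since the local class group of an ODP threefold is trivial). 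Hence $\mathrm{Pic}^G(X)=\mathrm{Cl}^G(X)$ and both equal $\mathbb{Z}[H]\oplus\mathbb{Z}[E_1+E_2]$.

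The main obstacle I anticipate is the rank-$3$ claim for $\mathrm{Cl}(X)$ in the genuinely singular case, i.e.\ justifying cleanly that the single ordinary double point contributes no extra divisor class. For the smooth case the computation is the standard blow-up formula. In the nodal case the safest route is: take the small resolution (or equivalently the blow-up $\varphi\colon\widetilde X\to X$ of the node with exceptional quadric surface $F\cong\mathbb{P}^1\times\mathbb{P}^1$, as in Lemma~\ref{lemma:ODP-G-action}), compute $\mathrm{Pic}(\widetilde X)$ directly as a blow-up of a smooth quadric threefold along two smooth conics and then along a point — giving rank $4$ — and then note that passing to $X$ contracts $F$ and the resulting class group is the quotient by the class of $F$, but crucially $F$'s $\mathbb{Q}$-class is a rational combination of the images of $H,E_1,E_2$, so $\mathrm{Cl}(X)$ still has rank $3$ with the same generators. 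Alternatively, and more simply, one can argue that $X\setminus\mathrm{Sing}(X)$ is the blow-up of $Q\setminus\mathrm{Sing}(Q)$ along $C_1\cup C_2$ (smooth conics in the smooth locus), compute $\mathrm{Cl}$ of this open set as $\mathbb{Z}[H]\oplus\mathbb{Z}[E_1]\oplus\mathbb{Z}[E_2]$ using that $\mathrm{Cl}(Q\setminus\mathrm{pt})=\mathrm{Cl}(Q)=\mathbb{Z}$, and invoke $\mathrm{Cl}(X)=\mathrm{Cl}(X\setminus\mathrm{Sing}(X))$ since $\mathrm{Sing}(X)$ has codimension $\ge 2$. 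Everything else — the $G$-action on generators and extracting invariants — is bookkeeping.
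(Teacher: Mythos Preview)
Your proposal contains a genuine error in the singular case. You claim that ``the local class group of an ODP threefold is trivial'' and that $\mathrm{Cl}(Q)=\mathbb{Z}$ for a nodal quadric threefold. Both are false. A threefold ordinary double point is analytically $\{xy=zw\}$, the affine cone over $\mathbb{P}^1\times\mathbb{P}^1$, and its local class group is $\mathbb{Z}$, not $0$ (the Weil divisor $\{x=z=0\}$ is not Cartier). Globally, a rank~$4$ quadric $Q\subset\mathbb{P}^4$ is a cone over $\mathbb{P}^1\times\mathbb{P}^1$; removing the vertex gives an $\mathbb{A}^1$-bundle over $\mathbb{P}^1\times\mathbb{P}^1$, so $\mathrm{Cl}(Q)=\mathrm{Cl}(Q\setminus\{\mathrm{pt}\})\cong\mathrm{Pic}(\mathbb{P}^1\times\mathbb{P}^1)=\mathbb{Z}^2$, generated by the two pencils of planes through the vertex, with $H_Q$ equal to their sum. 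Hence $\mathrm{Cl}(X)$ has rank~$4$, not~$3$, and $\mathrm{Pic}(X)\subsetneq\mathrm{Cl}(X)$ in the nodal case.

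The substantive content of the lemma, which your argument misses, is that $G$ interchanges these two extra Weil classes. This is exactly what the paper extracts from Lemma~\ref{lemma:ODP-G-action}(3): on the exceptional quadric $F\cong\mathbb{P}^1\times\mathbb{P}^1$ over the node, $\mathrm{rk}\,\mathrm{Pic}^G(F)=1$, i.e.\ $G$ swaps the two rulings of $F$, which correspond precisely to the two families of planes in $Q$ through the vertex. Only then does $\mathrm{Cl}^G(X)$ drop to rank~$2$ and coincide with $\mathrm{Pic}^G(X)=\mathbb{Z}[H]\oplus\mathbb{Z}[E_1+E_2]$. (A minor slip: $\tau_1$ fixes each plane $\{x=y=0\}$ and $\{z=t=0\}$ setwise; it is $\tau_2$ that swaps $E_1$ and $E_2$. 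This does not affect the invariants, but the swap of the rulings of $F$ is a separate, nontrivial fact that cannot be read off from the action on $E_1,E_2$.)
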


\begin{proof}
If the~threefold $X$ is smooth, the~assertion is obvious, since $G$ swaps $E_1$ and $E_2$.
If $X$ has one singular point, the~assertion follows from Lemma~\ref{lemma:ODP-G-action}.
\end{proof}

Now, let us describe the~cone of $G$-invariant effective divisors on $X$.

\begin{lemma}
\label{lemma:Eff-cone}
Suppose that $X$ has at most isolated singularities.
Let $S$ be an $G$-invariant surface in $X$.
Then
$S\sim n_1(E_1+E_2)+n_2(2H-E_1-E_2)+n_3H$ for some \mbox{$n_1,n_2,n_3\in\mathbb{Z}_{\geqslant 0}$}.
\end{lemma}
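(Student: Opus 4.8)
The plan is to prove that $\mathrm{Eff}^G(X)$ is spanned by the three classes $E_1+E_2$, $2H-E_1-E_2$, and $H$, which is equivalent to the stated membership since $\mathrm{Cl}^G(X)$ has rank $2$ by Lemma~\ref{lemma:class-groups}. Since any effective class is a nonnegative combination of the classes of its irreducible components, and since a $G$-invariant surface decomposes into $G$-orbits of irreducible components, it suffices to exhibit the effective $G$-invariant cone as generated by these three rays; concretely, I would show that every irreducible $G$-invariant surface $S$ (or $G$-orbit of surfaces) has class lying in the cone they span. Writing $S\sim aH+b(E_1+E_2)$ with $a,b\in\mathbb{Z}$ (using that $G$ swaps $E_1$ and $E_2$), the claim becomes: either $b\geqslant 0$ (so $S$ is a combination of $H$ and $E_1+E_2$), or $b<0$ and then $a\geqslant -2b$, i.e.\ $S-(-b)(2H-E_1-E_2)=(a+2b)H$ is effective.

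First I would identify the two nontrivial boundary rays geometrically. The ray $\mathbb{R}_{\geqslant 0}[E_1+E_2]$ is obviously effective. For the ray $\mathbb{R}_{\geqslant 0}[2H-E_1-E_2]$, recall from the discussion preceding Lemma~\ref{lemma:class-groups} that $C_1$ and $C_2$ each span a plane contained in $Q$; hence there are hyperplane sections of $Q$ that split off these planes. More precisely, the linear system of hyperplane sections of $Q$ through both planes has $H$-class contributions: a hyperplane containing the plane $\langle C_1\rangle$ has proper transform $\sim H-E_1-$(residual), and combining the two planes one gets a $G$-invariant member of $|2H-E_1-E_2|$ (the proper transforms of the two reducible hyperplane sections $\{x=\lambda z\}$-type cutting out $\langle C_1\rangle\cup\langle C_2\rangle$, or more directly the surface $S_1+S_2$ appearing in the Zariski-decomposition computation in the proof of Lemma~\ref{lemma:K-unstable}, suitably transported to this $X$). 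So $2H-E_1-E_2$ is effective, and the cone spanned by the three classes is contained in $\mathrm{Eff}^G(X)$.

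For the reverse inclusion — that there is nothing outside this cone — I would argue by intersecting a putative $G$-invariant prime divisor $S\sim aH+b(E_1+E_2)$, $b<0$, against suitable $G$-invariant movable curve classes and the conic-bundle structure $\eta\colon X\to\mathbb{P}^1\times\mathbb{P}^1$. Intersecting with the class of a general fiber $\ell$ of $\eta$ gives $S\cdot\ell=a\cdot(H\cdot\ell)+b(E_1+E_2)\cdot\ell$; computing $H\cdot\ell$ and $E_i\cdot\ell$ on the conic bundle shows $S\cdot\ell\geqslant0$ forces $a\geqslant -2b$ once one checks $(E_1+E_2)\cdot\ell$ against $H\cdot\ell$. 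Alternatively, pair $S$ with the nef divisors $H$ and $2H-E_1-E_2$ and with a curve in a fiber of $\pi$ over a point of $C_i$: a line in $E_i$ meets $E_i$ negatively and $H$ in zero, so if $S\neq E_i$ then $S\cdot(\text{line in }E_i)\geqslant0$ bounds $b$ from below relative to $a$; the only prime divisors with $S\cdot(\text{ruling of }E_i)<0$ are $E_1,E_2$ themselves, already on the boundary. Assembling these inequalities pins $(a,b)$ inside the claimed cone.

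The main obstacle I anticipate is the reverse inclusion in the singular cases: when $X$ has a node or is singular along a curve, the conic bundle $\eta$ and the divisors $E_i$ behave less uniformly, and one must be careful that "$G$-invariant surface" really does reduce to a $G$-orbit of prime divisors with the intersection-theoretic positivity I want (e.g.\ a prime divisor through the singular locus could a priori have a fractional-looking class, but Lemma~\ref{lemma:class-groups} already rules that out since $\mathrm{Cl}^G(X)$ is the stated free module). I would handle this by reducing to $\widetilde{X}$ via the $G$-equivariant blow-up $\varphi$ of Lemma~\ref{lemma:ODP-G-action} when there is an isolated singularity, pulling back $S$, running the same fiber-intersection argument on $\widetilde{X}$ where all divisors are Cartier, and pushing the resulting inequalities back down; the line-singular case is checked directly from the explicit equations. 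Everything else is routine intersection-number bookkeeping of the kind already carried out in the proof of Lemma~\ref{lemma:K-unstable}.
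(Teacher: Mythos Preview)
Your approach is correct in outline and ends up establishing the same two inequalities the paper uses, namely $a\geqslant 0$ and $a+2b\geqslant 0$ for $S\sim aH+b(E_1+E_2)$, but the paper's execution is much shorter. The paper simply records that $\eta$ is given by $|2H-E_1-E_2|$ and asserts directly that any effective $G$-invariant class lies in the cone $\mathbb{Q}_{\geqslant 0}[E_1+E_2]+\mathbb{Q}_{\geqslant 0}[2H-E_1-E_2]$; it then handles integrality by a parity argument on $k_1=2m_2$, splitting according to whether $k_1$ is even (so $m_1,m_2\in\mathbb{Z}$ and one takes $n_3=0$) or odd (so one takes $n_3=1$). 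Your case split on the sign of $b$ is a valid and arguably cleaner alternative to that parity step: for $b\geqslant 0$ take $(n_1,n_2,n_3)=(b,0,a)$, and for $b<0$ take $(0,-b,a+2b)$.

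Several parts of your proposal are unnecessary or confused. You do not need a geometric effective representative of $2H-E_1-E_2$ built out of planes in $Q$; for generic $(a,b)$ the quadric $Q$ contains no plane through $C_i$, and the surfaces $S_1,S_2$ from Lemma~\ref{lemma:K-unstable} live on a different threefold. The class $2H-E_1-E_2$ is effective simply because it equals $\eta^*\mathcal{O}_{\mathbb{P}^1\times\mathbb{P}^1}(1,1)$. For the cone bound itself, $a\geqslant 0$ follows immediately from $S\cdot H^2=2a\geqslant 0$ (with $H$ nef and $S$ effective), and $a+2b\geqslant 0$ from intersecting with a general fibre $\ell$ of $\eta$, since $H\cdot\ell=2$ and $E_i\cdot\ell=2$. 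There is no need to pass to a resolution $\widetilde{X}$ or to treat the nodal case separately: Lemma~\ref{lemma:class-groups} already supplies integer coordinates in $\mathrm{Cl}^G(X)$, and both test curves can be chosen in the smooth locus.
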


\begin{proof}
By Lemma~\ref{lemma:class-groups}, we see that $S\sim k_1H+k_2(E_1+E_2)$ for some integers $k_1$ and $k_2$.
On the~other hand, the~conic bundle $\eta\colon X\to\mathbb{P}^1\times\mathbb{P}^1$ is given by $|2H-E_1-E_2|$.
Then
$$
S\sim_{\mathbb{Q}} m_1(E_1+E_2)+m_2(2H-E_1-E_2)
$$
for some non-negative rational numbers $m_1$ and $m_2$.
We have $k_1=2m_2$ and $m_1-m_2=k_2$.
If $k_1$ is even, then $m_1$ and $m_2$ are integers and we are done, since $\mathrm{Pic}(X)$ has no torsion.
So, we may assume that $k_1=2n+1$, where $n\in\mathbb{Z}$.
Then $m_2=n+\frac{1}{2}$ and $m_1=k_2+n+\frac{1}{2}$,
which gives $n\geqslant 0$ and $k_2+n\geqslant 0$, since $m_1\geqslant 0$ and $m_2\geqslant 0$.
Hence, we have
$$
S\sim_{\mathbb{Q}} (k_2+n)(E_1+E_2)+n(2H-E_1-E_2)+H,
$$
which gives $S\sim (k_2+n)(E_1+E_2)+n(2H-E_1-E_2)+H$, since $\mathrm{Pic}(X)$ has no torsion.
\end{proof}

\begin{corollary}
\label{corollary:divisorial-stability}
Suppose that $X$ has at most isolated singularities.
Let $S$ be a~$G$-invariant irreducible surface in $X$. Then $\beta(S)>0$.
\end{corollary}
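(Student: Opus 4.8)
Since $S$ is a prime divisor on $X$, we have $A_X(S)=1$, so $\beta(S)=1-S_X(S)$, and it is enough to show that $S_X(S)<1$, where
\[
S_X(S)=\frac{1}{(-K_X)^3}\int_0^{\tau(S)}\mathrm{vol}\big(-K_X-uS\big)\,du,
\]
and $\tau(S)$ denotes the pseudoeffective threshold of $S$. Write $G_1=E_1+E_2$ and $G_2=2H-E_1-E_2$, so that $H=\frac12(G_1+G_2)$ and $-K_X=3H-E_1-E_2=\frac12G_1+\frac32G_2$. By Lemma~\ref{lemma:Eff-cone}, the cone of $G$-invariant pseudoeffective classes is the closed polyhedral cone $\mathrm{Cone}(G_1,G_2)$, and $G_1,G_2,H$ are all classes of effective divisors. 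I will also use the elementary fact that $S_X$ is monotone: if $D-D'$ is pseudoeffective then $S_X(D)\le S_X(D')$, because $\mathrm{vol}$ is non-decreasing with respect to the pseudoeffective order and $-K_X-uD=(-K_X-uD')-u(D-D')$.

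Now take an irreducible $G$-invariant surface $S$. By Lemma~\ref{lemma:Eff-cone} we may write $S\sim n_1G_1+n_2G_2+n_3H$ with $n_1,n_2,n_3\in\mathbb{Z}_{\ge0}$, not all zero. If $n_3\ge1$, then $[S]-H=n_1G_1+n_2G_2+(n_3-1)H$ is a non-negative combination of the effective classes $G_1,G_2,H$, hence effective, so $S_X(S)\le S_X(H)$ by monotonicity. Similarly, if $n_3=0$ and $n_1\ge1$ then $S_X(S)\le S_X(G_1)$, while if $n_3=n_1=0$ then $n_2\ge1$ and $S_X(S)\le S_X(G_2)$. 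Thus the corollary reduces to the three numerical inequalities $S_X(H)<1$, $S_X(G_1)<1$ and $S_X(G_2)<1$.

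For $S_X(H)$ and $S_X(G_1)$ I would use the crude bound $S_X(D)<\tau(D)$, which is strict because $\mathrm{vol}(-K_X-uD)<(-K_X)^3$ for every $u>0$ (near $u=0$ the volume equals $(-K_X-uD)^3$ and has derivative $-3(-K_X)^2\!\cdot\!D<0$, as $-K_X$ is ample and $D$ is effective and nonzero). Reading off $-K_X-uH=\frac{1-u}{2}G_1+\frac{3-u}{2}G_2$ and $-K_X-uG_1=(\frac12-u)G_1+\frac32G_2$ against $\mathrm{Cone}(G_1,G_2)$ gives $\tau(H)=1$ and $\tau(G_1)=\frac12$, so $S_X(H)<1$ and $S_X(G_1)<\frac12<1$. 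For $S_X(G_2)$ we have $\tau(G_2)=\frac32$, so an explicit computation is needed. Here the key identities are $-K_X-G_2=H$, which gives $-K_X-uG_2=(1-u)(-K_X)+uH$, a convex combination of nef classes, hence nef, for $u\in[0,1]$, so that $\mathrm{vol}(-K_X-uG_2)=(-K_X-uG_2)^3$; and $-K_X-uG_2=(3-2u)H+(u-1)(E_1+E_2)$ for $u\in[1,\frac32]$, where, since $E_1,E_2$ are $\pi$-exceptional and $(3-2u)H=\pi^*\big((3-2u)H_Q\big)$, a pushforward argument gives $\mathrm{vol}(-K_X-uG_2)=\mathrm{vol}\big((3-2u)H\big)=2(3-2u)^3$. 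Using the intersection numbers $H^3=2$, $H^2\!\cdot\!E_i=0$, $H\!\cdot\!E_i^2=-2$, $E_i^3=-4$, $E_1\!\cdot\!E_2=0$ (so $(-K_X)^3=26$, $(-K_X)^2\!\cdot\!G_2=12$, $(-K_X)\!\cdot\!G_2^2=4$, $G_2^3=0$), the integral becomes
\[
S_X(G_2)=\frac1{26}\left(\int_0^1(26-36u+12u^2)\,du+\int_1^{3/2}2(3-2u)^3\,du\right)=\frac1{26}\Big(12+\frac14\Big)=\frac{49}{104}<1 .
\]
This proves $\beta(S)>0$.

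The one step that is not pure bookkeeping is the volume computation for $-K_X-uG_2$ on the interval $[1,\frac32]$, i.e.\ the claim that $(3-2u)H$ is the positive part of its Zariski decomposition there. This is also where the hypothesis that $X$ has at most isolated singularities enters: one needs the conics $C_1,C_2$ (which avoid $\mathrm{Sing}(Q)$) to be blown up cleanly, so that $\pi_*\mathcal{O}_X(kE_i)=\mathcal{O}_Q$ for $k\ge0$ and the intersection table above is unaffected when $X$ acquires an ordinary double point; granting this, the argument goes through verbatim in the singular case, relying only on Lemma~\ref{lemma:Eff-cone} and the two convex‑combination identities above.
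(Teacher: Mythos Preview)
Your proof is correct and follows essentially the same approach as the paper: use Lemma~\ref{lemma:Eff-cone} and monotonicity of $S_X$ to reduce to bounding $S_X$ on the three generators $H$, $E_1+E_2$, $2H-E_1-E_2$, then verify each is $<1$. The only difference is that the paper computes all three integrals explicitly (getting $\tfrac{21}{52}$, $\tfrac{53}{208}$, $\tfrac{49}{104}$), whereas you shortcut the first two via the threshold bound $S_X(D)<\tau(D)$; your explicit computation of $S_X(G_2)=\tfrac{49}{104}$ matches the paper's.
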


\begin{proof}
We have $\beta(S)=1-S_X(S)$, and it follows from Lemma~\ref{lemma:Eff-cone} that
$$
S_X(S)=\frac{1}{26}\int\limits_{0}^{\infty}\mathrm{vol}\big(-K_X-uS\big)du\leqslant\frac{1}{26}\int\limits_{0}^{\infty}\mathrm{vol}\big(-K_X-uF\big)du,
$$
where $F$ is one divisor among $H$, $E_1+E_2$, $2H-E_1-E_2$.
On the~other hand, we have
$$
\frac{1}{26}\int\limits_{0}^{\infty}\mathrm{vol}\big(-K_X-uH\big)du=\frac{1}{26}\int\limits_{0}^{1}\big(-K_X-uH\big)^3du=\frac{1}{26}\int\limits_{0}^{1}2(1-u)(u^2-8u+13)du=\frac{21}{52}.
$$
Similarly, we have
$$
\frac{1}{26}\int\limits_{0}^{\infty}\mathrm{vol}\big(-K_X-u(E_1+E_2)\big)du=\frac{1}{26}\int\limits_{0}^{\frac{1}{2}}2(2u-1)(2u^2-2u-13)du=\frac{53}{208}.
$$
Finally, we compute
\begin{multline*}
\frac{1}{26}\int\limits_{0}^{\infty}\mathrm{vol}\big(-K_X-u(2H-E_1-E_2)\big)du=
\frac{1}{26}\int\limits_{0}^{1}\big((3-2u)H+(u-1)E_1+(u-1)E_2\big)^3du+\\
+\frac{1}{26}\int\limits_{1}^{\frac{3}{2}}\big((3-2u)H\big)^3du=\frac{1}{26}\int\limits_{0}^{1}12u^2-36u+26du+\frac{1}{26}\int\limits_{1}^{\frac{3}{2}}2(3-2u)^3du=\frac{49}{104}.
\end{multline*}
Thus, we conclude that $\beta(S)>0$, as claimed.
\end{proof}

We conclude this section with the~following technical result:

\begin{proposition}
\label{proposition:delta-dP4}
Suppose that $X$ has at most isolated singularities, and let $S$ be a~smooth surface in one of the linear systems $|H-E_1|$ or $|H-E_2|$. Then
$$
\delta_P(X)\geqslant\frac{104}{99}
$$
for every point $P\in S$ such that $P\not\in E_1\cup E_2$.
\end{proposition}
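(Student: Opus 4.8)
The plan is to bound $\delta_P(X)$ from below by estimating the Abban--Zhuravlev type invariant along the flag $P \in S \subset X$, where $S$ is the smooth surface in $|H-E_1|$ (the case $|H-E_2|$ being symmetric under $\tau_2$). First I would identify $S$: since $H$ is the pullback of a hyperplane section of $Q$ and $E_1$ is the exceptional divisor over $C_1$, the surface $S$ is the proper transform of a hyperplane section of $Q$ passing through the conic $C_1$, and $\pi|_S \colon S \to \pi(S)$ contracts nothing away from $C_1$; one computes that $S$ is a (weak) del Pezzo surface of degree $4$, which accounts for the target constant $\frac{104}{99}$. The key numerical input is the Zariski decomposition of $-K_X - uS = -K_X - u(H-E_1)$ as a function of $u \in [0, \tau]$, where $\tau$ is the pseudoeffective threshold; since $X$ has at most isolated singularities, intersection theory on $X$ is available and these volumes are piecewise polynomial in $u$.

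The main steps are: (1) compute $S_X(S) = \frac{1}{(-K_X)^3}\int_0^\tau \mathrm{vol}(-K_X - uS)\,du$ explicitly — I expect this to come out to something like $\frac{99}{104}$ or smaller, so that $A_X(S)/S_X(S) \geqslant \frac{104}{99}$; (2) for a point $P \in S$ with $P \notin E_1 \cup E_2$, pass to the refined invariant $\delta_P(X) \geqslant \min\{ \frac{A_X(S)}{S_X(S)}, \delta_P(S, W^S_\bullet) \}$ where $W^S_\bullet$ is the refinement of the filtration to $S$; (3) on the del Pezzo surface $S$ of degree $4$, control $\delta_P(S, W^S_\bullet)$ by running the same flag method one dimension lower, choosing a suitable curve $C \subset S$ through $P$ (a line, or a $(-1)$-curve, or the restriction of $E_1$ — but since $P \notin E_1$ we have more freedom), and reduce to computing $S(W^{S,C}_{\bullet,\bullet}; P)$, a one-dimensional integral. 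The constant $\frac{104}{99}$ should emerge as the bottleneck of one of these pieces.

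The hard part will be step (3): controlling the refined $\delta$-invariant on $S$ uniformly over all points $P \in S \setminus (E_1 \cup E_2)$. One must handle the stratification of $S$ by how $P$ sits relative to the finitely many negative curves (the restrictions of $E_2$, of $2H-E_1-E_2$, lines on the del Pezzo, and the singular fibres of the conic bundle $\eta|_S$), and in the worst stratum verify the inequality by an explicit choice of auxiliary curve. Because $P$ is assumed off $E_1 \cup E_2$, the restriction of the conic bundle structure $\eta$ to $S$ is particularly useful: a general fibre of $\eta|_S$ through $P$ is a smooth rational curve meeting $-K_X$ with small degree, and $S_X$ of such a fibre is easy to bound. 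I would organize the estimate so that the only place the bound $\frac{104}{99}$ is (nearly) attained is a single explicit configuration, and everything else is strictly better; the $G$-equivariance plays no role here since the statement is for an individual (not necessarily fixed) point $P$, so this is a purely local computation on a degree $4$ del Pezzo surface.
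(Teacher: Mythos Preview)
Your overall strategy --- run Abban--Zhuang along a flag $P\in C\subset S$ with $S\in|H-E_1|$, and choose $C$ to be a fibre (or component thereof) of the conic bundle structure on $S$ --- is exactly what the paper does. However, several of your geometric and numerical expectations are wrong, and if you carried them into the computation you would not arrive at the stated bound.

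First, $S$ is a del Pezzo surface of degree~$6$, not~$4$: by adjunction $-K_S=(2H-E_2)\vert_S$, and $(2H-E_2)^2\cdot(H-E_1)=6$. Concretely, $E_2\vert_S=\mathbf{e}_1+\mathbf{e}_2$ splits into two disjoint $(-1)$-curves, and $E_1\vert_S\sim H\vert_S$; the relevant conic bundle on $S$ is the projection to $[u_2:v_2]$, whose general fibre $Z$ satisfies $Z\cdot\mathbf{e}_1=Z\cdot\mathbf{e}_2=1$. Second, $S_X(S)=\tfrac{3}{4}$, not $\tfrac{99}{104}$: the divisor $-K_X-uS$ is nef for $u\in[0,1]$, has pseudoeffective threshold $2$, and for $u\in[1,2]$ its negative part is $(u-1)E_1$; integrating the resulting volumes gives $\tfrac{3}{4}$. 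So the first level of the flag is not the bottleneck.

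The constant $\tfrac{99}{104}$ appears only at the curve level, and only in the worst case. When the fibre $Z$ through $P$ is irreducible one gets $S(W^S_{\bullet,\bullet};Z)=\tfrac{3}{4}$ and $S(W^{S,Z}_{\bullet,\bullet,\bullet};P)=\tfrac{21}{26}$. The extremal case is when $Z=\ell_1+\ell_2$ is reducible and one takes $C=\ell_1\ni P$: then $S(W^S_{\bullet,\bullet};\ell_1)=\tfrac{99}{104}$, and $S(W^{S,\ell_1}_{\bullet,\bullet,\bullet};P)$ equals $\tfrac{37}{52}$ if $P\notin\ell_2$ and $\tfrac{99}{104}$ if $P=\ell_1\cap\ell_2$. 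The hypothesis $P\notin E_1\cup E_2$ is used to ensure $P\notin\mathrm{Supp}(N(u))$ and $P\notin\mathbf{e}_1\cup\mathbf{e}_2$, which keeps the negative parts from contributing extra $\mathrm{ord}_P$ terms. Your instinct that the worst case is a single explicit configuration is correct, but it lives on a sextic del Pezzo, not a quartic.
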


\begin{proof}
We may assume that $S\in |H-E_1|$.
Let $u$ be a~non-negative real number. Then
$$
-K_X-uS\sim_{\mathbb{R}}(2-u)S+(H-E_2)+E_1\sim_{\mathbb{R}} (3-u)H-(1-u)E_1-E_2.
$$
Then $-K_X-uS$ is nef $\iff$ $u\in[0,1]$, and $-K_X-uS$ is pseudo-effective $\iff$ $u\in[0,2]$.
Moreover, if $u\in[1,2]$, then its Zariski decomposition can be described as follows:
$$
-K_X-uS\sim_{\mathbb{R}}\underbrace{(3-u)H-E_2}_{\text{positive part}}+\underbrace{(u-1)E_1}_{\text{negative part}}.
$$
Thus, for transparency, we let
$$
P(u)=\left\{\aligned
&(3-u)H-(1-u)E_1-E_2\ \text{if $0\leqslant u\leqslant 1$}, \\
&(3-u)H-E_2\ \text{if $1\leqslant u\leqslant 2$},
\endaligned
\right.
$$
and
$$
N(u)=\left\{\aligned
&0\ \text{if $0\leqslant u\leqslant 1$}, \\
&(u-1)E_1\ \text{if $1\leqslant u\leqslant 2$}.
\endaligned
\right.
$$
Then
$$
\big(P(u)\big)^3=\left\{\aligned
&26-18u\ \text{if $0\leqslant u\leqslant 1$}, \\
&40-2u^3+18u^2-48u\ \text{if $1\leqslant u\leqslant 2$}.
\endaligned
\right.
$$
Then
$$
S_X(S)=\frac{1}{(-K_{X})^3}\int\limits_{0}^{2}\mathrm{vol}\big(-K_X-uS\big)du=\frac{1}{26}\int\limits_{0}^{2}\big(P(u)\big)^3du=\frac{3}{4},
$$
so that $\beta(S)=1-S_X(S)=\frac{1}{4}$.

Let $P$ be a~point in $S$, and let $C$ be an irreducible smooth curve in $S$ such that $P\in C$.
Write $N(u)\vert_{S}=N^\prime(u)+\mathrm{ord}_{C}(N(u)\vert_{S})C$.
For every $u\in[0,2]$, set
$$
t(u)=\mathrm{sup}\Big\{v\in\mathbb{R}_{\geqslant 0}\ \big\vert\ \text{the~divisor  $P(u)\big\vert_{S}-vC$ is pseudo-effective}\Big\}.
$$
Let $v$ be a~real number in $[0,t(u)]$, let $P(u,v)$ be the~positive part of the~Zariski decomposition of~the~$\mathbb{R}$-divisor $P(u)\vert_{S}-vC$,
and let $N(u,v)$ be its~negative part. Set
$$
S\big(W^S_{\bullet,\bullet};C\big)=\frac{3}{(-K_X)^3}\int\limits_0^2\big(P(u)\big\vert_{S}\big)^2\cdot\mathrm{ord}_{C}\big(N(u)\big\vert_{S}\big)du+\frac{3}{(-K_X)^3}\int\limits_0^2\int\limits_0^{t(u)}\big(P(u,v)\big)^2dvdu
$$
and
\begin{multline*}
S\big(W_{\bullet, \bullet,\bullet}^{S,C};P\big)=\frac{3}{(-K_X)^3}\int\limits_0^2\int\limits_0^{t(u)}\big(P(u,v)\cdot C\big)^2dvdu+\\
+\frac{6}{(-K_X)^3} \int\limits_0^2\int\limits_0^{t(u)}\big(P(u,v)\cdot C\big)\cdot \mathrm{ord}_P\big(N^\prime(u)\big|_C+N(u,v)\big|_C\big)dvdu.
\end{multline*}
Then it follows from \cite{AbbanZhuang,Book} that
$$
\delta_P(X)\geqslant\min\Bigg\{\frac{1}{S_X(S)},\frac{1}{S\big(W^S_{\bullet,\bullet};C\big)},\frac{1}{S\big(W_{\bullet, \bullet,\bullet}^{S,C};P\big)}\Bigg\}.
$$
But $S_X(S)=\frac{3}{4}$.
Hence, to complete the~proof,
it is enough to find an~irreducible smooth curve $C\subset S$ such that $P\in C$,
and $S(W^S_{\bullet,\bullet};C)\leqslant\frac{99}{104}\geqslant S(W_{\bullet, \bullet,\bullet}^{S,C};P)$ if $P\not\in E_1\cup E_2$.

Now, suppose that $P\not\in E_1\cup E_2$.
In particular, this assumption implies $P\not\in\mathrm{Supp}(N(u))$, so that the~formulas for $S(W^S_{\bullet,\bullet};C)$ and $S(W_{\bullet, \bullet,\bullet}^{S,C};P)$ simplify as follows:
$$
S\big(W^S_{\bullet,\bullet};C\big)=\frac{3}{(-K_X)^3}\int\limits_0^2\int\limits_0^{t(u)}\big(P(u,v)\big)^2dvdu
$$
and
$$
S\big(W_{\bullet, \bullet,\bullet}^{S,C};P\big)=\frac{3}{(-K_X)^3}\int\limits_0^2\int\limits_0^{t(u)}\big(P(u,v)\cdot C\big)\Big
(\big(P(u,v)\cdot C\big)+2\mathrm{ord}_P\big(N(u,v)\big|_C\big)\Big)dvdu.
$$
To find the~required curve $C$, recall that $S$ is a~del Pezzo surface of degree $6$, and
$$
E_2\big\vert_{S}=\mathbf{e}_1+\mathbf{e}_2,
$$
where $\mathbf{e}_1$ and $\mathbf{e}_2$ are disjoint $(-1)$-curves in the~surface $S$.
Let $Z$ be the~fiber of the~conic bundle $S\to\mathbb{P}^1$ given by $([u_1:v_1],[u_2:v_2],[x:y:z:t:w])\mapsto[u_2:v_2]$ such that $P\in Z$.
Then $Z\cdot \mathbf{e}_1=Z\cdot\mathbf{e}_2=1$ and
$$
E_1\big\vert_{S}\sim H\big\vert_{S}\sim Z+\mathbf{e}_1+\mathbf{e}_2.
$$
Let us choose $C$ to be an irreducible component of the~fiber $Z$ that contains the~point $P$.
A priori, we may have the~following two possibilities:
\begin{enumerate}
\item $Z$ is an irreducible smooth rational curve and $Z^2=0$;
\item $Z=\ell_1+\ell_2$ for two $(-1)$-curves $\ell_1$ and $\ell_2$ such that
$\ell_1\cdot\ell_2=\ell_1\cdot \mathbf{e}_1=\ell_2\cdot\mathbf{e}_2=1$.
\end{enumerate}
In the~first case, we have $C=Z$.
In the~second case, we may assume $P\in\ell_1$, so~$C=\ell_1$.
Let us compute $S(W^S_{\bullet,\bullet};C)$ and $S(W_{\bullet, \bullet,\bullet}^{S,C};P)$ in each of these cases.

First, we note that
$$
P(u)\big\vert_{S}=\left\{\aligned
&2Z+\mathbf{e}_1+\mathbf{e}_2\ \text{if $0\leqslant u\leqslant 1$}, \\
&(3-u)Z+(2-u)(\mathbf{e}_1+\mathbf{e}_2)\ \text{if $1\leqslant u\leqslant 2$}.
\endaligned
\right.
$$

Let $v$ be a~non-negative real number. If $C=Z$, then
$$
P(u)\big\vert_{S}-vC=\left\{\aligned
&(2-v)C+\mathbf{e}_1+\mathbf{e}_2\ \text{if $0\leqslant u\leqslant 1$}, \\
&(3-u-v)C+(2-u)(\mathbf{e}_1+\mathbf{e}_2)\ \text{if $1\leqslant u\leqslant 2$},
\endaligned
\right.
$$
which implies that
$$
t(u)=\left\{\aligned
&2\ \text{if $0\leqslant u\leqslant 1$}, \\
&3-u\ \text{if $1\leqslant u\leqslant 2$}.
\endaligned
\right.
$$
Furthermore, if $C=Z$ and $u\in[0,1]$, then
$$
P(u,v)=\left\{\aligned
&(2-v)C+\mathbf{e}_1+\mathbf{e}_2\ \text{if $0\leqslant v\leqslant 1$}, \\
&(2-v)\big(C+\mathbf{e}_1+\mathbf{e}_2\big)\ \text{if $1\leqslant v\leqslant 2$},
\endaligned
\right.
$$
and
$$
N(u,v)=\left\{\aligned
&0\ \text{if $0\leqslant v\leqslant 1$}, \\
&(v-1)\big(\mathbf{e}_1+\mathbf{e}_2\big)\ \text{if $1\leqslant v\leqslant 2$},
\endaligned
\right.
$$
which gives
$$
\big(P(u,v)\big)^2=\left\{\aligned
&6-4v\ \text{if $0\leqslant v\leqslant 1$}, \\
&2(v-2)^2\ \text{if $1\leqslant v\leqslant 2$},
\endaligned
\right.
$$
and
$$
P(u,v)\cdot C=\left\{\aligned
&2\ \text{if $0\leqslant v\leqslant 1$}, \\
&4-2v\ \text{if $1\leqslant v\leqslant 2$}.
\endaligned
\right.
$$
Similarly, if $C=Z$ and $u\in[1,2]$, then
$$
P(u,v)=\left\{\aligned
&(3-u-v)C+(2-u)(\mathbf{e}_1+\mathbf{e}_2)\ \text{if $0\leqslant v\leqslant 1$}, \\
&(3-u-v)\big(C+\mathbf{e}_1+\mathbf{e}_2\big)\ \text{if $1\leqslant v\leqslant 3-u$},
\endaligned
\right.
$$
and
$$
N(u,v)=\left\{\aligned
&0\ \text{if $0\leqslant v\leqslant 1$}, \\
&(v-1)\big(\mathbf{e}_1+\mathbf{e}_2\big)\ \text{if $1\leqslant v\leqslant 3-u$},
\endaligned
\right.
$$
so that
$$
\big(P(u,v)\big)^2=\left\{\aligned
&2(2-u)(4-u-2v)\ \text{if $0\leqslant v\leqslant 1$}, \\
&2(3-u-v)^2\ \text{if $1\leqslant v\leqslant 3-u$},
\endaligned
\right.
$$
and
$$
P(u,v)\cdot C=\left\{\aligned
&4-2u\ \text{if $0\leqslant v\leqslant 1$}, \\
&6-2u-2v\ \text{if $1\leqslant v\leqslant 3-u$}.
\endaligned
\right.
$$
In particular, if $C=Z$, then $P\not\in\mathrm{Supp}(N(u,v))$, because $P\not\in E_2$ by our assumption.
Now, integrating, we get $S(W^S_{\bullet,\bullet};C)=\frac{3}{4}<\frac{99}{104}$ and $S(W_{\bullet, \bullet,\bullet}^{S,C};P)=\frac{21}{26}<\frac{99}{104}$.

Hence, to complete the~proof, we may assume that $Z=\ell_1+\ell_2$ and $C=\ell_1$. Then
$$
P(u)\big\vert_{S}-vC=\left\{\aligned
&(2-v)C+2\ell_2+\mathbf{e}_1+\mathbf{e}_2\ \text{if $0\leqslant u\leqslant 1$}, \\
&(3-u-v)C+(3-u)\ell_2+(2-u)(\mathbf{e}_1+\mathbf{e}_2)\ \text{if $1\leqslant u\leqslant 2$},
\endaligned
\right.
$$
which implies that
$$
t(u)=\left\{\aligned
&2\ \text{if $0\leqslant u\leqslant 1$}, \\
&3-u\ \text{if $1\leqslant u\leqslant 2$}.
\endaligned
\right.
$$
Further, if $u\in[0,1]$, then
$$
P(u,v)=\left\{\aligned
&(2-v)C+2\ell_2+\mathbf{e}_1+\mathbf{e}_2\ \text{if $0\leqslant v\leqslant 1$}, \\
&(2-v)\big(C+\mathbf{e}_1\big)+(3-v)\ell_2+\mathbf{e}_2\ \text{if $1\leqslant v\leqslant 2$},
\endaligned
\right.
$$
and
$$
N(u,v)=\left\{\aligned
&0\ \text{if $0\leqslant v\leqslant 1$}, \\
&(v-1)\big(\mathbf{e}_1+\ell_2\big)\ \text{if $1\leqslant v\leqslant 2$},
\endaligned
\right.
$$
which gives
$$
\big(P(u,v)\big)^2=\left\{\aligned
&6-v^2-2v\ \text{if $0\leqslant v\leqslant 1$}, \\
&(2-v)(4-v)\ \text{if $1\leqslant v\leqslant 2$},
\endaligned
\right.
$$
and
$$
P(u,v)\cdot C=\left\{\aligned
&1+v\ \text{if $0\leqslant v\leqslant 1$}, \\
&3-v\ \text{if $1\leqslant v\leqslant 2$}.
\endaligned
\right.
$$
Likewise, if $u\in[1,2]$, then
$$
P(u,v)=\left\{\aligned
&(3-u-v)C+(3-u)\ell_2+(2-u)(\mathbf{e}_1+\mathbf{e}_2)\ \text{if $0\leqslant v\leqslant 2-u$}, \\
&(3-u-v)C+(5-2u-v)\ell_2+(2-u)(\mathbf{e}_1+\mathbf{e}_2)\ \text{if $2-u\leqslant v\leqslant 1$}, \\
&(3-u-v)(C+\mathbf{e}_1)+(5-2u-v)\ell_2+(2-u)\mathbf{e}_2\ \text{if $1\leqslant v\leqslant 3-u$},
\endaligned
\right.
$$
and
$$
N(u,v)=\left\{\aligned
&0\ \text{if $0\leqslant v\leqslant 2-u$}, \\
&(v-2+u)\ell_2\ \text{if $2-u\leqslant v\leqslant 1$}, \\
&(v-2+u)\ell_2+(v-1)\mathbf{e}_1\ \text{if $1\leqslant v\leqslant 3-u$},
\endaligned
\right.
$$
so that
$$
\big(P(u,v)\big)^2=\left\{\aligned
&2u^2+2uv-v^2-12u-4v+16\ \text{if $0\leqslant v\leqslant 2-u$}, \\
&(u-2)(3u+4v-10)\ \text{if $2-u\leqslant v\leqslant 1$}, \\
&(3-u-v)(7-3u-v)\ \text{if $1\leqslant v\leqslant 3-u$},
\endaligned
\right.
$$
and
$$
P(u,v)\cdot C=\left\{\aligned
&2-u+v\ \text{if $0\leqslant v\leqslant 2-u$}, \\
&4-2u\ \text{if $2-u\leqslant v\leqslant 1$}, \\
&5-2u-v\ \text{if $1\leqslant v\leqslant 3-u$}.
\endaligned
\right.
$$
Now, integrating, we get $S(W^S_{\bullet,\bullet};C)=\frac{99}{104}$.
Note that $P\not\in\mathbf{e}_1$, since $P\not\in E_2$ by assumption.
Therefore, if $P\not\in\ell_2$, then $P\not\in\mathrm{Supp}(N(u,v))$,
which implies that $S(W_{\bullet, \bullet,\bullet}^{S,C};P)=\frac{37}{52}<\frac{99}{104}$.
Similarly, if $P\in\ell_2$, then
\begin{multline*}
S\big(W_{\bullet, \bullet,\bullet}^{S,C};P\big)=\frac{37}{52}+\frac{6}{26} \int\limits_0^2\int\limits_0^{t(u)}\big(P(u,v)\cdot C\big)\cdot \mathrm{ord}_P\big(N(u,v)\big|_C\big)dvdu=\\
=\frac{37}{52}+\frac{6}{26}\int\limits_0^1\int\limits_1^{2}\big(P(u,v)\cdot C\big)(v-1)dvdu+\frac{6}{26}\int\limits_1^2\int\limits_{2-u}^{3-u}\big(P(u,v)\cdot C\big)(v-2+u)dvdu=\\
=\frac{37}{52}+\frac{6}{26}\int\limits_0^1\int\limits_1^{2}(3-v)(v-1)dvdu+\frac{6}{26}\int\limits_1^2\int\limits_{2-u}^{1}(4-2u)(v-2+u)dvdu+\\
+\frac{6}{26}\int\limits_1^2\int\limits_1^{3-u}(5-2u-v)(v-2+u)dvdu=\frac{99}{104}.\quad\quad\quad\quad\quad\quad\quad\quad\quad
\end{multline*}
This completes the~proof of Proposition~\ref{proposition:delta-dP4}.
\end{proof}

\subsection{Smooth threefolds.}
\label{section:beth-smooth}
We continue to use the notation introduced earlier in this section.
The goal of this subsection is to give a~new proof~of the~following result.

\begin{theorem}[{\cite[Proposition 5.79]{Book}}]
\label{theorem:K-stability-smooth}
Let $X$ be a smooth Fano threefold from class $(\beth)$. Then $X$ is K-polystable.
\end{theorem}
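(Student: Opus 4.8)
The strategy is to run the Abban--Zhuang method $G$-equivariantly and to prove that the $G$-invariant stability threshold satisfies $\delta_{G}(X)>1$; since $G\cong\mumu_{2}^{3}$ is reductive, this forces $X$ to be $G$-equivariantly K-stable, hence K-polystable (see \cite{Book}, building on \cite{Fujita2019,Li}). By the Abban--Zhuang theory \cite{AbbanZhuang,Book} one has $\delta_{G}(X)=\inf_{P\in X}\delta_{G,P}(X)$, and for each point $P$ the quantity $\delta_{G,P}(X)\geqslant\delta_{P}(X)$ can be estimated by choosing a flag $X\supset S\supset C\ni P$ and bounding the local invariants $S_{X}(S)$, $S(W^{S}_{\bullet,\bullet};C)$, $S(W_{\bullet,\bullet,\bullet}^{S,C};P)$ exactly as in the proof of Proposition~\ref{proposition:delta-dP4}. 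The $G$-invariant prime divisors on $X$ itself cause no trouble: by Corollary~\ref{corollary:divisorial-stability}, $\beta(S)>0$, i.e. $A_{X}(S)>S_{X}(S)$, for every $G$-invariant irreducible surface $S\subset X$.

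For a point $P\notin E_{1}\cup E_{2}$ we take $S$ to be a smooth member of the pencil $|H-E_{1}|$ (or of $|H-E_{2}|$) through $P$; all but finitely many members of this pencil are smooth del Pezzo surfaces of degree $6$, so Proposition~\ref{proposition:delta-dP4} yields $\delta_{P}(X)\geqslant\tfrac{104}{99}>1$. This covers, in particular, every point lying on a fibre of the conic bundle $\eta$ over $(\mathbb{P}^{1}\times\mathbb{P}^{1})\setminus\Delta$ and off $E_{1}\cup E_{2}$; by Remark~\ref{remark:conic-bundle-1} (and since a direct check shows that for smooth $X$ the two $G$-fixed points of $\mathbb{P}^{1}\times\mathbb{P}^{1}$ avoid $\Delta$), it covers all but two points of each of the two $G$-invariant conic-bundle fibres.

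It remains to treat $P\in E_{1}\cup E_{2}$; as $G$ interchanges the disjoint surfaces $E_{1}$ and $E_{2}$, we may take $P\in E_{1}$, and recall that $\eta$ restricts to an isomorphism $E_{1}\xrightarrow{\ \sim\ }\mathbb{P}^{1}\times\mathbb{P}^{1}$. Here we apply the Abban--Zhuang inequality along the flag $X\supset E_{1}\supset C\ni P$, with $C$ a suitable ruling of $E_{1}$: one computes the Zariski decomposition of $-K_{X}-uE_{1}$ over the pseudo-effective interval, restricts it to $E_{1}$, and evaluates $S(W^{E_{1}}_{\bullet,\bullet};C)$ and $S(W_{\bullet,\bullet,\bullet}^{E_{1},C};P)$; this gives $\delta_{P}(X)>1$ for all $P$ outside the finitely many curves in $E_{1}$ fixed by the one-parameter subgroups of $\operatorname{Aut}(X)$ that appear when $a=0$ or $b=0$. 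On such a curve $\Gamma$ the plain local threshold may drop to $\delta_{P}(X)=1$, so there one must use that $\Gamma$ is not $G$-invariant: its $G$-orbit is $\Gamma\subset E_{1}$ together with its image in $E_{2}$, and a $G$-equivariant Abban--Zhuang computation along the $G$-invariant flag supported on $E_{1}\sqcup E_{2}$ shows that every $G$-invariant valuation centred over $G\cdot P$ has $A_{X}>S_{X}$, so $\delta_{G,P}(X)>1$. Combining the three cases gives $\delta_{G}(X)>1$, and hence $X$ is K-polystable.

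The delicate step is the last one. The del Pezzo estimate of Proposition~\ref{proposition:delta-dP4} is tailored to points off $E_{1}\cup E_{2}$ and genuinely fails on the exceptional surfaces: for the most symmetric members of class $(\beth)$ --- e.g. $a=b=0$, where $\operatorname{Aut}(X)$ contains a two-dimensional torus --- one has $\delta_{P}(X)=1$ at certain points of $E_{1}\cup E_{2}$, coming from the weight valuations of those $\mathbb{C}^{\ast}$-actions. No flag built from a non-$G$-invariant surface can beat $1$ at such a point, so the strict bound $\delta_{G,P}(X)>1$ has to be extracted from the $G$-equivariance itself (the offending valuations are not $G$-invariant); carrying out the Zariski decompositions on $E_{1}$ and $E_{2}$ while tracking the $G$-orbit structure is the technical core of the argument.
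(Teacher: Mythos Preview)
Your overall strategy---reduce to $G$-invariant valuations and then bound local thresholds---is correct in spirit, but you are missing the one observation that makes the paper's proof short, and as a result the step you flag as ``the technical core'' is both unnecessary and not actually carried out.

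The paper argues by contradiction via the $G$-equivariant valuative criterion \cite{Zhuang}: if $X$ is not K-polystable there is a $G$-invariant prime divisor $\mathbf{F}$ over $X$ with $\beta(\mathbf{F})\leqslant 0$, and one studies its center $Z\subset X$, which is a $G$-invariant irreducible subvariety. Now the key point: since $\tau_{2}\in G$ swaps the \emph{disjoint} surfaces $E_{1}$ and $E_{2}$, no $G$-invariant irreducible subvariety of $X$ can lie in $E_{1}\cup E_{2}$. So centers inside $E_{1}\cup E_{2}$ simply do not occur, and the analysis you call ``delicate'' is never needed. Combined with Lemma~\ref{lemma:G-action} (no $G$-fixed points on a smooth $X$) and Corollary~\ref{corollary:divisorial-stability}, the center $Z$ must be a $G$-invariant curve with $Z\not\subset E_{1}\cup E_{2}$. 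The paper then intersects $Z$ with a \emph{general} (hence smooth) member of $|H-E_{1}|$: either this produces a point of $Z\setminus(E_{1}\cup E_{2})$ where Proposition~\ref{proposition:delta-dP4} gives $\delta_{P}(X)>1$, a contradiction, or $(H-E_{i})\cdot Z=0$ for both $i$, forcing $\eta(Z)$ to be one of the two $G$-fixed points of $\mathbb{P}^{1}\times\mathbb{P}^{1}$. For smooth $X$ the corresponding rulings meet $\Delta$ in two distinct points, so the unique member of $|H-E_{1}|$ containing $Z$ is smooth, and Proposition~\ref{proposition:delta-dP4} applied at a general point of $Z$ again gives the contradiction.

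By contrast, your plan to bound $\delta_{G,P}(X)$ at \emph{every} $P$ runs into two problems. First, your treatment of $P\in E_{1}\cup E_{2}$ is only a sketch: you do not perform the Zariski decomposition of $-K_{X}-uE_{1}$ or the promised $G$-equivariant flag computation, and you concede that $\delta_{P}(X)$ can drop to $1$ there, so this step is a genuine gap. Second, even off $E_{1}\cup E_{2}$ your argument is incomplete: the pencils $|H-E_{1}|$ and $|H-E_{2}|$ are base-point-free morphisms to $\mathbb{P}^{1}$, so through a fixed $P$ there is a \emph{unique} member of each, and both may be singular---``all but finitely many members are smooth'' does not help at the bad points. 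The paper sidesteps this by working with the curve $Z$ and a general member of the pencil, rather than with a prescribed point $P$.
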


We begin with a technical lemma that will also be useful in the singular case.

\begin{lemma}
\label{lemma:G-invariant-center}
Suppose that $X$ has at worst isolated singularities.
Let $\mathbf{F}$ be a~$G$-invariant prime divisor $\mathbf{F}$ over the~threefold $X$ with
$$
\beta(\mathbf{F})=A_X(\mathbf{F})-S_X(\mathbf{F})\leqslant 0,
$$
and let $Z$ be its center on $X$. Suppose that $Z$ is not a singular point of the~threefold $X$. Then $Z$ is an irreducible curve with $\eta(Z)=([1:1],[1:1])$ or $\eta(Z)=([1:-1],[1:-1])$.
\end{lemma}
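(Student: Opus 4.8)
The plan is to pin down $Z$ by elimination, using the results proved earlier in this section. Since $\mathbf{F}$ is a $G$-invariant valuation, its center $Z=C_X(\mathbf{F})$ satisfies $g(Z)=C_X(g\cdot\mathbf{F})=Z$ for every $g\in G$, so $Z$ is $G$-invariant; it is also irreducible, being the center of a divisorial valuation. I will first rule out the possibilities that $Z$ is a surface or a point, and then analyse the image $\eta(Z)$.

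If $Z$ were a prime divisor on $X$, then $\mathbf{F}=\mathrm{ord}_Z$, since the proper transform of $Z$ is the only prime divisor over $X$ with center $Z$; as the singularities of $X$ are isolated we have $Z\not\subset\mathrm{Sing}(X)$, so $A_X(\mathbf{F})=1$ and hence $\beta(\mathbf{F})=\beta(Z)$. But $Z$ is then an irreducible $G$-invariant surface, so $\beta(Z)>0$ by Corollary~\ref{corollary:divisorial-stability}, contradicting $\beta(\mathbf{F})\leqslant 0$. If $Z$ were a point, it would be a $G$-fixed point of $X$; by Lemma~\ref{lemma:G-action} the only possible $G$-fixed point of $X$ is $\mathrm{Sing}(X)$, contradicting the hypothesis that $Z$ is not a singular point. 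Thus $Z$ is an irreducible curve. Furthermore $Z\not\subset E_1\cup E_2$: the conics $C_1,C_2$ are disjoint, so $E_1\cap E_2=\varnothing$, whereas $\tau_2\in G$ interchanges $E_1$ and $E_2$, so an irreducible $G$-invariant curve contained in $E_1\cup E_2$ would be forced into $E_1\cap E_2=\varnothing$.

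Now $\eta(Z)$ is an irreducible $G$-invariant closed subset of $\mathbb{P}^1\times\mathbb{P}^1$, so $\dim\eta(Z)\in\{0,1\}$. If $\dim\eta(Z)=0$, then $\eta(Z)$ is a $G$-fixed point of $\mathbb{P}^1\times\mathbb{P}^1$, hence one of $([1:1],[1:1])$ or $([1:-1],[1:-1])$ by Remark~\ref{remark:G-action-P1-P1}, which is the desired conclusion. The main obstacle is to exclude the case $\dim\eta(Z)=1$, and here is how I plan to do it. Suppose $\eta(Z)=B$ is an irreducible $G$-invariant curve in $\mathbb{P}^1\times\mathbb{P}^1$. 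Since $\tau_2$ swaps the two rulings, $B$ must have bidegree $(d,d)$ with $d\geqslant 1$ — an irreducible curve of bidegree $(d,0)$ or $(0,d)$ is a single ruling, which is not $\tau_2$-invariant — so both projections $B\to\mathbb{P}^1$ are dominant. Recall that the members of $|H-E_1|$ are the fibres of the first projection $X\to\mathbb{P}^1$ of the conic bundle $\eta$, and that the general such fibre is a smooth del Pezzo surface of degree $6$. Choosing $P\in Z$ general, the irreducibility of $Z$ together with $Z\not\subset E_1\cup E_2$ gives $P\notin E_1\cup E_2$, while the fact that $Z\to B\to\mathbb{P}^1$ is dominant makes the image of $P$ in the first $\mathbb{P}^1$ general; hence $P$ lies on a smooth surface $S\in|H-E_1|$ with $P\notin E_1\cup E_2$. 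Proposition~\ref{proposition:delta-dP4} then yields $\delta_P(X)\geqslant\frac{104}{99}>1$. On the other hand, since $P\in Z=C_X(\mathbf{F})$ and $\beta(\mathbf{F})\leqslant 0$ (so $A_X(\mathbf{F})\leqslant S_X(\mathbf{F})$), we get $\delta_P(X)\leqslant A_X(\mathbf{F})/S_X(\mathbf{F})\leqslant 1$, a contradiction. Hence $\dim\eta(Z)=1$ cannot occur, and the lemma follows. The only subtlety is that the chosen point $P\in Z$ must satisfy all the genericity requirements simultaneously, but each of them excludes only a finite subset of the irreducible curve $Z$, so such a $P$ exists.
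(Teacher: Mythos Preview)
Your proof is correct and follows essentially the same approach as the paper: rule out the surface case via Corollary~\ref{corollary:divisorial-stability}, rule out the point case via Lemma~\ref{lemma:G-action}, observe $Z\not\subset E_1\cup E_2$, and then use Proposition~\ref{proposition:delta-dP4} on a general smooth member of $|H-E_1|$ to force $\eta(Z)$ to be a point. The only cosmetic difference is that the paper shows $(H-E_1)\cdot Z=0$ and $(H-E_2)\cdot Z=0$ separately, while you use the $\tau_2$-invariance of $\eta(Z)$ to conclude it has bidegree $(d,d)$ and then only argue with the first projection; both routes yield the same contradiction.
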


\begin{proof}
Note first that $Z$ is a~$G$-invariant curve, by Lemma~\ref{lemma:G-action} and Corollary~\ref{corollary:divisorial-stability}, and
$$
\delta_P(X)\leqslant 1
$$
for every point $P\in Z$. Observe also that $Z\not\subset E_1\cup E_2$, because the~surfaces $E_1$ and $E_2$ are disjoint and swapped by the~action of the~group $G$.

Let $S$ be any surface in $|H-E_1|$.
If $S\cdot Z\ne 0$, then choosing $S$ to be general enough,
we see that $S$ is a~smooth del Pezzo surface and $S\cap Z$ contains a~point $P\not\in E_1\cup E_2$.
This~would give $\delta_P(X)>1$ by Proposition~\ref{proposition:delta-dP4}, which is a~contradiction.
So we have
$$
\big(H-E_1\big)\cdot Z=0.
$$
Similarly, we get $(H-E_2)\cdot Z=0$.
Therefore, we conclude that $\eta(Z)$ is a~point in $\mathbb{P}^1\times\mathbb{P}^1$. Finally, using Remark~\ref{remark:G-action-P1-P1} we get $\eta(Z)=([1:1],[1:1])$ or $\eta(Z)=([1:-1],[1:-1])$.
\end{proof}

\begin{proof}[Proof of Theorem \ref{theorem:K-stability-smooth}]
Suppose for a contradiction that the Fano threefold $X$ is smooth, but $X$ is not K-polystable.
It follows from \cite{Fujita2019,Li,Zhuang}~that there exists a~$G$-invariant prime divisor $\mathbf{F}$ over $X$
such that $ \beta(\mathbf{F})\leqslant 0$.
Let $Z$ be the center of $\mathbf{F}$ on $X$. Then by Lemma~\ref{lemma:G-invariant-center}, the center $Z$ is a curve with $\eta(Z)=([1:1],[1:1])$ or $\eta(Z)=([1:-1],[1:-1])$.

Let $S$ be the~unique surface in the~pencil $|H-E_1|$ that contains $Z$, and let $\ell=\eta(S)$.
If $\eta(Z)=([1:\pm 1],[1:\pm 1])$, then $\ell=\{u_1\pm v_1=0\}$.
Moreover, one can check that
\begin{itemize}
\item $\{u_1-v_1=0\}$ intersects $\Delta$ in a~single point $\iff$  $b+a\pm 1=0$,
\item $\{u_1+v_1=0\}$ intersects $\Delta$ in a~single point $\iff$  $b-a\pm 1=0$.
\end{itemize}
Since $Q$ is smooth by assumption, we have $b\pm a\pm 1\ne 0$,
and $\ell$ intersects the~discriminant curve $\Delta$ in two distinct points.
This implies that $S$ is a~smooth del Pezzo surface.

Now, applying Proposition~\ref{proposition:delta-dP4}, we get $\delta_P(X)>1$ for a~general point $P$ in the~curve $Z$,
which contradicts $\beta(\mathbf{F})\leqslant 0$. This completes the~proof of Theorem~\ref{theorem:K-stability-smooth}.
\end{proof}

\subsection{Threefolds with one singular point.}
\label{section:beth-singular}

We continue to use the notation introduced at the beginning of this section.
In this subsection, we will  prove the~following result.

\begin{theorem}
\label{theorem:K-stability-isolated}
Suppose that $X$ is a threefold from class $(\beth)$ with one singular point. Then $X$ is K-polystable.
\end{theorem}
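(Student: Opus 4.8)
The plan is to mimic the strategy from the smooth case (Theorem~\ref{theorem:K-stability-smooth}), but now accounting for the presence of the ordinary double point $O=\mathrm{Sing}(X)$. Suppose for contradiction that $X$ is not K-polystable. By the valuative criterion for equivariant K-stability (as in \cite{Fujita2019,Li,Zhuang}), there is a $G$-invariant prime divisor $\mathbf{F}$ over $X$ with $\beta(\mathbf{F})\leqslant 0$. Let $Z$ be its center on $X$. There are two cases: either $Z=O$, or $Z$ is not the singular point, in which case Lemma~\ref{lemma:G-invariant-center} applies and $Z$ is a $G$-invariant curve with $\eta(Z)=([1:1],[1:1])$ or $([1:-1],[1:-1])$.

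First I would handle the case $Z=O$. Here I would use Lemma~\ref{lemma:ODP-G-action}: let $\varphi\colon\widetilde{X}\to X$ be the blow-up of $O$ with exceptional surface $F$, which is a smooth quadric surface $\cong\mathbb{P}^1\times\mathbb{P}^1$, on which $G$ acts faithfully with $\mathrm{rk}\,\mathrm{Pic}^G(F)=1$ and no $G$-fixed points. Since $\mathbf{F}$ is $G$-invariant with center $O$, a refinement of the Abban--Zhuang method applied to the flag $O\in F\subset\widetilde{X}$ (or directly estimating $\delta_O(X)$ via the divisor $F$ and $G$-invariant curves on it) should yield $\delta_O(X)>1$, contradicting $\beta(\mathbf{F})\leqslant 0$. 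The key inputs are that $\beta(F)>0$ (computable via a Zariski-decomposition analysis of $\varphi^*(-K_X)-uF$ analogous to the computation in Lemma~\ref{lemma:K-unstable}), and that the absence of $G$-fixed points on $F$ forces any deeper $G$-invariant center to be a curve on $F$ of the controlled type.

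For the case where $Z$ is a curve with $\eta(Z)=([1:\pm1],[1:\pm1])$, I would proceed exactly as in the proof of Theorem~\ref{theorem:K-stability-smooth}: let $S$ be the unique surface in $|H-E_1|$ containing $Z$ and $\ell=\eta(S)=\{u_1\mp v_1=0\}$. The subtlety is that now $X$ is singular, so I must check that $S$ is still a smooth del Pezzo surface of degree $6$ and that $Z$ contains a point $P\notin E_1\cup E_2$ at which Proposition~\ref{proposition:delta-dP4} applies. Using Remark~\ref{remark:conic-bundle-1}, when $a\pm b\pm1=0$ with $ab\neq0$ the discriminant curve $\Delta$ is singular precisely at one of these two points $\eta(Z)$; I would argue that the singular point $O$ of $X$ lies over $\eta(Z)$, so the \emph{other} point $([1:\mp1],[1:\mp1])$—around which one can also build a surface $S'\in|H-E_1|$—gives a smooth del Pezzo, or else show directly that even the $S$ through the bad point, minus its singular locus, still lets Proposition~\ref{proposition:delta-dP4} go through at a general $P\in Z$. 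Either way one gets $\delta_P(X)>1$, a contradiction.

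The main obstacle I anticipate is the case $Z=O$: the Abban--Zhuang estimate at a singular point requires care, since $(-K_X)^3$, intersection numbers on $\widetilde{X}$, and the Zariski chamber structure of $\varphi^*(-K_X)-uF-vS_i$ all differ from the smooth picture, and one must exploit $\mathrm{rk}\,\mathrm{Pic}^G(F)=1$ and the lack of $G$-fixed points on $F$ to rule out the last level of the flag. Getting the numerics right there—showing the relevant $S$-invariants stay below~$1$—is where the real work lies; the curve case is a routine adaptation of the smooth argument once the del Pezzo surface $S$ is confirmed smooth away from the obstruction.
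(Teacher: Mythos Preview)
Your overall architecture matches the paper's: split into the case $Z=O$ and the case where $Z$ is a $G$-invariant curve, and in the latter reduce to Proposition~\ref{proposition:delta-dP4} via a smooth del Pezzo surface $S\in|H-E_1|$ containing $Z$. The $Z=O$ outline is essentially correct; the paper (Lemma~\ref{lemma:K-stability-isolated-2}) carries this out by computing $S_X(F)=\tfrac{99}{52}<2$ via a further blow-up of the proper transforms $\widetilde L_1,\widetilde L_2$ of the two lines through $O$ (needed to get the Zariski decomposition of $\varphi^*(-K_X)-uF$ for $u>1$), and then applies Abban--Zhuang to a $G$-invariant curve $\widehat Z\subset\widehat F$, using $\mathrm{rk}\,\mathrm{Pic}^G(\widehat F)=2$ to bound $\widehat Z$ from below by a fiber class.

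The curve case, however, has a genuine gap. Both of your suggested escapes fail: you cannot pass to ``the other point'' $([1:\mp1],[1:\mp1])$, because the surface $S$ must contain the fixed curve $Z$, and the surface in $|H-E_1|$ through $Z$ is determined by $\eta(Z)$; and Proposition~\ref{proposition:delta-dP4} genuinely requires $S$ smooth, which fails when $\eta(S)=\{u_1-v_1=0\}$ (this line meets $\Delta$ in a single point precisely because $a+b-1=0$). The paper's resolution is cleaner and avoids both difficulties: the fiber of $\eta$ over $([1:1],[1:1])$ is the reducible conic $L_1+L_2$, where $\pi(L_1),\pi(L_2)$ are the two lines through $\mathrm{Sing}(Q)$ meeting both $C_1$ and $C_2$, and the involution $\tau_3$ swaps $L_1\leftrightarrow L_2$. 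Hence neither $L_1$ nor $L_2$ is $G$-invariant, so an irreducible $G$-invariant curve $Z$ mapping to a point under $\eta$ is forced to have $\eta(Z)=([1:-1],[1:-1])$. The surface $S=\{u_1+v_1=0\}$ through that fiber \emph{is} smooth (since $b-a\pm1\ne0$ when $b=1-a$ and $a\notin\{0,1\}$), and Proposition~\ref{proposition:delta-dP4} then applies directly. This observation about $L_1,L_2$ is what you are missing.
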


Suppose that the~threefold $X$ has one singular point. Then $a\pm b\pm 1=0$ and $ab\ne 0$.
Up to a~change of coordinates, we may assume that $b=1-a$ and $a\not\in\{0,1\}$.

Let $O=\mathrm{Sing}(X)$. Recall  that
$$
X=\big\{u_1x=v_1y,u_2z=v_2t,w^2+xy+zt=a(xt+yz)+(1-a)(xz+yt)\big\}\subset\mathbb{P}^1\times\mathbb{P}^1\times\mathbb{P}^4,
$$
and $O=([1:1],[1:1],[1:1:1:1:0])$. Note that $O$ is an isolated threefold node ($\mathrm{cA}_1$) singularity. Similarly, we have
$$
Q=\big\{w^2+xy+zt=a(xt+yz)+(1-a)(xz+yt)\big\}\subset\mathbb{P}^4,
$$
and $\mathrm{Sing}(Q)=\pi(O)=[1:1:1:1:0]$. Note that $\eta(O)=([1:1],[1:1])$.

The fiber of the~conic bundle $\eta$ over the~point $([1:1],[1:1])$ is a~reducible reduced conic $L_1+L_2$,
where $L_1$ and $L_2$ are two smooth rational curves such that $L_1\cap L_2=O$,
and $\pi(L_1)$ and $\pi(L_2)$ are the~only lines in $Q$ that intersect  $C_1$ and $C_2$.
We have
\begin{align*}
\pi(L_1)&=\{x=y,z=t,w+x-z=0\},\\
\pi(L_2)&=\{x=y,z=t,w-x+z=0\}.
\end{align*}
We will use the~curves $L_1$ and $L_2$ in the~proofs of the~next two lemmas.

\begin{lemma}
\label{lemma:K-stability-isolated-1}
Let $\mathbf{F}$ be a~$G$-invariant prime divisor $\mathbf{F}$ over the~threefold $X$ with $\beta(\mathbf{F})\leqslant 0$,
and let $Z$ be its center on the~threefold $X$. Then~$Z=O$.
\end{lemma}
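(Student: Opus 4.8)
The plan is to eliminate every possible center other than $O=\mathrm{Sing}(X)$. Since $\mathbf{F}$ is $G$-invariant, so is its center $Z$. First I would dispose of the case where $Z$ is a surface: then $\mathbf{F}$ coincides with $Z$, and $\beta(\mathbf{F})=\beta(Z)>0$ by Corollary~\ref{corollary:divisorial-stability}, contradicting $\beta(\mathbf{F})\leqslant 0$. If $Z$ is a point, then being $G$-invariant it is a $G$-fixed point, hence $Z=O$ by Lemma~\ref{lemma:G-action}. So we may assume $Z$ is a $G$-invariant irreducible curve; in particular $Z$ is not the singular point of $X$, so Lemma~\ref{lemma:G-invariant-center} applies and gives $\eta(Z)=([1:1],[1:1])$ or $\eta(Z)=([1:-1],[1:-1])$.

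Next I would rule out $\eta(Z)=([1:-1],[1:-1])$ exactly as in the smooth case. Let $S$ be the unique member of the pencil $|H-E_1|$ containing $Z$, so that $\ell:=\eta(S)=\{u_1+v_1=0\}$. Since we are in the normal form with $b=1-a$ and $a\notin\{0,1\}$, we have $b-a\pm 1=1-2a\pm 1\neq 0$, so a direct check shows that $\ell$ meets the discriminant curve $\Delta$ in two distinct points, whence $S$ is a smooth del Pezzo surface of degree $6$ not passing through $O$. Because $E_1$ and $E_2$ are disjoint and interchanged by $G$, the $G$-invariant irreducible curve $Z$ is not contained in $E_1\cup E_2$, so a general point $P\in Z$ satisfies $P\notin E_1\cup E_2$. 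Then Proposition~\ref{proposition:delta-dP4} gives $\delta_P(X)\geqslant\frac{104}{99}>1$, while $\beta(\mathbf{F})\leqslant 0$ together with $P\in Z$ gives $\delta_P(X)\leqslant A_X(\mathbf{F})/S_X(\mathbf{F})\leqslant 1$ — a contradiction.

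Finally, for $\eta(Z)=([1:1],[1:1])$ the curve $Z$ lies in the fiber of $\eta$ over $([1:1],[1:1])$, which is the reduced conic $L_1+L_2$; hence the irreducible curve $Z$ equals $L_1$ or $L_2$. But $\tau_3$ acts trivially on $\mathbb{P}^1\times\mathbb{P}^1$, so it preserves this fiber, and since $\tau_3$ interchanges $\pi(L_1)=\{x=y,z=t,w+x-z=0\}$ with $\pi(L_2)=\{x=y,z=t,w-x+z=0\}$, it interchanges $L_1$ and $L_2$; therefore neither is $G$-invariant, a contradiction. Thus $Z=O$. I expect this last case to be the main obstacle: here the surface $S\in|H-E_1|$ through $Z$ has image $\{u_1-v_1=0\}$, which passes through the singular point $([1:1],[1:1])$ of $\Delta$ (see Remark~\ref{remark:conic-bundle-1}), so $S$ is singular at $O$ and Proposition~\ref{proposition:delta-dP4} is unavailable; one must instead exploit the explicit description of the degenerate fiber $L_1+L_2$ and the symmetry $\tau_3$ to exclude $Z$.
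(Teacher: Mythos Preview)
Your proof is correct and follows essentially the same approach as the paper: reduce to a $G$-invariant curve via Lemma~\ref{lemma:G-invariant-center}, rule out the fiber over $([1:1],[1:1])$ because $\tau_3$ swaps $L_1$ and $L_2$, and rule out the fiber over $([1:-1],[1:-1])$ by applying Proposition~\ref{proposition:delta-dP4} on the smooth surface $S=\{u_1+v_1=0\}\in|H-E_1|$. The only cosmetic difference is that you unpack the surface and point cases explicitly before invoking Lemma~\ref{lemma:G-invariant-center} (which already handles them internally), and the paper treats the two fibers in the opposite order.
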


\begin{proof}
Suppose that $Z\ne O$. Applying Lemma \ref{lemma:G-invariant-center},
we find that $Z$ is an irreducible curve such that $\eta(Z)=([1:1],[1:1])$ or $\eta(Z)=([1:-1],[1:-1])$.
But $Z\ne L_1$ and $Z\ne L_2$, since neither of the~curves $L_1$ nor $L_2$ is $G$-invariant, so  $\eta(Z)=([1:-1],[1:-1])$.

Let $S$ be the~surface in $|H-E_1|$ that is cut out on $X$ by the~equation $u_1+v_1=0$.
Then $S$ is smooth and $Z\subset S$,
so $\delta_P(X)>1$ for a~general point $P\in Z$ by Proposition~\ref{proposition:delta-dP4}.
This contradicts $\beta(\mathbf{F})\leqslant 0$.
\end{proof}

\begin{lemma}
\label{lemma:K-stability-isolated-2}
Let $\mathbf{F}$ be any $G$-invariant prime divisor $\mathbf{F}$ over $X$ such that $C_X(\mathbf{F})=O$.
Then $\beta(\mathbf{F})>0$.
\end{lemma}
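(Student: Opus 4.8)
The plan is to show that any $G$-invariant prime divisor $\mathbf{F}$ over $X$ whose center is the node $O$ has $\beta(\mathbf{F})>0$, by reducing to the Abban–Zhuang inequality on the exceptional surface $F$ of the blow-up $\varphi\colon\widetilde X\to X$ of $O$. First I would invoke Lemma~\ref{lemma:ODP-G-action}, which tells us that $\varphi$ is $G$-equivariant, that $G$ acts faithfully on $F$ with $\mathrm{rk}\,\mathrm{Pic}^G(F)=1$, and that $F$ has no $G$-fixed points; here $F$ is the smooth quadric surface $\mathbb{P}^1\times\mathbb{P}^1$. By Zhuang's equivariant $\delta$-criterion together with the Abban–Zhuang filtration, to bound $\beta(\mathbf{F})$ from below for divisors centered at $O$ it suffices to run the flag $O\in F$ and to verify that the relevant $S_X(F)$ and the refined invariants $S(W^F_{\bullet,\bullet};C)$ (for a $G$-invariant curve $C\subset F$) are strictly less than $1$ — since $F$ has no $G$-fixed point, the point-contribution $S(W^{F,C}_{\bullet,\bullet,\bullet};P)$ term is not needed once we show $\delta^G(F)$-type quantities are large enough.

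The computational core is the Zariski-decomposition analysis of $\varphi^*(-K_X)-uF$ on $\widetilde X$. As in the proof of Lemma~\ref{lemma:K-unstable}, I would write $\varphi^*(-K_X)\sim 3(H_1+H_2)+3F-E_1-E_2$ in suitable notation (using that $O$ lies on the fiber $L_1+L_2$ of $\eta$), determine the pseudoeffective threshold for $F$, locate the single wall where the negative part $\tfrac{u-1}{2}(S_1+S_2)$ switches on (the proper transforms of the two planes through $C_1$, $C_2$ meeting at $O$), and compute $(P(u))^3$ piecewise. Integrating gives $S_X(F)$; the intersection numbers are the same ones listed after that lemma ($F^3=2$, $H_i\cdot F^2=-1$, $E_i^3=-4$, etc.), so this is bookkeeping. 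Then $P(u)|_F$ is an effective $\mathbb{R}$-divisor class on $F\cong\mathbb{P}^1\times\mathbb{P}^1$; I would take $C$ to be a $G$-invariant curve of type $(1,1)$ (a hyperplane section of the quadric $F$, which exists $G$-invariantly since the two rulings are swapped by $G$) and carry out the second Zariski decomposition of $P(u)|_F-vC$ on the surface, producing $S(W^F_{\bullet,\bullet};C)$ by a double integral. The conclusion is $\beta(\mathbf{F})=A_X(\mathbf{F})-S_X(\mathbf{F})>0$ for all such $\mathbf{F}$.

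The main obstacle I anticipate is organizing the surface-level computation correctly: on $F\cong\mathbb{P}^1\times\mathbb{P}^1$ the restriction $P(u)|_F$ has bidegree depending on $u$ in a piecewise-linear way, and subtracting $vC$ with $C$ of type $(1,1)$ requires tracking when $P(u)|_F-vC$ leaves the nef cone (hitting one of the two rulings) and identifying the negative part correctly — there will be several sub-chambers in the $(u,v)$-square, and one must be careful that the curve $C$ chosen is not itself a component of any negative part (which it is not, being ample on $F$). A secondary subtlety is confirming that $A_{\widetilde X}(F)=3$ (so the generalized Abban–Zhuang constant is computed against the right log-discrepancy) and that the adjunction/residue terms $N'(u)|_C$ genuinely vanish because $P\notin\mathrm{Supp}(N(u))$ for a general — equivalently any, since there is no $G$-fixed point — point of $C$. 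Once these are pinned down, the required inequalities $S_X(F)<1$ and $S(W^F_{\bullet,\bullet};C)<1$ follow from elementary integrals, completing the proof of the lemma.
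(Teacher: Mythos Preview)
Your overall strategy (Abban--Zhuang on the exceptional surface of the blow-up of the node) is correct, but two concrete errors would derail the computation.

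First, the log discrepancy: since $O$ is an ordinary threefold node, the blow-up satisfies $K_{\widetilde X}=\varphi^*K_X+F$, so $A_X(F)=2$, not $3$. The paper indeed computes $S_X(F)=\tfrac{99}{52}$, giving $\beta(F)=2-\tfrac{99}{52}=\tfrac{5}{52}$; with your $A=3$ the arithmetic and subsequent inequalities would be off.

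Second, and more seriously, you are importing the Zariski-decomposition template from Lemma~\ref{lemma:K-unstable} (the rank-$3$ quadric cone), including the expression $3(H_1+H_2)+3F-E_1-E_2$ and a single wall where $\tfrac{u-1}{2}(S_1+S_2)$ appears. That geometry is different here: $Q$ has an isolated node, and the correct pullback is $\varphi^*(-K_X)\sim 3\widetilde H-\widetilde E_1-\widetilde E_2$. The key obstruction you miss is that for $u>1$ the \emph{curves} $\widetilde L_1,\widetilde L_2$ (proper transforms of the two lines through $O$, the components of the fiber $L_1+L_2$) satisfy $(\varphi^*(-K_X)-uF)\cdot\widetilde L_i=1-u<0$. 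There is no divisorial Zariski decomposition on $\widetilde X$ beyond this point; the paper performs a \emph{second} blow-up $\psi\colon\widehat X\to\widetilde X$ along $\widetilde L_1\cup\widetilde L_2$ with exceptional divisors $G_1,G_2$. On $\widehat X$ the decomposition has \emph{two} walls: for $u\in[1,2]$ the negative part is $(u-1)(G_1+G_2)$, and for $u\in[2,3]$ it is $(u-1)(G_1+G_2)+(u-2)(\widehat S_1+\widehat S_2)$.

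Consequently the surface-level step is carried out not on $F\cong\mathbb P^1\times\mathbb P^1$ but on its proper transform $\widehat F$, a smooth del Pezzo surface of degree~$6$ obtained by blowing up the two points $\widetilde L_i\cap F$. The $G$-invariant curve used is a general fiber $\mathbf f$ of the $G$-minimal conic bundle $\widehat F\to\mathbb P^1$, and one computes $S(W^{\widehat F}_{\bullet,\bullet};\mathbf f)=\tfrac{29}{52}<1$. Your proposal to use a $(1,1)$-curve on $F$ and a single-wall analysis cannot capture the contribution of $G_1,G_2$ and would not yield the correct restricted positive parts $P(u)\vert_{\widehat F}$.
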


\begin{proof}
Let $\varphi\colon\widetilde{X}\to X$ and $\phi\colon\widetilde{Q}\to Q$ be blow ups of the~points $O$ and $\pi(O)$, respectively.
As~in the~proof of Lemma~\ref{lemma:K-unstable}, we have the~following $G$-equivariant commutative diagram:
$$
\xymatrix{
&\widetilde{X}\ar@{->}[dl]_{\varphi}\ar@{->}[dr]^{\varpi}\ar@/^2.5pc/@{->}[ddrr]^{\nu}&\\%
X\ar@{->}[dr]_{\pi}&&\widetilde{Q}\ar@{->}[dl]^{\phi}\ar@{->}[dr]^{\upsilon}&\\%
&{Q}&&\mathbb{P}^1\times\mathbb{P}^1}
$$
where $\varpi$ is the~blow up of the~proper transforms of the~conics $C_1$ and $C_2$,
$\upsilon$ is a~$\mathbb{P}^1$-bundle, and $\nu$ is a~conic bundle.
Let $F$ be the~$\varphi$-exceptional~surface.
Then~$\beta(F)=2-S_X(F)$.

Let $\widetilde{H}=\varphi^*(H)$, let $\widetilde{E}_1$ and $\widetilde{E}_2$ be the~proper transforms on $\widetilde{X}$
of the~$\pi$-exceptional surfaces $E_1$ and $E_2$, respectively,
let $\widetilde{S}_1$ and $\widetilde{S}_2$ be the~proper transforms of the~quadric cones in $Q$ that contain $C_1$ and $C_2$, respectively.
We have $\varpi^*(-K_{X})\sim 3\widetilde{H}-\widetilde{E}_1-\widetilde{E}_2$.
We also have $\widetilde{S}_1\sim \widetilde{H}-\widetilde{E}_1-F$ and $\widetilde{S}_2\sim \widetilde{H}-\widetilde{E}_2-F$.
Take $u\in\mathbb{R}_{\geqslant 0}$. Then
\begin{equation}
\label{equation:ODP-F}
\varpi^*(-K_{X})-uF\sim_{\mathbb{R}} \frac{3}{2}\big(\widetilde{S}_1+\widetilde{S}_2\big)+\frac{1}{2}\big(\widetilde{E}_1+\widetilde{E}_2\big)+(3-u)F.
\end{equation}
Hence, intersecting $\varpi^*(-K_{X})-uF$ with a~sufficiently general fiber of the~morphism~$\nu$,
we see that  $\varpi^*(-K_{X})-uF$ is pseudoeffective $\iff$ $u\in[0,3]$. Moreover, we have
\begin{align*}
\widetilde{H}^3&=2, &  \widetilde{E}_1^3&=-4, & \widetilde{E}_2^3&=-4, &  F^3&=2, & \widetilde{E}_1\cdot \widetilde{H}^2&=0,\\
\widetilde{E}_2\cdot \widetilde{H}^2&=0, & \widetilde{E}_2\cdot \widetilde{E}_1^2&=0, & \widetilde{E}_1\cdot \widetilde{E}_2^2&=0, & F\cdot \widetilde{E}_1^2&=0, & F\cdot \widetilde{E}_2^2&=0,\\
\widetilde{E}_1\cdot F^2&=0, & \widetilde{E}_2\cdot F^2&=0, & \widetilde{H}\cdot F^2&=0, & F\cdot \widetilde{H}^2&=0, & \widetilde{H}\cdot \widetilde{E}_1^2&=-2,\\
\widetilde{H}\cdot \widetilde{E}_2^2&=-2, & \widetilde{E}_1\cdot \widetilde{E}_2\cdot \widetilde{H}&=0, & \widetilde{E}_1\cdot \widetilde{E}_2\cdot F&=0, & \widetilde{E}_1\cdot F\cdot \widetilde{H}&=0, & \widetilde{E}_2\cdot F\cdot \widetilde{H}&=0.
\end{align*}
Furthermore, the~divisor $\varpi^*(-K_{X})-uF$ is nef for $u\in[0,1]$. Thus, we have
\begin{multline*}
S_X(F)=\frac{1}{26}\int\limits_{0}^{1}\big(3\widetilde{H}-\widetilde{E}_1-\widetilde{E}_2-uF\big)^3du+\frac{1}{26}\int\limits_{1}^{3}\mathrm{vol}\big(\varpi^*(-K_{X})-uF\big)du=\\
=\frac{1}{26}\int\limits_{0}^{1}26-2u^3du+\frac{1}{26}\int\limits_{1}^{3}\mathrm{vol}\big(\varpi^*(-K_{X})-uF\big)du=\frac{51}{52}+\frac{1}{26}\int\limits_{1}^{3}\mathrm{vol}\big(\varpi^*(-K_{X})-uF\big)du.
\end{multline*}
If $\ell$ is the~preimage on $\widetilde{X}$ of a~general ruling of one of the~cones $\pi\circ\varphi(\widetilde{S}_1)$~and~$\pi\circ\varphi(\widetilde{S}_2)$,
then $(\varpi^*(-K_{X})-uF)\cdot \ell=2-u$, which shows that $\varpi^*(-K_{X})-uF$ is not nef for $u>2$.

Let $\widetilde{L}_1$ and $\widetilde{L}_2$ be the~proper transforms on $\widetilde{X}$ of the curves $L_1$ and $L_2$, respectively.
Then $\varpi^*(-K_{X})-uF$ is not nef for $u>1$, because
$$
\big(\varpi^*(-K_{X})-uF\big)\cdot \widetilde{L}_1=\big(\varpi^*(-K_{X})-uF\big)\cdot \widetilde{L}_2=1-u.
$$
Using \eqref{equation:ODP-F}, one can show that $\widetilde{L}_1$ and $\widetilde{L}_2$
are the~only curves in $\widetilde{X}$ that have negative intersection with $\varpi^*(-K_{X})-uF$ for $u\in(1,2]$.

Note that $\widetilde{L}_1\cong\widetilde{L}_2\cong\mathbb{P}^1$ and their normal bundles  are isomorphic to $\mathcal{O}_{\mathbb{P}^1}(-1)\oplus\mathcal{O}_{\mathbb{P}^1}(-1)$.
Let $\psi\colon\widehat{X}\to\widetilde{X}$ be the~blow up of $\widetilde{L}_1$ and $\widetilde{L}_2$,
let $G_1$ and $G_2$ be the~$\psi$-exceptional surfaces such that $\psi(G_1)=\widetilde{L}_1$ and $\psi(G_2)=\widetilde{L}_2$,
let $\widehat{H}$, $\widehat{E}_1$, $\widehat{E}_2$, $\widehat{S}_1$, $\widehat{S}_2$, $\widehat{F}$,
be the~proper transforms on the~threefold $\widehat{X}$ of the~surfaces  $\widetilde{H}$, $\widetilde{E}_1$, $\widetilde{E}_2$, $\widetilde{S}_1$, $\widetilde{S}_2$, $F$, respectively.
Then
$$
(\varpi\circ\psi)^*(-K_{X})\sim 3\widehat{H}-\widehat{E}_1-\widehat{E}_2.
$$
We have $\widehat{S}_1\sim \widehat{H}-\widehat{E}_1-\widehat{F}-G_1$ and $\widehat{S}_2\sim \widehat{H}-\widehat{E}_2-\widehat{F}-G_2$.
This gives
\begin{equation}
\label{equation:ODP-F-blow-up}
(\varpi\circ\psi)^*(-K_{X})-u\widehat{F}
\sim_{\mathbb{R}}
\frac{3}{2}\big(\widehat{S}_1+\widehat{S}_2\big)+\frac{1}{2}\big(\widehat{E}_1+\widehat{E}_2\big)+\frac{3}{2}\big(G_1+G_2\big)+(3-u)\widehat{F}.
\end{equation}
For  $u\in[0,3]$, the~Zariski decomposition of this divisor can be computed on $\widehat{X}$ as follows:
\begin{itemize}
\item if $u\in[0,1]$, then the~divisor $(\varpi\circ\psi)^*(-K_{X})-u\widehat{F}$ is nef,
\item if $u\in[1,2]$, then the~positive (nef) part of the~Zariski decomposition is
\begin{multline*}
(\varpi\circ\psi)^*(-K_{X})-u\widehat{F}-(u-1)(G_1+G_2)\sim_{\mathbb{R}}\\
\sim_{\mathbb{R}}3\widehat{H}-\widehat{E}_1-\widehat{E}_2-(u-1)(G_1+G_2)\sim_{\mathbb{R}}\\
\sim_{\mathbb{R}}\frac{3}{2}\big(\widehat{S}_1+\widehat{S}_2\big)+\frac{1}{2}\big(\widehat{E}_1+\widehat{E}_2\big)+\frac{5-2u}{2}\big(G_1+G_2\big)+(3-u)\widehat{F},
\end{multline*}
and the~negative part of the~Zariski decomposition is $(u-1)(G_1+G_2)$,
\item if $u\in(2,3]$, then the~positive part of the~Zariski decomposition is
\begin{multline*}
(\varpi\circ\psi)^*(-K_{X})-u\widehat{F}-(u-1)(G_1+G_2)-(u-2)(\widehat{S}_1+\widehat{S}_2)\sim_{\mathbb{R}}\\
\sim_{\mathbb{R}}3\widehat{H}-\widehat{E}_1-\widehat{E}_2-(u-1)(G_1+G_2)-(u-2)(\widehat{S}_1+\widehat{S}_2)\sim_{\mathbb{R}}\\
\sim_{\mathbb{R}}(7-2u)\widehat{H}-(3-u)\big(\widehat{E}_1+\widehat{E}_2+G_1+G_2\big)+(u-4)F\sim_{\mathbb{R}}\\
\sim_{\mathbb{R}}\frac{7-2u}{2}\big(\widehat{S}_1+\widehat{S}_2\big)+\frac{1}{2}\big(\widehat{E}_1+\widehat{E}_2\big)+\frac{5-2u}{2}\big(G_1+G_2\big)+(3-u)\widehat{F},
\end{multline*}
and the~negative part is $(u-1)(G_1+G_2)+(u-2)(\widehat{S}_1+\widehat{S}_2)$.
\end{itemize}
The intersections of the~divisors $\widehat{H}$, $\widehat{E}_1$, $\widehat{E}_2$, $\widehat{F}$, $G_1$, $G_2$ on $\widehat{X}$
can be computed as follows:
\begin{align*}
\widehat{H}^3&=2, & \widehat{E}_1^3&=-4, & \widehat{E}_2^3&=-4, & G_1^3&=2,\\
G_2^3&=2, & \widehat{F}^3&=2, & \widehat{H}\cdot \widehat{E}_1^2&=-2, & \widehat{H}\cdot \widehat{E}_2^2&=-2,\\
\widehat{F}\cdot G_1^2&=-1, & \widehat{F}\cdot G_2^2&=-1, & \widehat{E}_1\cdot G_1^2&=-1, & \widehat{E}_2\cdot G_1^2&=-1,\\
\widehat{E}_1\cdot G_2^2&=-1, & \widehat{E}_2\cdot G_2^2&=-1, & \widehat{H}\cdot G_1^2&=-1, & \widehat{H}\cdot G_2^2&=-1,
\end{align*}
and other triple intersections are zero. Now we are ready to compute $S_X(F)$. We have
\begin{multline*}
S_X(F)=\frac{1}{26}\int\limits_{0}^{1}\Big((\pi\circ\varpi)^*(-K_{X})-u\widehat{F}\Big)^3du+\frac{1}{26}\int\limits_{1}^{2}\Big(3\widehat{H}-\widehat{E}_1-\widehat{E}_2-(u-1)(G_1+G_2)\Big)^3du+\\
+\frac{1}{26}\int\limits_{2}^{3}\Big((7-2u)\widehat{H}-(3-u)\big(\widehat{E}_1+\widehat{E}_2+G_1+G_2\big)+(u-4)F\Big)^3du=\\
=\frac{1}{26}\int\limits_{0}^{1}26-2u^3du+\frac{1}{26}\int\limits_{1}^{2}24+6u-6u^2du+\frac{1}{26}\int\limits_{2}^{3}6(2-u)(4-u)du=\frac{99}{52}.
\end{multline*}
Thus, we see that $\beta(F)=\frac{5}{52}>0$.

Now, suppose that $\beta(\mathbf{F})\leqslant 0$.
Let $\widetilde{Z}$ be the center of the~divisor $\mathbf{F}$ on the~threefold $\widetilde{X}$.
Then $\widetilde{Z}$ is a $G$-invariant irreducible proper subvariety of the~surface~$F$,
since $\beta(F)>0$.

Recall from Lemma~\ref{lemma:ODP-G-action} that $G$ acts faithfully on $F$,
and $F$ does not have $G$-fixed points.
Hence, we conclude that $\widetilde{Z}$ is a~curve.
Let $\widehat{Z}$ be the~proper transform of this curve on~$\widehat{X}$.
Then $\widehat{Z}=C_{\widehat{X}}(\mathbf{F})$ and $\widehat{Z}\subset\widehat{F}$.
Let us apply Abban--Zhuang theory \cite{AbbanZhuang} to the~flag $\widehat{Z}\subset\widehat{F}$.

For every $u\in[0,3]$, let $P(u)$ be the~positive (nef) part of the~Zariski decomposition of the~divisor $(\varpi\circ\psi)^*(-K_{X})-u\widehat{F}$ (described above),
and let $N(u)$ be its negative part.
Observe that $\widehat{Z}\not\subset\mathrm{Supp}(N(u))$, because $\widehat{Z}$ is irreducible and $G$-invariant.
Set
$$
t(u)=\mathrm{sup}\Big\{v\in\mathbb{R}_{\geqslant 0}\ \big\vert\ \text{the~divisor  $P(u)\big\vert_{\widehat{F}}-v\widehat{Z}$ is pseudo-effective}\Big\}.
$$
For $v\in[0,t(u)]$, let $P(u,v)$ be the~nef part of the~Zariski decomposition of~$P(u)\vert_{\widehat{F}}-v\widehat{Z}$,
and let $N(u,v)$ be the~negative part of this~Zariski decomposition. Set
$$
S\big(W^{\widehat{F}}_{\bullet,\bullet};\widehat{Z}\big)=\frac{3}{26}\int\limits_0^3\int\limits_0^{t(u)}\big(P(u,v)\big)^2dvdu.
$$
Then it follows from \cite[Theorem 3.3]{AbbanZhuang} and \cite[Corollary 1.109]{Book} that
$$
1\geqslant\frac{A_X(\mathbf{F})}{S_X(\mathbf{F})}\geqslant\min\Bigg\{\frac{A_X(\widehat{F})}{S_X(\widehat{F})},\frac{1}{S\big(W^{\widehat{F}}_{\bullet,\bullet};\widehat{Z}\big)}\Bigg\}=\min\Bigg\{\frac{104}{99},\frac{1}{S\big(W^{\widehat{F}}_{\bullet,\bullet};\widehat{Z}\big)}\Bigg\}.
$$
Therefore, we conclude that $S(W^{\widehat{F}}_{\bullet,\bullet};\widehat{Z})\geqslant 1$.
Let us show that $S(W^{\widehat{F}}_{\bullet,\bullet};\widehat{Z})<1$.

The morphism $\psi$ induces a~$G$-equivariant birational morphism $\sigma\colon \widehat{F}\to F$
that blows up two points $\widetilde{L}_1\cap F$ and $\widetilde{L}_2\cap F$.
By Lemma~\ref{lemma:ODP-G-action}, $\mathrm{rk}\,\mathrm{Pic}^G(F)=1$, where $F\cong\mathbb{P}^1\times\mathbb{P}^1$.
Thus, we see that $\widehat{F}$ is a~smooth del Pezzo surface of degree $6$, $\mathrm{rk}\,\mathrm{Pic}^G(\widehat{F})=2$,
and there exists the~following $G$-equivariant diagram:
$$
\xymatrix{
&\widehat{F}\ar@{->}[dl]_{\sigma}\ar@{->}[dr]^{\xi}&\\%
F &&\mathbb{P}^1}
$$
where $\xi$ is a~$G$-minimal conic bundle with $2$ singular fibers.
Set $\mathbf{g}_1=G_1\vert_{\widehat{F}}$ and $\mathbf{g}_2=G_2\vert_{\widehat{F}}$.
Let $\mathbf{h}$ be a~curve on $F$ of degree $(1,1)$,
and let $\mathbf{f}$ be a~smooth fiber of the~morphism~$\xi$.
Then $\mathbf{f}\sim\sigma^*\big(\mathbf{h}\big)-\mathbf{g}_1-\mathbf{g}_2$. Since $\widehat{Z}$ is $G$-invariant, we have
$\widehat{Z}\sim n\mathbf{f}+m(\mathbf{g}_1+\mathbf{g}_2)$ for some non-negative integers $n$ and $m$.
Since $\widehat{Z}$ is irreducible, we have $n>0$, so that
$$
S\big(W^{\widehat{F}}_{\bullet,\bullet};\widehat{Z}\big)=\frac{3}{26}\int\limits_0^3\int\limits_0^{\infty}\mathrm{vol}\big(P(u)\vert_{\widehat{F}}-v\widehat{Z}\big)dvdu\leqslant\frac{3}{26}\int\limits_0^3\int\limits_0^{\infty}\mathrm{vol}\big(P(u)\vert_{\widehat{F}}-v\mathbf{f}\big)dvdu.
$$
Hence, to show that $S(W^{\widehat{F}}_{\bullet,\bullet};\widehat{Z})<1$, we may assume that $\widehat{Z}=\mathbf{f}$.

We have $G_1\vert_{\widehat{F}}=\mathbf{g}_1$, $G_2\vert_{\widehat{F}}=\mathbf{g}_2$,
$\widehat{F}\vert_{\widehat{F}}\sim\sigma^*\big(\mathbf{h}\big)\sim\mathbf{f}+\mathbf{g}_1+\mathbf{g}_2$ and $\widehat{H}\vert_{\widehat{F}}\sim \widehat{E}_1\vert_{\widehat{F}}\sim \widehat{E}_2\vert_{\widehat{F}}\sim 0$.
This gives
$$
P(u)\big\vert_{\widehat{F}}-v\mathbf{f}\sim_{\mathbb{R}}\left\{\aligned
&(u-v)\mathbf{f}+u\big(\mathbf{g}_1+\mathbf{g}_2\big)\ \text{if $0\leqslant u\leqslant 1$}, \\
&(u-v)\mathbf{f}+\mathbf{g}_1+\mathbf{g}_2\ \text{if $1\leqslant u\leqslant 2$}, \\
&(4-u-v)\mathbf{f}+\mathbf{g}_1+\mathbf{g}_2\ \text{if $2\leqslant u\leqslant 3$},
\endaligned
\right.
$$
which implies that
$$
t(u)=\left\{\aligned
&u\ \text{if $0\leqslant u\leqslant 2$}, \\
&4-u\ \text{if $2\leqslant u\leqslant 3$}.
\endaligned
\right.
$$
If $u\in[0,1]$, then
$P(u,v)=(u-v)(\mathbf{f}+\mathbf{g}_1+\mathbf{g}_2)$ and $N(u,v)=v(\mathbf{g}_1+\mathbf{g}_2)$ for $v\in[0,u]$,
which gives $(P(u,v))^2=2(u-v)^2$ for every $(u,v)\in[0,1]\times[0,u]$.
If $u\in[1,2]$, then
$$
P(u,v)=\left\{\aligned
&(u-v)\mathbf{f}+\mathbf{g}_1+\mathbf{g}_2\ \text{if $0\leqslant v\leqslant u-1$}, \\
&(u-v)\big(\mathbf{f}+\mathbf{g}_1+\mathbf{g}_2\big)\ \text{if $u-1\leqslant v\leqslant u$},
\endaligned
\right.
$$
and
$$
N(u,v)=\left\{\aligned
&0\ \text{if $0\leqslant v\leqslant u-1$}, \\
&(v-u+1)\big(\mathbf{g}_1+\mathbf{g}_2\big)\ \text{if $u-1\leqslant u\leqslant u$},
\endaligned
\right.
$$
which gives
$$
\big(P(u,v)\big)^2=\left\{\aligned
&4u-4v-2\ \text{if $0\leqslant v\leqslant u-1$}, \\
&2(u-v)^2\ \text{if $u-1\leqslant v\leqslant u$}.
\endaligned
\right.
$$
Finally, if $u\in[2,3]$, then
$$
P(u,v)=\left\{\aligned
&(4-u-v)\mathbf{f}+\mathbf{g}_1+\mathbf{g}_2\ \text{if $0\leqslant v\leqslant 3-u$}, \\
&(4-u-v)\big(\mathbf{f}+\mathbf{g}_1+\mathbf{g}_2\big)\ \text{if $3-u\leqslant v\leqslant 4-u$}.
\endaligned
\right.
$$
and
$$
N(u,v)=\left\{\aligned
&0\ \text{if $0\leqslant v\leqslant 3-u$}, \\
&(v+u-3)\big(\mathbf{g}_1+\mathbf{g}_2\big)\ \text{if $3-u\leqslant u\leqslant 4-u$},
\endaligned
\right.
$$
which gives
$$
\big(P(u,v)\big)^2=\left\{\aligned
&14-4u-4v\ \text{if $0\leqslant v\leqslant 3-u$}, \\
&2(4-u-v)^2\ \text{if $3-u\leqslant v\leqslant 4-u$}.
\endaligned
\right.
$$
Now, integrating, we get
\begin{multline*}
S(W^{\widehat{F}}_{\bullet,\bullet};\mathbf{f})=\frac{3}{26}\int\limits_0^1\int\limits_0^{u}2(u-v)^2dvdu+\frac{3}{26}\int\limits_1^2\int\limits_0^{u-1}4u-4v-2dvdu+\frac{3}{26}\int\limits_1^2\int\limits_{u-1}^u2(u-v)^2dvdu+\\
+\frac{3}{26}\int\limits_2^3\int\limits_0^{3-u}14-4u-4vdvdu+\frac{3}{26}\int\limits_2^3\int\limits_{3-u}^{4-u}2(4-u-v)^2dvdu=\frac{29}{52}<1,
\end{multline*}
which is a~contradiction. Lemma~\ref{lemma:K-stability-isolated-2} is proved.
\end{proof}

Now, using the~$G$-equivariant valuative criterion for K-polystability \cite[Corollary 4.14]{Zhuang},
we see that Theorem~\ref{theorem:K-stability-isolated} follows from Lemmas~\ref{lemma:K-stability-isolated-1} and \ref{lemma:K-stability-isolated-2}.

\subsection{Threefolds with non-isolated singularities.}
\label{section:beth-special}

We continue to use the notation introduced at the beginning of this section.
Suppose, in addition, that $a\pm b\pm 1=ab=0$. Up to a~change of coordinates, we may assume that
$$
X=\big\{u_1x=v_1y,u_2z=v_2t,w^2+(x+z)(y+t)=0\big\}\subset\mathbb{P}^1\times\mathbb{P}^1\times\mathbb{P}^4,
$$
and $Q=\{w^2+(x+z)(y+t)=0\}\subset\mathbb{P}^4$. Then $X$ is singular along the~curve
$$
C=\big\{w=0,x+z=0,y+t=0,u_1z=v_1t,u_2z=v_2t,v_1u_2=u_1v_2\big\},
$$
and $Q$ is a~quadric with $\mathrm{cA}_1$ singularities along the~line $L=\{w=0,x+z=0,y+t=0\}$.
Recall that $\pi\colon X\to Q$ is a~blow up of the~following two smooth conics:
\begin{align*}
{C}_1&=\{w^2+zt=0,x=0,y=0\},\\
{C}_2&=\{w^2+xy=0,z=0,t=0\},
\end{align*}
which both lie in the~smooth locus of the~quadric $Q$. We have $L=\pi(C)$. In this subsection we will  prove the following result.

\begin{theorem}
\label{theorem:K-stability-non-isolated}
The threefold $X$ from class $(\beth)$ described above, which has non-isolated singularities, is K-polystable.
\end{theorem}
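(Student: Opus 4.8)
The plan is to run the $\widehat G$-equivariant machinery of Section~\ref{section:beth-singular}, but enlarging the group $G$ to reflect the extra symmetry of this most degenerate member. Since $Q=\{w^2+(x+z)(y+t)=0\}$, the threefold $X$ carries a one-parameter subgroup
\[
\mathbb{T}(\mu)\colon\big([u_1{:}v_1],[u_2{:}v_2],[x{:}y{:}z{:}t{:}w]\big)\longmapsto\big([u_1{:}\mu^2 v_1],[u_2{:}\mu^2 v_2],[\mu x{:}\mu^{-1}y{:}\mu z{:}\mu^{-1}t{:}w]\big),
\]
which is normalized by $G=\langle\tau_1,\tau_2,\tau_3\rangle$, with $\tau_1$ inverting $\mathbb{T}$ and $\tau_2,\tau_3$ centralizing it. A short computation with the equations should show that $\mathbb{T}$ is a maximal torus of $\mathrm{Aut}(X)$, so that $\widehat G=\mathbb{T}\rtimes G$ is a reductive subgroup of $\mathrm{Aut}(X)$ with $\widehat G^{\,0}=\mathbb{T}$; in particular $X$ is not K-stable. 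We then invoke the $\widehat G$-equivariant valuative criterion \cite[Corollary 4.14]{Zhuang}: it is enough to show that $\beta(\mathbf F)>0$ for every $\widehat G$-invariant prime divisor $\mathbf F$ over $X$. This is consistent with $\widehat G^{\,0}\neq 1$ because the $\mathbb{T}$-product test configurations, which have $\beta=0$, are not $\widehat G$-invariant (the involution $\tau_1$ swaps their two boundary valuations).

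The first step is the analogue of Lemma~\ref{lemma:K-stability-isolated-1}: locating the centre $Z=C_X(\mathbf F)$ of a hypothetical $\mathbf F$ with $\beta(\mathbf F)\leqslant 0$. One checks that $[0{:}0{:}0{:}0{:}1]$ is the unique $\widehat G$-fixed point of $\mathbb{P}^4$ and that it does not lie on $Q$, so $X$ has no $\widehat G$-fixed point and $Z$ has dimension $\geqslant 1$. For surfaces I would use an analogue of Corollary~\ref{corollary:divisorial-stability}: the conic-bundle structure $\eta\colon X\to\mathbb{P}^1\times\mathbb{P}^1$ and the computations behind Lemmas~\ref{lemma:class-groups}–\ref{lemma:Eff-cone} should still go through (only the numerical values in the volume integrals change, since $X$ is no longer smooth), giving $\beta(S)>0$ for every $\widehat G$-invariant surface $S$. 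So $Z$ is a $\widehat G$-invariant irreducible curve, and $\eta(Z)$ is a $\widehat G$-invariant irreducible curve in $\mathbb{P}^1\times\mathbb{P}^1$. Here the discriminant of $\eta$ is the non-reduced curve $\Delta=2\Delta_0$, where $\Delta_0$ is the diagonal, and $C=\mathrm{Sing}(X)$ maps isomorphically onto $\Delta_0$. I would classify the $\widehat G$-invariant irreducible curves in $\mathbb{P}^1\times\mathbb{P}^1$ that can arise (the diagonal $\Delta_0$ and anti-diagonal $\Delta_0'$ being the obvious ones of bidegree $(1,1)$, and one must check that nothing else is relevant), and for each resulting $Z\neq C$ argue as in Lemma~\ref{lemma:G-invariant-center}: pick a general surface $S$ in one of the pencils $|H-E_1|$, $|H-E_2|$ through a general point $P\in Z$ with $P\notin E_1\cup E_2$, verify from the equations that $S$ is still a smooth del~Pezzo surface of degree $6$, and apply Proposition~\ref{proposition:delta-dP4} to get $\delta_P(X)>1$, contradicting $\beta(\mathbf F)\leqslant 0$. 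This reduces us to $Z=C$.

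The case $Z=C$ is the heart of the proof and mirrors Lemma~\ref{lemma:K-stability-isolated-2}. I would blow up $X$ along the singular curve $C$, obtaining $\varphi\colon\widetilde X\to X$ with exceptional divisor $\mathbf E$ fibred over $C\cong\mathbb{P}^1$, and — just as one blows up $\widetilde L_1$, $\widetilde L_2$ in the isolated case — probably perform a second blow-up along the proper transforms of the two ``lines'' making up the degenerate $\eta$-fibres over $\Delta_0$ (which now sweep out two surfaces over $\mathbb{P}^1$), so as to make $\varphi^*(-K_X)-u\mathbf E$ nef in a sub-interval and obtain an explicit Zariski decomposition on the final model for all $u$ in the pseudoeffective range, with the proper transforms of the quadric cones $S_1,S_2$ and of those surfaces entering the negative part as $u$ grows. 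Computing the resulting volume integral gives $S_X(\mathbf E)$, and I expect $\beta(\mathbf E)>0$. Then, for an arbitrary $\widehat G$-invariant $\mathbf F$ with $C_X(\mathbf F)=C$, I would apply Abban--Zhuang theory \cite{AbbanZhuang} to a flag $\widehat Z\subset\mathbf E$, use that $\mathrm{rk}\,\mathrm{Pic}^{\widehat G}(\mathbf E)$ is small to reduce $\widehat Z$ to a fibre of the induced fibration $\mathbf E\to\mathbb{P}^1$, and finish with a two-variable integral bounding $S\big(W^{\mathbf E}_{\bullet,\bullet};\widehat Z\big)$ strictly below $1$, exactly as in the proof of Lemma~\ref{lemma:K-stability-isolated-2}. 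Combining the three steps with \cite[Corollary 4.14]{Zhuang} yields the theorem.

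The main obstacle is precisely this last case. In the isolated-singularity case the exceptional surface is a single smooth quadric with $\mathrm{rk}\,\mathrm{Pic}^G=1$, so everything collapses to one well-understood del Pezzo surface; here the relevant exceptional divisor fibres non-trivially over $C\cong\mathbb{P}^1$, the birational model of $X$ over $C$ is genuinely two-dimensional, and the Zariski chambers of $\varphi^*(-K_X)-u\mathbf E$ — together with the order in which $S_1$, $S_2$ and the auxiliary surfaces enter the negative part — are more delicate to control. A secondary difficulty is making the classification of $\widehat G$-invariant curves in Step~2 complete and checking that the surfaces in $|H-E_i|$ meeting them are indeed smooth sextic del Pezzo surfaces, given that $X$ is singular along $C$ and the discriminant curve is non-reduced.
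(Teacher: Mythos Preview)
Your setup is right: the enlarged group $\widehat G=\mathbf G\cong(\mathbb C^\ast\rtimes\mumu_2)\times\mumu_2$ is exactly what the paper uses, and the reduction via \cite[Corollary~4.14]{Zhuang} to ruling out $\mathbf G$-invariant centers is the correct framework. But two of your three steps diverge from the paper in ways that create real gaps.

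\textbf{Step 2 fails as written.} You want to eliminate $\mathbf G$-invariant curves $Z\ne C$ by passing to a smooth sextic del~Pezzo $S\in|H-E_i|$ through a general $P\in Z$ and invoking Proposition~\ref{proposition:delta-dP4}. But every surface in $|H-E_1|$ meets the singular curve $C$: one computes $(H-E_1)\cdot C=1$ (the image line $L\subset Q$ has degree~$1$ and is disjoint from $C_1$). Since $H-E_1$ is Cartier and $X$ has $\mathrm{cA}_1$ singularities along $C$, every such $S$ is singular, and Proposition~\ref{proposition:delta-dP4} does not apply. This is not a secondary difficulty; it blocks the argument. The paper avoids the del~Pezzo flag entirely here and instead classifies the $\mathbf G$-invariant curves directly via the double cover $X\to W$ over the \textnumero~3.25 threefold: there are exactly two, namely $C$ and a second smooth conic $C'=\{x=z,\ y=t\}\cap X$ (Lemma~\ref{lemma:beth-special-curves}). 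Your $\eta$-image heuristic would not separate them, since both map to the diagonal of $\mathbb P^1\times\mathbb P^1$.

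\textbf{Step 3 misses the key surface.} Your plan to mirror Lemma~\ref{lemma:K-stability-isolated-2} with a second blow-up over $C$ targets only the exceptional $F$ of $\varphi\colon\widetilde X\to X$, but $C'$ does not lie in $F$. The paper's actual mechanism is different and cleaner: no second blow-up is needed. One introduces the proper transform $\widetilde{\mathscr S}\subset\widetilde X$ of the surface $\{xt=yz\}\cap Q$, which is $\mathbf G$-invariant, isomorphic to $\mathbb P^1\times\mathbb P^1$, and contains the proper transform of $C'$ (Lemma~\ref{lemma:beth-G-action}). The Zariski decomposition of $-K_{\widetilde X}-uF$ has negative part $(u-1)\widetilde{\mathscr S}$ for $u\in[1,2]$, not blown-up exceptional divisors; dually, the Zariski decomposition of $-K_{\widetilde X}-u\widetilde{\mathscr S}$ has $F$ and then $\widetilde E_1+\widetilde E_2$ in its negative part. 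With this, $\beta(F)=\tfrac{1}{26}$ and $S_X(\widetilde{\mathscr S})=\tfrac{49}{104}$ (Lemma~\ref{lemma:beth-special-Eff-cone}), and two short Abban--Zhuang computations on the flags $\widehat Z\subset F$ and $\widehat Z\subset\widetilde{\mathscr S}$ (Lemmas~\ref{lemma:beth-Z-not-F} and~\ref{lemma:beth-Z-not-S}) finish the proof. The missing idea in your outline is precisely $\widetilde{\mathscr S}$: it both governs the Zariski chambers you were worried about and houses the second invariant curve $C'$ that your Step~2 would have had to dispatch.
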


Let $\Pi_1$ and $\Pi_2$ be the~planes $\{x+z=0,w=0\}$ and $\{y+t=0,w=0\}$ in $\mathbb{P}^4$, respectively.
Then $\Pi_1$ and $\Pi_2$ are contained in the~quadric $Q$. In fact, we have $\Pi_1+\Pi_2=Q\cap\{w=0\}$.
Now, we let $S_1$ and $S_2$ be the~proper transforms on $X$ of the~planes $\Pi_1$ and $\Pi_2$, respectively.
Then $S_1\cap S_2=X\cap\{w=0\}$, $\Pi_1\cap\Pi_2=\mathrm{Sing}(Q)=L$ and $S_1\cap S_2=\mathrm{Sing}(X)=C$.

Recall that $\tau_3$ is the~involution in $\mathrm{Aut}(X)$ given by
$$
\big([u_1:v_1],[u_2:v_2],[x:y:z:t:w]\big)\mapsto \big([u_1:v_1],[u_2:v_2],[x:y:z:t:-w]\big).
$$
We set $W=X/\tau_3$, and let $\theta\colon X\to W$ be the~quotient map.
Then $\theta$ is ramified in~$S_1+S_2$, and $W$ is the~smooth Fano threefold in the~family \textnumero 3.25,
which can be identified with
$$
\big\{u_1x=v_1y,u_2z=v_2t\big\}\subset\mathbb{P}^1\times\mathbb{P}^1\times\mathbb{P}^3,
$$
where we consider $([u_1:v_1],[u_2:v_2],[x:y:z:t])$ as coordinates on $\mathbb{P}^1\times\mathbb{P}^1\times\mathbb{P}^3$.
Moreover, we have the~following commutative diagram:
\begin{equation}
\label{equation:beth-double-cover}
\xymatrix{
{Q}\ar@{-->}[drr]\ar@{->}[rrrr]^{\vartheta}&&&&\mathbb{P}^3\ar@{-->}[dll]\\
&&\mathbb{P}^1\times\mathbb{P}^1&&\\
{X}\ar@{->}[rru]_{\eta}\ar@{->}[uu]^{\pi}\ar@{->}[rrrr]_{\theta}&&&&{W}\ar@{->}[uu]_{\varpi}\ar@{->}[ull]}
\end{equation}
where
\begin{itemize}
\item  $\vartheta$ is the~double cover given by $[x:y:z:t:w]\mapsto[x:y:z:t]$,

\item $\varpi$ is a blow up of the~lines $\vartheta(C_1)=\{x=0,y=0\}$ and $\vartheta(C_2)=\{z=0,t=0\}$,

\item the~map $Q\dasharrow \mathbb{P}^1\times\mathbb{P}^1$ is given by $[x:y:z:t:w]\mapsto([x:y],[z:t])$,

\item the~map $\mathbb{P}^3\dasharrow\mathbb{P}^1\times\mathbb{P}^1$ is given by $[x:y:z:t]\mapsto([x:y],[z:t])$,

\item $\eta$ and $W\to\mathbb{P}^1\times\mathbb{P}^1$ are natural projections.
\end{itemize}
Observe that the~double cover $\vartheta$ is ramified in $\Pi_1+\Pi_2$,
the morphism $\eta$ is a conic bundle, and the morphism $W\to\mathbb{P}^1\times\mathbb{P}^1$ is a $\mathbb{P}^1$-bundle.

\begin{remark}
\label{remark:referee}
It follows from \cite{Dervan,Fujita2019b,LiuZhu,Zhuang} that
\begin{center}
$X$ is K-polystable $\iff$ the~log Fano pair $\Big(W,\frac{1}{2}\big(\theta(S_1)+\theta(S_2)\big)\Big)$ is K-polystable.
\end{center}
Moreover, as it was pointed to us by the anonymous referee,
it is not very difficult to prove that $(W,\frac{1}{2}(\theta(S_1)+\theta(S_2))$ is K-polystable using the symmetries of this log Fano~pair.
Nevertheless, we decided to prove the K-polystability of the Fano threefold $X$ directly for the consistency of the exposition.
\end{remark}

In addition to the~finite subgroup $G\cong\mumu_2^3$ described in the beginning of this section,
the~group $\mathrm{Aut}(X)$~contains a~subgroup $\Gamma\cong\mathbb{C}^\ast$ that consists of the~automorphisms
$$
\big([u_1:v_1],[u_2:v_2],[x:y:z:t:w]\big)\mapsto \Big(\Big[\frac{u_1}{\lambda}:\lambda v_1\Big],\Big[\frac{u_2}{\lambda}:\lambda v_2\Big],\Big[\lambda x:\frac{y}{\lambda}:\lambda z:\frac{t}{\lambda}:w\Big]\Big),
$$
where $\lambda\in\mathbb{C}^\ast$.
Let $\mathbf{G}$ be the~subgroup in $\mathrm{Aut}(X)$ generated by
the~subgroup $\Gamma$ together with the~involutions $\tau_1$, $\tau_2$, $\tau_3$ described earlier in this section.
Note that $\tau_3\in\Gamma$, so that
$$
\mathbf{G}\cong\big(\mathbb{C}^\ast\rtimes\mumu_2\big)\times\mumu_2,
$$
because $\tau_1\circ\gamma=\gamma\circ\tau_1$ and $\tau_2\circ\gamma\circ\tau_2=\gamma^{-1}$ for every $\gamma\in\Gamma$.
Note also that the~commutative diagram \eqref{equation:beth-double-cover} is $\mathbf{G}$-equivariant,
and the~curve $C$ is $\mathbf{G}$-invariant. Set
$$
C^\prime=\big\{u_1x=v_1y,u_2z=v_2t,w^2+(x+z)(y+t)=0,x=z,t=y\big\}\subset\mathbb{P}^1\times\mathbb{P}^1\times\mathbb{P}^4.
$$
Then $C^\prime$ is a $\mathbf{G}$-invariant irreducible smooth curve such that $\pi(C^\prime)$ is a smooth conic.

\begin{lemma}
\label{lemma:beth-special-curves}
The following assertions holds:
\begin{enumerate}
\item[$(\mathrm{1})$] $X$ does not have $\mathbf{G}$-fixed points;
\item[$(\mathrm{2})$] the~only $\mathbf{G}$-invariant irreducible curves in $X$ are $C$ and $C^\prime$.
\end{enumerate}
\end{lemma}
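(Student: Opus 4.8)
The plan is to deal with (1) in one line and then reduce the curve classification in (2) to a statement about curves on the quadric $Q$, where the cone structure of $Q$ makes everything transparent. For (1): a $\mathbf{G}$-fixed point would in particular be fixed by $\Gamma$ and by $\tau_1$. Since $\Gamma$ acts non-trivially on each $\mathbb{P}^1$-factor of $\mathbb{P}^1\times\mathbb{P}^1\times\mathbb{P}^4$, a $\Gamma$-fixed point must have $[u_1:v_1],[u_2:v_2]\in\{[1:0],[0:1]\}$; but $\tau_1$ interchanges these two points on each factor, so no point of $\mathbb{P}^1\times\mathbb{P}^1$ — a fortiori of $X$ — is fixed by both, and (1) follows.

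For (2) I would first observe that a $\mathbf{G}$-invariant irreducible curve $Z\subset X$ is not contained in $E_1\cup E_2$, since $\tau_2$ interchanges the two disjoint surfaces $E_1$ and $E_2$; hence $Z$ is the strict transform of the $\mathbf{G}$-invariant irreducible curve $\Sigma:=\pi(Z)\subset Q$. Both $L=\pi(C)$ and the conic $\mathfrak{c}:=\pi(C')=\{x=z,\,y=t\}\cap Q$ are disjoint from $C_1\cup C_2$, so $\pi$ is an isomorphism over each and $C,C'$ are precisely their strict transforms; thus it suffices to show that $L$ and $\mathfrak{c}$ are the only $\mathbf{G}$-invariant irreducible curves in $Q$. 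Then I would change coordinates to $A_1=x+z$, $A_2=x-z$, $B_1=y+t$, $B_2=y-t$, $C=w$, in which $Q=\{C^2+A_1B_1=0\}$ is the cone, with vertex line $L=\{A_1=B_1=C=0\}$, over the smooth conic $\mathfrak{c}=Q\cap\{A_2=B_2=0\}\subset\mathbb{P}^2_{[A_1:B_1:C]}$. In these coordinates $\Gamma$ has weights $(1,1,-1,-1,0)$, $\tau_1$ acts by $(A_1,A_2,B_1,B_2,C)\mapsto(B_1,B_2,A_1,A_2,C)$, $\tau_2$ by $(A_1,A_2,B_1,B_2,C)\mapsto(A_1,-A_2,B_1,-B_2,C)$, and $\tau_3=\gamma_{-1}\in\Gamma$; a direct check shows that $\tau_2$ commutes with $\Gamma$ whereas $\tau_1\gamma_\lambda\tau_1=\gamma_\lambda^{-1}$.

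Now I would argue by a dichotomy on $\Sigma$. If $\Gamma$ acts trivially on $\Sigma$, then $\Sigma$ lies in $Q^\Gamma$, which is the union of the two disjoint lines $\{[A_1:A_2:0:0:0]\}$ and $\{[0:0:B_1:B_2:0]\}$; being irreducible, $\Sigma$ equals one of them, impossible since $\tau_1$ swaps them. So $\Gamma$ acts non-trivially on $\Sigma$. If $\Sigma=L$ we are done (this gives $Z=C$); otherwise $\Sigma\neq L$, so projection from the vertex $L$ gives a dominant $\mathbf{G}$-equivariant rational map $\Sigma\dashrightarrow\mathfrak{c}$. Its image is a $\mathbf{G}$-invariant subvariety of $\mathfrak{c}\cong\mathbb{P}^1$; it cannot be a point, because the two $\Gamma$-fixed points of $\mathfrak{c}$ are interchanged by $\tau_1$, so $\mathfrak{c}$ has no $\mathbf{G}$-fixed point; hence the map is surjective, of some degree $d$. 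The key point is that $d=1$: on the normalization $\widehat\Sigma$ the non-trivial $\mathbb{C}^\ast$-action forces $\widehat\Sigma\cong\mathbb{P}^1$, and the induced $\mathbb{C}^\ast$-equivariant degree-$d$ morphism $\widehat\Sigma\to\mathfrak{c}$ must send the two $\Gamma$-fixed points of $\widehat\Sigma$ bijectively to the two $\Gamma$-fixed points of $\mathfrak{c}$; since the $\Gamma$-action on $\mathfrak{c}$ is faithful, the ramification index over each such point is $1$, and as the fibre over a fixed point of $\mathfrak{c}$ is a single point this yields $d=1$. Therefore $\Sigma\dashrightarrow\mathfrak{c}$ is birational; as $\tau_2$ commutes with $\Gamma$ and acts trivially on $\mathfrak{c}$, it acts trivially on $\widehat\Sigma$ and hence on $\Sigma$, so $\Sigma\subset\mathrm{Fix}(\tau_2)\cap Q=\mathfrak{c}\sqcup L$, and irreducibility with $\Sigma\neq L$ forces $\Sigma=\mathfrak{c}$, i.e.\ $Z=C'$.

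The hard part is the step $d=1$: $\tau_2$-invariance alone does not exclude multisections $\Sigma\dashrightarrow\mathfrak{c}$ of degree $\geq 2$, and the clean way around this is to exploit $\Gamma$-equivariance together with the faithfulness of the torus action on the base conic $\mathfrak{c}$ — this pins the degree down without any further computation. The rest is bookkeeping I would carry out carefully: that $\pi$ is an isomorphism over $L$ and over $\mathfrak{c}$ (equivalently, both avoid $C_1\cup C_2$), that $Q^\Gamma$ and $\mathrm{Fix}(\tau_2)\cap Q$ are exactly the loci stated above, and that $\mathfrak{c}$ has exactly two $\Gamma$-fixed points, interchanged by $\tau_1$.
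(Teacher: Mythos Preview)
Your proof is correct. Both parts hold up under scrutiny: for (1) the $\mathbb{P}^1\times\mathbb{P}^1$ projection argument is cleaner than what the paper does; for (2) your degree-one argument is sound --- the fibre over each $\Gamma$-fixed point of $\mathfrak{c}$ consists of $\Gamma$-fixed points of $\widehat\Sigma$ (connectedness of $\mathbb{C}^\ast$), so the two fixed points match up bijectively, and faithfulness of the $\Gamma$-action on $\mathfrak{c}$ (weights $\pm 1$) forces the local weight relation $ae=\pm 1$ with $a,e$ positive integers, hence $e=1$ and $d=1$. Incidentally, you have the commutation relations right: it is $\tau_2$ that commutes with $\Gamma$ and $\tau_1$ that inverts it, so the paper's remark just before the lemma has these swapped.

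The paper takes a different and somewhat shorter route. Rather than working on $Q$, it passes through the $\mathbf{G}$-equivariant quotient by $\tau_3$, using the double cover $\vartheta\colon Q\to\mathbb{P}^3$ (and the diagram through $W$) to reduce both assertions to the analogous statements in $\mathbb{P}^3$. There the induced $\Gamma$-action has weights $(1,0,1,0)$ on $[x:y:z:t]$, so every generic $\Gamma$-orbit closure is a line joining the two fixed lines $\{y=t=0\}$ and $\{x=z=0\}$; consequently any $\Gamma$-invariant irreducible curve is either one of the fixed lines (ruled out by $\tau_1$) or one of these orbit-lines, and imposing $\tau_1,\tau_2$-invariance leaves exactly $\{x=z,\,y=t\}$ and $\{x+z=y+t=0\}$, which are $\vartheta(\mathfrak{c})$ and $\vartheta(L)$. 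This sidesteps the cone geometry and the ramification argument entirely. Your approach, by contrast, stays on $Q$ and exploits its cone structure directly; the cost is the degree-one step, but the payoff is that you never need to descend to the quotient and then argue that the two candidate lines lift irreducibly.
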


\begin{proof}
Since  the diagram \eqref{equation:beth-double-cover} is $\mathbf{G}$-equivariant and $\tau_2$ swaps the~$\pi$-exceptional surfaces,
it is enough to prove the~following assertions for the~induced $\mathbf{G}$-action on $\mathbb{P}^3$:
\begin{enumerate}
\item[$(\mathrm{1})$] $\mathbb{P}^3$ does not have $\mathbf{G}$-fixed points;
\item[$(\mathrm{2})$] the~only $\mathbf{G}$-invariant irreducible curves in $\mathbb{P}^3$ are the~lines $\vartheta(L)$ and $\vartheta\circ\pi(C^\prime)$.
\end{enumerate}
Both these assertions are easy to check, since $\tau_1$ and $\tau_2$ act on $\mathbb{P}^3$ as
\begin{align*}
\tau_1\colon [x:y:z:t] &\mapsto [y:x:t:z],\\
\tau_2\colon [x:y:z:t] &\mapsto [z:t:x:y],
\end{align*}
and the~subgroup $\Gamma$ acts on $\mathbb{P}^3$ as $[x:y:z:t:w]\mapsto[\lambda^2 x:y:\lambda^2 z:t]$, where $\lambda\in\mathbb{C}^\ast$.
\end{proof}

Let $Q\dasharrow\mathbb{P}^2$ be the~rational map given by $[x:y:z:t:w]\mapsto[x+z:y+t:w]$.
This~map is undefined along the~line $\pi(C)=\mathrm{Sing}(Q)=\{x+z=0,y+t=0,w=0\}$,
and the~closure of its image is a smooth conic $\mathscr{C}\subset\mathbb{P}^2$.
Therefore, we have a $\mathbf{G}$-equivariant dominant map $\chi\colon Q\dasharrow \mathscr{C}$,
which fits the~following $\mathbf{G}$-equivariant commutative diagram:
\begin{equation}
\label{equation:beth-big}
\xymatrix{
&\mathbb{P}^1\times\mathbb{P}^1&&\\
&\widetilde{X}\ar@{->}[dl]_{\phi}\ar@{->}[dr]\ar@{->}[rr]^{\rho}&&\mathscr{C}\times\mathbb{P}^1\times\mathbb{P}^1\ar@{->}[dd]^{\mathrm{pr}_1}\ar@/_2pc/@{->}[ull]_{\mathrm{pr}_{2,3}}\\%
X\ar@/^1pc/@{->}[uur]^{\eta}\ar@{->}[dr]_{\pi}&&\widetilde{Q}\ar@{->}[dl]\ar@{->}[rd]&&\\%
&{Q}\ar@{-->}[rr]_{\chi}&&\mathscr{C}}
\end{equation}
where both $\widetilde{X}$ and $\widetilde{Q}$ are smooth threefolds, and
\begin{itemize}
\item $\phi$ is the~blow of the~curve $C$,
\item $\widetilde{Q}\to Q$ is the~blow up of the~line~$L$,
\item $\widetilde{X}\to\widetilde{Q}$ is the~blow up of the~preimages of the~conics $C_1$~and~$C_2$,
\item $\mathrm{pr}_{1}$ is the~projection to the~first factor,
\item $\mathrm{pr}_{2,3}$ is the~projection to the~product of the~second and the~third factors,
\item $\widetilde{Q}\to\mathscr{C}$ is a~$\mathbb{P}^2$-bundle,
\item $\rho$ is a birational morphism.
\end{itemize}
To describe $\rho$ more explicitly,
let~$\mathscr{S}$ be the~surface in $Q$ that is cut out by $xt=yz$,
and let $\widetilde{\mathscr{S}}$ be its proper transform on $\widetilde{X}$.
Then $\rho$ contracts the~surface $\widetilde{\mathscr{S}}$ to a smooth curve.
Note that the~birational map $Q\dasharrow\mathscr{C}\times\mathbb{P}^1\times\mathbb{P}^1$ in \eqref{equation:beth-big} is given by
$$
[x:y:z:t:w]\mapsto\big([x+z:y+t:w],[x:y],[z:t]\big).
$$
Furthermore, there exists a $\mathbf{G}$-equivariant ramified double cover $\upsilon\colon\mathscr{C}\to\mathbb{P}^1$
that fits into the~following $\mathbf{G}$-equivariant commutative diagram:
\begin{equation}
\label{equation:beth-quotient-long}
\xymatrix{
Q\ar@{->}[d]_{\vartheta}&&X\ar@{->}[ll]_{\pi}\ar@{->}[d]_{\theta} &&\widetilde{X}\ar@{->}[ll]_{\phi}\ar@{->}[d]_{\tilde{\theta}}\ar@{->}[rr]^(.4){\rho}&&\mathscr{C}\times\mathbb{P}^1\times\mathbb{P}^1\ar@{->}[d]^{\upsilon\times\mathrm{Id}_{\mathbb{P}^1}\times\mathrm{Id}_{\mathbb{P}^1}}\\%
\mathbb{P}^3&& W\ar@{->}[ll]_{\varpi}&& \widetilde{W}\ar@{->}[ll]_{\varphi}\ar@{->}[rr]^(.4){\varrho}&&\mathbb{P}^1\times\mathbb{P}^1\times\mathbb{P}^1}
\end{equation}
where $\tilde{\theta}$ is the~quotient by the~involution $\tau_3$,
the~map $\varphi$ is the~blow up of the~curve $\theta(C)$, and $\varrho$ is the~blow up of the~tridiagonal.
Note that $\widetilde{W}$ is the~unique smooth Fano threefold in the~deformation family \textnumero 4.6,
and $\varpi\circ\varphi$ blows up the~lines $\vartheta(L)$, $\vartheta(C_1)$, $\vartheta(C_2)$.

Let $H$ be the~proper transform on $X$ of a~general hyperplane section of the~quadric $Q$,
let $E_1$ and $E_2$ be the~$\pi$-exceptional surfaces such that $\pi(E_1)=C_1$ and $\pi(E_2)=C_2$. Then
$$
-K_X\sim 3H-E_1-E_2.
$$
Let $\widetilde{H}$, $\widetilde{E}_1$, $\widetilde{E}_2$ be the~proper transforms on $\widetilde{X}$ of the~surfaces $H$, $E_1$, $E_2$,
respectively, and let $F$ be the~$\phi$-exceptional surface.
Then $-K_{\widetilde{X}}\sim\phi^*(-K_X)\sim 3\widetilde{H}-\widetilde{E}_1-\widetilde{E}_2$ and
$$
\widetilde{\mathscr{S}}\sim 2\widetilde{H}-\widetilde{E}_1-\widetilde{E}_2-F.
$$
Note that $F$ and $\widetilde{\mathscr{S}}$ are $\mathbf{G}$-invariant prime divisors in $\widetilde{X}$,
which intersect transversally along a smooth irreducible $\mathbf{G}$-invariant curve.
Furthermore, using \eqref{equation:beth-quotient-long}, we also see that the~surface $\widetilde{\theta}(F)$ is $\varphi$-exceptional,
and the~surface $\widetilde{\theta}(\widetilde{\mathscr{S}})$ is $\varrho$-exceptional.

\begin{lemma}
\label{lemma:beth-G-action}
The following assertions hold:
\begin{enumerate}[(a)]
\item One has $F\cong\mathbb{P}^1\times\mathbb{P}^1$,
and  $F$ contains exactly two $\mathbf{G}$-invariant irreducible curves,
which are both smooth and contained in the~linear system $|\widetilde{\mathscr{S}}\vert_{F}|$.
\item One has $\widetilde{\mathscr{S}}\cong\mathbb{P}^1\times\mathbb{P}^1$,
and $\widetilde{\mathscr{S}}$ contains exactly two $\mathbf{G}$-invariant irreducible curves,
the intersection $F\cap \widetilde{\mathscr{S}}$ and the~proper transform of the~curve $C^\prime$,
which are both smooth and contained in the~linear system $|F\vert_{\widetilde{\mathscr{S}}}|$.
\end{enumerate}
\end{lemma}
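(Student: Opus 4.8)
The plan is to make both surfaces and the induced $\mathbf{G}$-action completely explicit and then read off the invariant curves; the geometric content is small, but the bookkeeping has to be done carefully.

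\emph{Identifying the surfaces.} For $F$ I would exhibit two fibrations. The blow-up $\phi$ restricts to a conic bundle $\phi|_F\colon F\to C$ with $C\cong\mathbb{P}^1$, and since $X$ has transverse $\mathrm{cA}_1$ singularities of constant type along $C$, every fibre is a smooth conic. On the other hand, the rational map $\chi\colon Q\dashrightarrow\mathscr{C}$ has indeterminacy locus exactly $L=\pi(C)$, so $\chi\circ\pi\circ\phi$ is a morphism $\widetilde{X}\to\mathscr{C}$ whose restriction to $F$ maps each fibre of $\phi|_F$ isomorphically onto $\mathscr{C}\cong\mathbb{P}^1$ --- this is checked on a transverse slice to $L$, where $Q$ becomes the quadric cone $\{w^2+(x+z)(y+t)=0\}$ and $\chi$ is the tautological map on its exceptional conic. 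The pair $(\phi|_F,\chi\circ\pi\circ\phi|_F)$ then identifies $F$ with $C\times\mathscr{C}\cong\mathbb{P}^1\times\mathbb{P}^1$, and this identification is $\mathbf{G}$-equivariant because $C$, $\mathscr{C}$ and both morphisms are $\mathbf{G}$-equivariant. For $\widetilde{\mathscr{S}}$ one argues similarly: $\rho$ contracts $\widetilde{\mathscr{S}}$ onto a smooth rational curve, so $\widetilde{\mathscr{S}}$ is ruled over it, while $\eta$ restricts to a second ruling of $\widetilde{\mathscr{S}}$ over the diagonal $\mathbb{P}^1\subset\mathbb{P}^1\times\mathbb{P}^1$ with smooth fibres; together these give $\widetilde{\mathscr{S}}\cong\mathbb{P}^1\times\mathbb{P}^1$. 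Using $\widetilde{\mathscr{S}}\sim 2\widetilde{H}-\widetilde{E}_1-\widetilde{E}_2-F$, the fact that $\widetilde{E}_i$ is disjoint from $F$, and adjunction on $F$ (the blow-up $\phi$ is crepant along $C$, being fibrewise the minimal resolution of an $A_1$ point), one computes $F|_F$, hence $\widetilde{\mathscr{S}}|_F$, and similarly $F|_{\widetilde{\mathscr{S}}}$: with respect to the two rulings, $\widetilde{\mathscr{S}}|_F$ is the bidegree $(2,1)$ class, and $F|_{\widetilde{\mathscr{S}}}$ is computed in the same way.

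\emph{The $\mathbf{G}$-action and the invariant curves.} On $C$ and on $\mathscr{C}$ the subgroup $\Gamma\cong\mathbb{C}^\ast$ acts with nontrivial weight (hence two fixed points each), $\tau_1$ interchanges those two fixed points, $\tau_2$ acts trivially, and $\tau_3$ is the element $-1\in\Gamma$. Transporting this to $F\cong C\times\mathscr{C}$, one gets in suitable affine coordinates $(p,q)$ that $\Gamma(\lambda)\colon(p,q)\mapsto(\lambda^2 p,\lambda^{-1}q)$, $\tau_1\colon(p,q)\mapsto(p^{-1},-q^{-1})$, $\tau_2=\mathrm{id}$, $\tau_3\colon(p,q)\mapsto(p,-q)$. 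A $\mathbf{G}$-invariant irreducible curve cannot be a ruling through a $\Gamma$-fixed point, since $\tau_1$ swaps the two fixed points of each factor; hence it dominates both factors, so it is the closure of a general $\Gamma$-orbit. On $F$ these closures are exactly the curves $\{pq^2=c\}$ with $c\in\mathbb{C}^\ast$; invariance under $\tau_1$ forces $c=\pm1$, and invariance under $\tau_2,\tau_3$ is automatic, so precisely two survive. Both are smooth rational curves of bidegree $(2,1)$, hence lie in $|\widetilde{\mathscr{S}}|_F|$; this proves (a). The identical analysis on $\widetilde{\mathscr{S}}$ produces exactly two $\mathbf{G}$-invariant irreducible curves, both smooth and in $|F|_{\widetilde{\mathscr{S}}}|$. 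To finish (b) I would check that $C^\prime\subset\mathscr{S}$ (on $C^\prime$ one has $x=z$, $t=y$, whence $xt=yz$), that $C^\prime\cap C=\varnothing$ (so that its proper transform on $\widetilde{X}$ is a smooth curve contained in $\widetilde{\mathscr{S}}$ but not in $F$), and that $F\cap\widetilde{\mathscr{S}}$ is $\mathbf{G}$-invariant; these are then two distinct $\mathbf{G}$-invariant irreducible curves in $\widetilde{\mathscr{S}}$, so they must be the two just found.

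The main obstacle is the first stage: proving that $F$ and $\widetilde{\mathscr{S}}$ are $\mathbb{P}^1\times\mathbb{P}^1$ rather than a nontrivial Hirzebruch surface, and computing the restriction classes $\widetilde{\mathscr{S}}|_F$ and $F|_{\widetilde{\mathscr{S}}}$ correctly --- in particular pinning down the discrepancy of $\phi$ along $C$ and the weights of the $\Gamma$-action on each ruling. Once those are fixed, the enumeration of invariant curves is a short calculation.
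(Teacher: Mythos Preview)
Your overall strategy --- make the surfaces and the $\mathbf{G}$-action explicit, then read off the invariant curves --- is exactly what the paper intends by ``an easy explicit calculation using \eqref{equation:beth-quotient-long}''. Your treatment of $F$ is essentially correct: the identification $F\cong C\times\mathscr{C}$ via $(\phi|_F,\chi\circ\pi\circ\phi|_F)$ works, the weights you compute are right, and the enumeration of the two invariant $(1,2)$-curves is clean.

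The gap is in part~(b). Two separate problems arise, both from transporting the argument for $F$ verbatim to $\widetilde{\mathscr{S}}$.

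First, the map $\eta\circ\phi|_{\widetilde{\mathscr{S}}}\colon\widetilde{\mathscr{S}}\to\Delta$ does \emph{not} have smooth fibres. If you parametrise $\widetilde{\mathscr{S}}$ by $([p:q],[\alpha:\beta])\mapsto[p^2\alpha:-q^2\alpha:p^2\beta:-q^2\beta:pq(\alpha+\beta)]$, then the map to $\Delta$ is $[p:q]\mapsto[p^2:-q^2]$, which is $2:1$; each fibre is a pair of rulings. So this map cannot be one of the two rulings and does not by itself show $\widetilde{\mathscr{S}}\cong\mathbb{P}^1\times\mathbb{P}^1$. The correct second projection is to $\mathscr{C}$ (the base of the $\rho$-contraction), i.e.\ the first factor in the parametrisation above.

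Second, and more seriously, the $\Gamma$-action on $\widetilde{\mathscr{S}}$ is \emph{not} of the same type as on $F$: in the coordinates $([p:q],[\alpha:\beta])$ one finds that $\Gamma$ acts only on $[p:q]$ and \emph{trivially} on $[\alpha:\beta]$. Hence every ruling $\{[\alpha:\beta]=\mathrm{const}\}$ is $\Gamma$-invariant, and your key step ``an invariant curve cannot be a ruling, hence is the closure of a generic $\Gamma$-orbit'' fails outright here. What actually singles out two curves is the involution $\tau_2$, which acts by $[\alpha:\beta]\mapsto[\beta:\alpha]$ on that factor (while $\tau_1,\tau_3$ act trivially there); its two fixed rulings $[\alpha:\beta]=[1:\pm1]$ are precisely $F\cap\widetilde{\mathscr{S}}$ and the proper transform of $C'$. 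These are rulings, i.e.\ of bidegree $(1,0)$, which matches $|F|_{\widetilde{\mathscr{S}}}|$ (cf.\ the computation $F|_{\widetilde{\mathscr{S}}}=\mathbf{s}$ used later in Lemma~\ref{lemma:beth-Z-not-S}), not the $(2,1)$-class your ``identical analysis'' would predict. Once you redo the weight computation on $\widetilde{\mathscr{S}}$ honestly, the rest of your argument for (b) --- checking $C'\subset\mathscr{S}$, $C'\cap C=\varnothing$, and matching the two curves --- goes through.
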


\begin{proof}
An easy explicit calculation using \eqref{equation:beth-quotient-long} and the~formulas for the~$\mathbf{G}$-action.
\end{proof}

On $\widetilde{X}$, intersections of the~divisors  $\widetilde{H}$, $\widetilde{E}_1$, $\widetilde{E}_2$, $F$ can be computed as follows:
$$
\widetilde{H}^3=2, \widetilde{E}_1^3=\widetilde{E}_2^3=-4, F^3=-4, \widetilde{H}\cdot F^2=-2, \widetilde{H}\cdot \widetilde{E}_1^2=\widetilde{H}\cdot \widetilde{E}_2^2=-2,
$$
and all other triple intersections are zero.

\begin{lemma}
\label{lemma:beth-special-Eff-cone}
Let $S$ be an $\mathbf{G}$-invariant prime divisor in $\widetilde{X}$. Then $\beta(S)>0$.
\end{lemma}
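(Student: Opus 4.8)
The plan is to reduce the claim to a computation of $S_X(S)$ for the finitely many possibilities for a $\mathbf{G}$-invariant prime divisor $S$ in $\widetilde{X}$, exactly as in the proof of Corollary~\ref{corollary:divisorial-stability} but now on the blow-up $\widetilde{X}$ with its $\mathbf{G}$-action. Since $\beta(S)=A_X(S)-S_X(S)$ and $A_X(S)\geqslant 1$, it suffices to show $S_X(S)<1$ for every $\mathbf{G}$-invariant prime divisor $S\subset\widetilde{X}$. By the $\mathbf{G}$-equivariant geometry and Lemma~\ref{lemma:beth-special-curves}, together with the structure of the diagrams \eqref{equation:beth-big} and \eqref{equation:beth-quotient-long}, the only $\mathbf{G}$-invariant prime divisors in $\widetilde{X}$ are the ones we can write down explicitly: the $\phi$-exceptional surface $F$, the proper transform $\widetilde{\mathscr S}$ of the surface $\mathscr S=\{xt=yz\}\cap Q$, the proper transforms $\widetilde E_1+\widetilde E_2$ (which is $\mathbf{G}$-invariant as a sum though not irreducible — so one should instead work with $\widetilde{E}_1$ and note $\tau_2$ swaps them, reducing to checking the $\mathbf{G}$-orbit), the proper transform $\widetilde H$ of a hyperplane section, and the proper transforms of the surfaces $\{w=0\}$, i.e.\ $\widetilde S_1$ and $\widetilde S_2$. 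So the first step is to enumerate these, using that any $\mathbf{G}$-invariant effective divisor lies in the cone generated by a short explicit list of classes (the analogue of Lemma~\ref{lemma:Eff-cone} on $\widetilde{X}$), which follows from $\mathrm{rk}\,\mathrm{Pic}^{\mathbf{G}}(\widetilde{X})$ being small.

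\textbf{The second step} is to bound $S_X(S)$ by $\frac{1}{26}\int_0^\infty\mathrm{vol}(-K_X-uF_0)\,du$ where $F_0$ ranges over the generators of the $\mathbf{G}$-invariant effective cone on $\widetilde{X}$ — concretely $\widetilde{H}$, $\widetilde{E}_1+\widetilde{E}_2$, $F$, $\widetilde{\mathscr S}$, and $\widetilde S_1+\widetilde S_2$ (or $2\widetilde{H}-\widetilde{E}_1-\widetilde{E}_2$, the class pulling back the conic bundle). For each generator I would compute the Zariski decomposition of $-K_{\widetilde{X}}-uF_0$ along the same lines as in the proof of Lemma~\ref{lemma:K-unstable} and Corollary~\ref{corollary:divisorial-stability}: identify the pseudoeffective threshold by intersecting with the fibres of the conic bundle $\eta$ (or $\nu$), identify the nef threshold by intersecting with the obvious negative curves, and subtract off the negative part. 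The intersection numbers on $\widetilde{X}$ are already recorded just before the statement of Lemma~\ref{lemma:beth-special-Eff-cone} ($\widetilde{H}^3=2$, $\widetilde{E}_i^3=-4$, $F^3=-4$, $\widetilde{H}\cdot F^2=-2$, $\widetilde{H}\cdot\widetilde{E}_i^2=-2$, the rest zero), so the volume polynomials and the integrals are mechanical; in every case I expect the value of $S_X(S)$ to come out strictly below $1$, yielding $\beta(S)=A_X(S)-S_X(S)\geqslant 1-S_X(S)>0$.

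\textbf{The main obstacle} is the surface $F$, whose normal bundle makes $-K_{\widetilde{X}}-uF$ stay pseudoeffective for a relatively long range of $u$, and for which the Zariski decomposition genuinely changes chamber — compare the computation of $S_X(F)$ in Lemma~\ref{lemma:K-stability-isolated-2}, where one even had to pass to a further blow-up $\widehat X$ to compute the volume correctly. Here the relevant extra contraction is governed by $\widetilde{\mathscr S}$ (the surface $\rho$ contracts): for $u$ past the nef threshold the negative part of $-K_{\widetilde{X}}-uF$ will pick up a multiple of $\widetilde{\mathscr S}$, and possibly of $\widetilde E_1,\widetilde E_2$, and one must track this carefully using the relation $\widetilde{\mathscr S}\sim 2\widetilde H-\widetilde E_1-\widetilde E_2-F$. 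By contrast, for $\widetilde H$, for $\widetilde E_1+\widetilde E_2$, and for the conic-bundle class the analysis is essentially identical to Corollary~\ref{corollary:divisorial-stability} and is routine. Once all generators are handled, the inequality $S_X(S)\leqslant\max_{F_0}\frac{1}{26}\int_0^\infty\mathrm{vol}(-K_X-uF_0)\,du<1$ gives $\beta(S)>0$ for every $\mathbf{G}$-invariant prime divisor $S$, completing the proof of Lemma~\ref{lemma:beth-special-Eff-cone}.
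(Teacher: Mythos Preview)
Your proposal is correct and follows essentially the same route as the paper: show that any $\mathbf{G}$-invariant prime divisor lies in the cone spanned by $\widetilde{H}$, $\widetilde{E}_1+\widetilde{E}_2$, $\widetilde{S}_1+\widetilde{S}_2$, $\widetilde{\mathscr{S}}$, $F$, then bound $S_X(S)$ by the integral against each generator. The paper carries out exactly the two computations you flag as nontrivial --- for $F$ (pseudoeffective threshold $u=2$, negative part $(u-1)\widetilde{\mathscr{S}}$ on $[1,2]$, giving $\beta(F)=\tfrac{1}{26}$) and for $\widetilde{\mathscr{S}}$ (threshold $u=\tfrac{3}{2}$, negative part $uF$ on $[0,1]$ and $uF+(u-1)(\widetilde{E}_1+\widetilde{E}_2)$ on $[1,\tfrac{3}{2}]$, giving $S_X(\widetilde{\mathscr{S}})=\tfrac{49}{104}$) --- and no further blow-up is needed, so your worry about passing to a $\widehat{X}$ as in Lemma~\ref{lemma:K-stability-isolated-2} does not materialise here.
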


\begin{proof}
Let $\widetilde{S}_1$ and $\widetilde{S}_2$ be  proper transforms on $\widetilde{X}$ of the~surfaces $S_1$ and $S_2$, respectively.
Using \eqref{equation:beth-big}, one can show that
$S\sim a\widetilde{H}+b\big(\widetilde{E}_1+\widetilde{E}_2\big)+c\big(\widetilde{S}_1+\widetilde{S}_2\big)+d\widetilde{\mathscr{S}}+eF$
for some non-negative integers $a$, $b$, $c$, $d$, $e$.
Thus, to prove the~lemma, it~is~enough to show that
$$
\frac{1}{26}\int\limits_{0}^{\infty}\mathrm{vol}\big(-K_{\widetilde{X}}-uD\big)du<1
$$
for each $D\in\{\widetilde{H}, \widetilde{E}_1+\widetilde{E}_2, \widetilde{S}_1+\widetilde{S}_2, \widetilde{\mathscr{S}}, F\}$.
The computation in the first three cases~is~very similar to the~proof of Corollary~\ref{corollary:divisorial-stability}; we'll illustrate the cases $D=F$ and $D=\widetilde{\mathscr{S}}$.

First, we compute $\beta(F)$. Let $u$ be a~non-negative real number. Then
$$
-K_{\widetilde{X}}-uF\sim_{\mathbb{R}}3\widetilde{H}-\widetilde{E}_1-\widetilde{E}_2-uF\sim_{\mathbb{R}}\widetilde{\mathscr{S}}+\widetilde{S}_1+\widetilde{S}_2+(2-u)F,
$$
because $2\widetilde{S}_1\sim 2\widetilde{S}_2\sim \widetilde{H}-F$.
This shows that $-K_{\widetilde{X}}-uF$ is pseudo-effective $\iff$ $u\leqslant 2$.
Similarly, we see that $-K_{\widetilde{X}}-uF$ is nef $\iff$ $u\in[0,1]$.
Furthermore, if $u\in[1,2]$, then the~Zariski decomposition of this divisor can be described as follows:
$$
-K_{\widetilde{X}}-uF\sim_{\mathbb{R}}\underbrace{-K_{\widetilde{X}}-uF-(u-1)\widetilde{\mathscr{S}}}_{\text{positive part}}+\underbrace{(u-1)\widetilde{\mathscr{S}}}_{\text{negative part}}.
$$
Hence, we have
\begin{multline*}
\beta(F)=1-\frac{1}{26}\int\limits_{0}^{1}\big(-K_{\widetilde{X}}-uF\big)^2du-\frac{1}{26}\int\limits_{1}^{2}\big(-K_{\widetilde{X}}-uF-(u-1)\widetilde{\mathscr{S}}\big)^2du=\\
=1-\frac{1}{26}\int\limits_{0}^{1}\big(3\widetilde{H}-\widetilde{E}_1-\widetilde{E}_2-uF\big)^2du-\frac{1}{26}\int\limits_{1}^{2}\big((5-2u)\widetilde{H}-(2-u)\widetilde{E}_1-(2-u)\widetilde{E}_2-F\big)^2du=\\
=1-\frac{1}{26}\int\limits_{0}^{1}4u^3-18u^2+26du-\frac{1}{26}\int\limits_{1}^{2}12(u-2)^2du=\frac{1}{26}>0.
\end{multline*}

Now, we compute $\beta(\widetilde{\mathscr{S}})$. As above, we observe that
$$
-K_{\widetilde{X}}-u\widetilde{\mathscr{S}}\sim_{\mathbb{R}}
\frac{3-2u}{2}\widetilde{\mathscr{S}}+\frac{1}{2}\big(\widetilde{E}_1+\widetilde{E}_2\big)+\frac{3}{2}F.
$$
This shows that the~divisor $-K_{\widetilde{X}}-u\widetilde{\mathscr{S}}$ is pseudo-effective if and only if $u\leqslant\frac{3}{2}$.
If $u\in[0,1]$, then the~positive part of the~Zariski decomposition of this divisor is
$$
\frac{3-2u}{2}\big(\widetilde{\mathscr{S}}+F\big)+\frac{1}{2}\big(\widetilde{E}_1+\widetilde{E}_2\big),
$$
and the~negative part is $uF$.
If $1\leqslant u\leqslant\frac{3}{2}$, the~Zariski decomposition is the~following:
$$
-K_{\widetilde{X}}-u\widetilde{\mathscr{S}}
\sim_{\mathbb{R}}\underbrace{
\frac{3-2u}{2}\big(\widetilde{\mathscr{S}}+F+\widetilde{E}_1+\widetilde{E}_2\big)}_{\text{positive part}}+\underbrace{uF+(u-1)\big(\widetilde{E}_1+\widetilde{E}_2\big)}_{\text{negative part}}.
$$
Note that $\widetilde{\mathscr{S}}+F+\widetilde{E}_1+\widetilde{E}_2\sim 2\widetilde{H}$.
Integrating, we get
$\frac{1}{26}\int\limits_{0}^{\infty}\mathrm{vol}\big(-K_{\widetilde{X}}-u\widetilde{\mathscr{S}}\big)du=\frac{49}{104}$.
\end{proof}

Now we are ready to prove that $X$ is K-polystable.
Suppose  that $X$ is not K-polystable. By \cite[Corollary 4.14]{Zhuang},
there exists a~$\mathbf{G}$-invariant prime divisor $\mathbf{F}$ over $X$ with $\beta(\mathbf{F})\leqslant 0$.
Let $Z$ be its center on $\widetilde{X}$.
Then $Z$ is a~$\mathbf{G}$-invariant curve by
Lemmas~\ref{lemma:beth-special-curves} and \ref{lemma:beth-special-Eff-cone}.

\begin{lemma}
\label{lemma:beth-Z-not-F}
One has $Z\not\subset F$.
\end{lemma}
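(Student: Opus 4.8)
The plan is to rule out $Z\subset F$ by applying the Abban--Zhuang method to the flag $Z\subset F\subset\widetilde{X}$ and contradicting $\beta(\mathbf{F})\leqslant 0$. So assume $Z\subset F$. By Lemma~\ref{lemma:beth-G-action}(a), $F\cong\mathbb{P}^1\times\mathbb{P}^1$ and $Z$ is one of the two $\mathbf{G}$-invariant irreducible curves in $F$; both of them are smooth and lie in $|\widetilde{\mathscr{S}}\vert_{F}|$, and one of them is the curve $F\cap\widetilde{\mathscr{S}}$. In particular $Z\sim\widetilde{\mathscr{S}}\vert_{F}$.

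First I would collect the numerical data on $F$. Since $C_1$ and $C_2$ lie in the smooth locus of $Q$, they are disjoint from $\pi(C)=\mathrm{Sing}(Q)$; hence $\widetilde{E}_1$ and $\widetilde{E}_2$ are disjoint from $F$, so $\widetilde{E}_i\vert_{F}=0$. Writing $f_1,f_2$ for the rulings of $F$, the intersection table preceding Lemma~\ref{lemma:beth-special-Eff-cone} gives $\widetilde{H}\vert_{F}=f_1$ (a fibre of $\phi\vert_{F}\colon F\to C$), $F\vert_{F}\sim f_1-2f_2$, and therefore $\widetilde{\mathscr{S}}\vert_{F}\sim 2\widetilde{H}\vert_{F}-F\vert_{F}\sim f_1+2f_2$. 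Next I would recall from the proof of Lemma~\ref{lemma:beth-special-Eff-cone} that $-K_{\widetilde{X}}-uF$ is pseudoeffective exactly for $u\in[0,2]$, equals its positive part $P(u)$ and is nef for $u\in[0,1]$, and for $u\in[1,2]$ has positive part $P(u)=-K_{\widetilde{X}}-uF-(u-1)\widetilde{\mathscr{S}}$ and negative part $N(u)=(u-1)\widetilde{\mathscr{S}}$. Restricting, $P(u)\vert_{F}=(3-u)f_1+2uf_2$ for $u\in[0,1]$, and $P(u)\vert_{F}=(4-2u)f_1+2f_2$ for $u\in[1,2]$.

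Now I would run the inner computation. On $\mathbb{P}^1\times\mathbb{P}^1$ the nef and pseudoeffective cones coincide, so for $v\in[0,t(u)]$ the Zariski decomposition of $P(u)\vert_{F}-vZ$ has zero negative part; one finds $t(u)=u$ on $[0,1]$, $t(u)=1$ on $[1,\tfrac{3}{2}]$ and $t(u)=4-2u$ on $[\tfrac{3}{2},2]$, with $\big(P(u,v)\big)^2=\big(P(u)\vert_{F}-vZ\big)^2$ an explicit quadratic in $v$ on each piece. Integrating gives
$$
\frac{3}{26}\int_0^2\int_0^{t(u)}\big(P(u,v)\big)^2\,dv\,du=\frac{33}{104}.
$$
Since $N(u)\vert_{F}$ is supported on the curve $F\cap\widetilde{\mathscr{S}}$, the term $\frac{3}{26}\int_0^2\big(P(u)\vert_{F}\big)^2\,\mathrm{ord}_Z\big(N(u)\vert_{F}\big)\,du$ vanishes unless $Z=F\cap\widetilde{\mathscr{S}}$, in which case a short computation gives $\tfrac{2}{13}$. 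Hence in all cases
$$
S\big(W^{F}_{\bullet,\bullet};Z\big)\leqslant\frac{33}{104}+\frac{2}{13}=\frac{49}{104}<1.
$$

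Finally, as in the proof of Lemma~\ref{lemma:beth-special-Eff-cone} we have $A_X(F)=1$ (the morphism $\phi$ is crepant, being the blow up of the curve $C$ of $\mathrm{cA}_1$ points of $X$), so $S_X(F)=1-\beta(F)=\tfrac{25}{26}$ and $\frac{A_X(F)}{S_X(F)}=\frac{26}{25}>1$. Applying \cite[Theorem 3.3]{AbbanZhuang} and \cite[Corollary 1.109]{Book} to the flag $Z\subset F$, and using that $Z$ is a prime divisor on the smooth surface $F$, we obtain
$$
1\geqslant\frac{A_X(\mathbf{F})}{S_X(\mathbf{F})}\geqslant\min\Bigg\{\frac{A_X(F)}{S_X(F)},\frac{1}{S\big(W^{F}_{\bullet,\bullet};Z\big)}\Bigg\}\geqslant\min\Big\{\frac{26}{25},\frac{104}{49}\Big\}=\frac{26}{25}>1,
$$
a contradiction. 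Hence $Z\not\subset F$. The only step that is not a routine calculation is the identification of $\widetilde{\mathscr{S}}\vert_{F}$ as a divisor class on $F\cong\mathbb{P}^1\times\mathbb{P}^1$, for which one must use the explicit geometry of $\phi$ (through the intersection table and the diagram \eqref{equation:beth-quotient-long}); everything afterwards is bookkeeping, complicated only slightly by the two possibilities for the curve $Z$.
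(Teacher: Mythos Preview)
Your proof is correct and follows essentially the same route as the paper's: both apply Abban--Zhuang to the flag $Z\subset F$, identify $Z\sim\widetilde{\mathscr{S}}\vert_F$ as a $(1,2)$-class on $F\cong\mathbb{P}^1\times\mathbb{P}^1$, use the Zariski decomposition of $-K_{\widetilde{X}}-uF$ from Lemma~\ref{lemma:beth-special-Eff-cone}, and compute $S(W^{F}_{\bullet,\bullet};Z)\leqslant\tfrac{33}{104}+\tfrac{2}{13}=\tfrac{49}{104}<1$ against $S_X(F)=\tfrac{25}{26}$. The only cosmetic difference is notation ($f_1,f_2$ versus the paper's section--fibre basis $\mathbf{s},\mathbf{f}$) and your explicit remark that the nef and pseudoeffective cones on $\mathbb{P}^1\times\mathbb{P}^1$ coincide, which the paper uses without comment.
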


\begin{proof}
Let $\mathbf{s}$ be a~section of the~projection $F\to C$ such that $\mathbf{s}^2=0$,
and let $\mathbf{f}$ be its fiber. Then $\widetilde{E}_1\cap F=\varnothing$ and $\widetilde{E}_2\cap F=\varnothing$.
Moreover, we compute $F\vert_{F}\sim -2\mathbf{s}+\mathbf{f}$ and $\widetilde{H}\vert_{F}\sim\mathbf{f}$,
which gives $\widetilde{\mathscr{S}}\vert_{F}\sim 2\mathbf{s}+\mathbf{f}$.

Suppose that $Z\subset F$. Then $Z\sim 2\mathbf{s}+\mathbf{f}$, by Lemma \ref{lemma:beth-G-action}(a).
For every number $u\in[0,2]$, let $P(u)$ be the~positive part of the~Zariski decomposition of $-K_{\widetilde{X}}-uF$
described in the~proof of Lemma~\ref{lemma:beth-special-Eff-cone}, and let $N(u)$ be its negative part.
Take $v\in\mathbb{R}_{\geqslant 0}$. Then
$$
P(u)\big\vert_{F}-vZ=\left\{\aligned
&(2u-2v)\mathbf{s}+(3-u-v)\mathbf{f}\ \text{if $0\leqslant u\leqslant 1$}, \\
&(2-2v)\mathbf{s}+(4-2u-v)\mathbf{f}\ \text{if $1\leqslant u\leqslant 2$},
\endaligned
\right.
$$
and
$$
N(u)\big\vert_{F}=\left\{\aligned
&0\ \text{if $0\leqslant u\leqslant 1$}, \\
&(u-1)\widetilde{\mathscr{S}}\big\vert_{F}\ \text{if $1\leqslant u\leqslant 2$}.
\endaligned
\right.
$$
For $u\in[0,2]$, set
$t(u)=\mathrm{sup}\{v\in\mathbb{R}_{\geqslant 0}\ \vert\ \text{$P(u)\vert_{F}-vZ$ is pseudo-effective}\}$.
Then
$$
t(u)=\left\{\aligned
&u\ \text{if $0\leqslant u\leqslant 1$}, \\
&1\ \text{if $1\leqslant u\leqslant \frac{3}{2}$}, \\
&4-2u\ \text{if $\frac{3}{2}\leqslant u\leqslant 2$}.
\endaligned
\right.
$$
Moreover, it follows from \cite[Theorem 3.3]{AbbanZhuang} and \cite[Corollary 1.109]{Book} that
$$
1\geqslant\frac{A_X(\mathbf{F})}{S_X(\mathbf{F})}\geqslant\min\Bigg\{\frac{1}{S_X(F)},\frac{1}{S\big(W^{F}_{\bullet,\bullet};Z\big)}\Bigg\},
$$
where
$$
S\big(W^{F}_{\bullet,\bullet};Z\big)=
\frac{3}{26}\int\limits_0^2\big(P(u)\big\vert_{F}\big)^2\mathrm{ord}_{Z}\big(N(u)\big\vert_{F}\big)du+\frac{3}{26}\int\limits_0^2\int\limits_0^{t(u)}\big(P(u)\vert_{F}-vZ\big)^2dvdu.
$$
In the~proof of Lemma~\ref{lemma:beth-special-Eff-cone}, we computed $S_X(F)=\frac{25}{26}$, so $S(W^{F}_{\bullet,\bullet};Z)\geqslant 1$.
But
\begin{multline*}
S\big(W^{F}_{\bullet,\bullet};Z\big)=
\frac{3}{26}\int\limits_1^2\big(16-8u\big)^2(u-1)\mathrm{ord}_{Z}\big(\widetilde{\mathscr{S}}\big\vert_{F}\big)du+\\
+\frac{3}{26}\int\limits_0^1\int\limits_0^{u}4(3-u-v)(u-v)dvdu+\frac{3}{26}\int\limits_1^\frac{3}{2}\int\limits_0^{1}4(1-v)(4-2u-v)dvdu+\\
+\frac{3}{26}\int\limits_\frac{3}{2}^2\int\limits_0^{4-2u}4(1-v)(4-2u-v)dvdu=
\frac{2}{13}\mathrm{ord}_{Z}\big(\widetilde{\mathscr{S}}\big\vert_{F}\big)+\frac{33}{104}\leqslant \frac{49}{104}<1,
\end{multline*}
which is a contradiction
\end{proof}

\begin{lemma}
\label{lemma:beth-Z-not-S}
One has $Z\not\subset \widetilde{\mathscr{S}}$.
\end{lemma}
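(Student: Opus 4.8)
The plan is to argue by contradiction, applying the Abban--Zhuang method to the flag $Z\subset\widetilde{\mathscr{S}}$, in close analogy with the proof of Lemma~\ref{lemma:beth-Z-not-F}. Suppose $Z\subset\widetilde{\mathscr{S}}$. Since $Z$ is a $\mathbf{G}$-invariant irreducible curve, Lemma~\ref{lemma:beth-G-action}(b) forces $Z$ to be either $F\cap\widetilde{\mathscr{S}}$ or the proper transform of the curve $C^{\prime}$; the former would give $Z\subset F$, contradicting Lemma~\ref{lemma:beth-Z-not-F}, so $Z$ is the proper transform of $C^{\prime}$. In particular $Z$ lies in the linear system $|F\vert_{\widetilde{\mathscr{S}}}|$, and since $\pi(C^{\prime})$ is disjoint from $C_1$ and $C_2$, we also get $Z\cap\widetilde{E}_1=Z\cap\widetilde{E}_2=\varnothing$.

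Next I would record the intersection theory on $\widetilde{\mathscr{S}}\cong\mathbb{P}^1\times\mathbb{P}^1$. From $\widetilde{\mathscr{S}}\sim 2\widetilde{H}-\widetilde{E}_1-\widetilde{E}_2-F$ and the triple intersection table of $\widetilde{X}$ one checks that $F\vert_{\widetilde{\mathscr{S}}}$ is a ruling, which we call $\mathbf{f}$; writing $\mathbf{m}$ for the complementary ruling (so $\mathbf{f}^2=\mathbf{m}^2=0$ and $\mathbf{f}\cdot\mathbf{m}=1$), the same computation gives
$$\widetilde{E}_1\big\vert_{\widetilde{\mathscr{S}}}\sim\widetilde{E}_2\big\vert_{\widetilde{\mathscr{S}}}\sim\mathbf{f},\qquad \widetilde{H}\big\vert_{\widetilde{\mathscr{S}}}\sim\mathbf{f}+2\mathbf{m},\qquad \widetilde{\mathscr{S}}\big\vert_{\widetilde{\mathscr{S}}}\sim -\mathbf{f}+4\mathbf{m}.$$
Taking the positive part $P(u)$ and negative part $N(u)$ of the Zariski decomposition of $-K_{\widetilde{X}}-u\widetilde{\mathscr{S}}$ from the proof of Lemma~\ref{lemma:beth-special-Eff-cone} and restricting, one obtains $P(u)\vert_{\widetilde{\mathscr{S}}}\sim\mathbf{f}+(6-4u)\mathbf{m}$ for $u\in[0,1]$ and $P(u)\vert_{\widetilde{\mathscr{S}}}\sim(3-2u)\mathbf{f}+(6-4u)\mathbf{m}$ for $u\in[1,\tfrac{3}{2}]$, both nef on $\widetilde{\mathscr{S}}$. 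Since $Z$ is distinct from $F\cap\widetilde{\mathscr{S}}$ (by the above) and from the two curves $\widetilde{E}_i\cap\widetilde{\mathscr{S}}$ (as $Z\cap\widetilde{E}_i=\varnothing$), all of which are fibers of the $\mathbf{f}$-ruling, we get $Z\not\subset\mathrm{Supp}(N(u)\vert_{\widetilde{\mathscr{S}}})$, so $\mathrm{ord}_Z(N(u)\vert_{\widetilde{\mathscr{S}}})=0$.

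Subtracting $vZ\sim v\mathbf{f}$, one finds $t(u)=1$ on $[0,1]$ with $(P(u,v))^2=2(1-v)(6-4u)$, and $t(u)=3-2u$ on $[1,\tfrac{3}{2}]$ with $(P(u,v))^2=2(3-2u-v)(6-4u)$, with no further Zariski correction since all these $\mathbb{R}$-divisors stay nef. Integrating,
$$S\big(W^{\widetilde{\mathscr{S}}}_{\bullet,\bullet};Z\big)=\frac{3}{26}\int_0^1\int_0^1 2(1-v)(6-4u)\,dv\,du+\frac{3}{26}\int_1^{3/2}\int_0^{3-2u} 2(3-2u-v)(6-4u)\,dv\,du=\frac{3}{26}\Big(4+\tfrac{1}{4}\Big)=\frac{51}{104}.$$
On the other hand, by \cite[Theorem 3.3]{AbbanZhuang} and \cite[Corollary 1.109]{Book},
$$1\geqslant\frac{A_X(\mathbf{F})}{S_X(\mathbf{F})}\geqslant\min\Bigg\{\frac{1}{S_X(\widetilde{\mathscr{S}})},\frac{1}{S\big(W^{\widetilde{\mathscr{S}}}_{\bullet,\bullet};Z\big)}\Bigg\},$$
and since $S_X(\widetilde{\mathscr{S}})=\tfrac{49}{104}<1$ by the proof of Lemma~\ref{lemma:beth-special-Eff-cone}, this would force $S(W^{\widetilde{\mathscr{S}}}_{\bullet,\bullet};Z)\geqslant 1$, a contradiction.

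The only genuinely delicate point is the bookkeeping of the restrictions to $\widetilde{\mathscr{S}}$, and in particular checking that the proper transform of $C^{\prime}$ is a fourth fiber of the $\mathbf{f}$-ruling, distinct from $F\cap\widetilde{\mathscr{S}}$ and from the two curves $\widetilde{E}_i\cap\widetilde{\mathscr{S}}$, so that the negative-part contribution drops out and no secondary Zariski decomposition on $\widetilde{\mathscr{S}}$ is needed. Once these reductions are made, the remaining integral is elementary.
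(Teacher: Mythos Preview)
Your proof is correct and follows the same Abban--Zhuang flag computation as the paper. The only difference is organizational: the paper does not invoke Lemma~\ref{lemma:beth-Z-not-F} to discard the case $Z=F\cap\widetilde{\mathscr{S}}$ in advance, but instead treats both $\mathbf{G}$-invariant curves at once by carrying the term $\mathrm{ord}_Z(F\vert_{\widetilde{\mathscr{S}}})\in\{0,1\}$ through the integral, obtaining $S(W^{\widetilde{\mathscr{S}}}_{\bullet,\bullet};Z)=\tfrac{49\,\mathrm{ord}_Z(F\vert_{\widetilde{\mathscr{S}}})+51}{104}\leqslant\tfrac{25}{26}$; your value $\tfrac{51}{104}$ is exactly the $\mathrm{ord}_Z=0$ case of this formula. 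Your route is slightly cleaner for this lemma since the negative-part contribution drops out, at the cost of making the argument depend on Lemma~\ref{lemma:beth-Z-not-F}; the paper's version keeps the two lemmas logically independent.
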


\begin{proof}
Set $\mathbf{s}=F\vert_{\widetilde{\mathscr{S}}}$.
Then $\mathbf{s}$ is a ruling of $\widetilde{\mathscr{S}}\cong\mathbb{P}^1\times\mathbb{P}^1$.
Let $\mathbf{f}$ be a~ruling of this surface such that $\mathbf{s}\cdot \mathbf{f}=1$ and $\mathbf{f}^2=0$.
Then $\widetilde{E}_1\vert_{\widetilde{\mathscr{S}}}\sim \widetilde{E}_2\vert_{\widetilde{\mathscr{S}}}\sim \mathbf{s}$,
$\widetilde{H}\vert_{\widetilde{\mathscr{S}}}\sim\mathbf{s}+2\mathbf{f}$, $\widetilde{\mathscr{S}}\vert_{\widetilde{\mathscr{S}}}\sim -\mathbf{s}+4\mathbf{f}$.

Suppose $Z\subset \widetilde{\mathscr{S}}$. Then $Z\sim \mathbf{s}$ by Lemma~\ref{lemma:beth-G-action}(b).
For $u\in[0,\frac{3}{2}]$, let $P(u)$ be the~positive part of the~Zariski decomposition of $-K_{\widetilde{X}}-u\widetilde{\mathscr{S}}$
described in the~proof of Lemma~\ref{lemma:beth-special-Eff-cone}, and let $N(u)$ be its negative part.
Take $v\in\mathbb{R}_{\geqslant 0}$. Then
$$
P(u)\big\vert_{\widetilde{\mathscr{S}}}-vZ=\left\{\aligned
&(1-v)\mathbf{s}+(6-4u)\mathbf{f} \ \text{if $0\leqslant u\leqslant 1$}, \\
&(3-2u-v)\mathbf{s}+(6-4u)\mathbf{f}\ \text{if $1\leqslant u\leqslant \frac{3}{2}$},
\endaligned
\right.
$$
and
$$
N(u)\big\vert_{\widetilde{\mathscr{S}}}=\left\{\aligned
&u\mathbf{s}\ \text{if $0\leqslant u\leqslant 1$}, \\
&u\mathbf{s}+(u-1)\widetilde{E}_1\big\vert_{\widetilde{\mathscr{S}}}+(u-1)\widetilde{E}_1\big\vert_{\widetilde{\mathscr{S}}}\ \text{if $1\leqslant u\leqslant \frac{3}{2}$}.
\endaligned
\right.
$$
Note that $Z\not\subset \widetilde{E}_1\cup \widetilde{E}_2$, because $Z$ is $\mathbf{G}$-invariant.

For $u\in[0,\frac{3}{2}]$, set
$t(u)=\mathrm{sup}\{v\in\mathbb{R}_{\geqslant 0}\ \vert\ \text{$P(u)\vert_{\widetilde{\mathscr{S}}}-vZ$ is pseudo-effective}\}$.
Then
$$
t(u)=\left\{\aligned
&1\ \text{if $0\leqslant u\leqslant 1$}, \\
&3-2u\ \text{if $1\leqslant u\leqslant \frac{3}{2}$}.
\endaligned
\right.
$$
Moreover, it follows from \cite[Theorem 3.3]{AbbanZhuang} and \cite[Corollary 1.109]{Book} that
$$
1\geqslant\frac{A_X(\mathbf{F})}{S_X(\mathbf{F})}\geqslant\min\Bigg\{\frac{1}{S_X(\widetilde{\mathscr{S}})},\frac{1}{S\big(W^{\widetilde{\mathscr{S}}}_{\bullet,\bullet};Z\big)}\Bigg\},
$$
where
$$
S\big(W^{\widetilde{\mathscr{S}}}_{\bullet,\bullet};Z\big)=
\frac{3}{26}\int\limits_0^\frac{3}{2}\big(P(u)\big\vert_{\widetilde{\mathscr{S}}}\big)^2\mathrm{ord}_{Z}\big(N(u)\big\vert_{\widetilde{\mathscr{S}}}\big)du+\frac{3}{26}\int\limits_0^\frac{3}{2}\int\limits_0^{t(u)}\big(P(u)\vert_{\widetilde{\mathscr{S}}}-vZ\big)^2dvdu.
$$
We know from the~proof of Lemma~\ref{lemma:beth-special-Eff-cone} that $S_X(\widetilde{\mathscr{S}})=\frac{49}{54}$.
Then $S(W^{\widetilde{\mathscr{S}}}_{\bullet,\bullet};Z)\geqslant 1$. But
\begin{multline*}
S\big(W^{\widetilde{\mathscr{S}}}_{\bullet,\bullet};Z\big)=\frac{3}{26}\int\limits_0^14u(3-2u)\mathrm{ord}_{Z}\big(\mathbf{s}\big)du+\frac{3}{26}\int\limits_1^\frac{3}{2}4u(3-2u)^2\mathrm{ord}_{Z}\big(\mathbf{s}\big)du+\\
+\frac{3}{26}\int\limits_0^1\int\limits_0^{1}4(1-v)(3-2u)dvdu+\frac{3}{26}\int\limits_1^\frac{3}{2}\int\limits_0^{3-2u}4(3-2u)(3-2u-v)dvdu=\frac{49\mathrm{ord}_{Z}\big(\mathbf{s}\big)+51}{104},
\end{multline*}
which implies that $S(W^{\widetilde{\mathscr{S}}}_{\bullet,\bullet};Z)\leqslant \frac{25}{26}$. This is a contradiction.
\end{proof}

By Lemmas~\ref{lemma:beth-Z-not-F} and \ref{lemma:beth-Z-not-S},
we have $Z\not\subset F\cup\widetilde{\mathscr{S}}$.
But $Z\subset F\cup\widetilde{\mathscr{S}}$ by Lemma~\ref{lemma:beth-special-curves}.
The~obtained contradiction implies that the Fano threefold $X$ is K-polystable, completing the proof of Theorem \ref{theorem:K-stability-non-isolated}

\section{Threefolds in the~class $(\aleph)$}
\label{section:aleph}

Let $X$ be the~following complete intersection in $\mathbb{P}^1\times\mathbb{P}^1\times\mathbb{P}^4$:
$$
\big\{u_1x=v_1y, u_2z=v_2t, w^2+r(x+y)^2+r(z+t)^2=(2s+2)(xt+yz)+(2s-2)(xz+yt)\big\},
$$
where $[r:s]\in\mathbb{P}^1$, and $([u_1:v_1],[u_2:v_2],[x:y:z:t:w])$ are coordinates on $\mathbb{P}^1\times\mathbb{P}^1\times\mathbb{P}^4$.
For every $[r:s]$, the~threefold $X$ is singular at the~following two points
\begin{align*}
\big([1:1],[-1:1],[0:0:-1:1:0]\big),\\
\big([-1:1],[1:1],[-1:1:0:0:0]\big),
\end{align*}
which are both isolated threefold nodes ($\mathrm{cA}_1$) in $X$ if $[r:s]\ne [0:1]$.
Furthermore, if~$[r:s]=[\pm 1:1]$, then $X$ has an additional isolated threefold node ($\mathrm{cA}_1$) at the~point
$$
\big([1:1],[1:1],[1:1:\pm 1:\pm 1:0]\big).
$$
Finally, if~$[r:s]=[0:1]$, then $\mathrm{Sing}(X)$ consists of the~following two curves:
\begin{align*}
\big\{x=0&,y=0,w=0,u_1z+v_1t=0, u_2z-v_2t=0, v_1u_2+u_1v_2=0\big\},\\
\big\{z=0&,t=0,w=0,u_1x-v_1y=0, u_2x+v_2y=0, v_1u_2+u_1v_2=0\big\}.
\end{align*}
This is a~very special case (see Section~\ref{section:aleph-0} for more details).

Let $Q$ be the~quadric $\{w^2+r(x+y)^2+r(z+t)^2=(2s+2)(xt+yz)+(2s-2)(xz+yt)\}\subset\mathbb{P}^4$.
Then $Q$ is singular $\iff$ $[r:s]=[\pm 1:1]$. If $[r:s]=[\pm 1:1]$, then
$$
\mathrm{Sing}(Q)=\big[1:1:\pm 1:\pm 1:0\big].
$$
Let $\pi\colon{X}\to{Q}$ be the~birational morphism that is induced by the~projection ${X}\to\mathbb{P}^4$.
Then $\pi$ is a blow up of the~following two singular conics:
\begin{align*}
{C}_1&=\{x=0,y=0,w^2+r(z+t)^2=0\},\\
{C}_2&=\{z=0,t=0,w^2+r(x+y)^2=0\}.
\end{align*}
These conics are contained in the~smooth locus of the~quadric $Q$,
and they are reduced unless $[r:s]=[0:1]$. If $[r:s]=[0:1]$, both conics are double lines.

As in Section~\ref{section:beth}, we see that $\mathrm{Aut}(X)$ contains the~following commuting involutions:
\begin{align*}
\tau_1\colon \big([u_1:v_1],[u_2:v_2],[x:y:z:t:w]\big) &\mapsto \big([v_1:u_1],[v_2:u_2],[y:x:t:z:w]\big),\\
\tau_2\colon \big([u_1:v_1],[u_2:v_2],[x:y:z:t:w]\big) &\mapsto \big([u_2:v_2],[u_1:v_1],[z:t:x:y:w]\big),\\
\tau_3\colon \big([u_1:v_1],[u_2:v_2],[x:y:z:t:w]\big) &\mapsto \big([u_1:v_1],[u_2:v_2],[x:y:z:t:-w]\big).
\end{align*}
Note that $\pi$ is $\mathrm{Aut}(X)$-equivariant, and these involutions act on $\mathbb{P}^4$ as in Section~\ref{section:GIT-2}:
\begin{align*}
\tau_1\colon [x:y:z:t:w] &\mapsto [y:x:t:z:w],\\
\tau_2\colon [x:y:z:t:w] &\mapsto [z:t:x:y:w],\\
\tau_3\colon [x:y:z:t:w] &\mapsto [x:y:z:t:-w].
\end{align*}
These involutions generate a subgroup in  $\mathrm{Aut}(X)$ isomorphic to $\mumu_2^3$.
But $\mathrm{Aut}(X)$ is larger.
In fact, it is infinite.
Indeed, consider the~monomorphism $\mathbb{C}^\ast\hookrightarrow\mathrm{PGL}_4(\mathbb{C})$ given~by
$$
\lambda\mapsto\begin{pmatrix}
\frac{1+\lambda}{2} & \frac{1-\lambda}{2} & 0 & 0 & 0\\
 \frac{1-\lambda}{2} & \frac{1+\lambda}{2} & 0 & 0 & 0\\
 0 & 0 & \frac{\lambda +1}{2 \lambda} & \frac{\lambda -1}{2 \lambda} & 0\\
 0 & 0 & \frac{\lambda -1}{2 \lambda} & \frac{\lambda +1}{2 \lambda} & 0\\
 0 & 0 & 0 & 0 & 1
\end{pmatrix}.
$$
Note that this $\mathbb{C}^\ast$-action leaves $Q$ invariant. This gives a~monomorphism $\mathbb{C}^\ast\hookrightarrow\mathrm{Aut}(Q)$.
Since this $\mathbb{C}^\ast$-action also leaves both planes $\{x=0,y=0\}$ and $\{z=0,t=0\}$ invariant,
it leaves both conics $C_1$ and $C_2$ invariant, because
\begin{align*}
C_1&=Q\cap\{x=0,y=0\},\\
C_2&=Q\cap\{z=0,t=0\}.
\end{align*}
Thus the~$\mathbb{C}^\ast$-action lifts to $X$. Let $\Gamma\cong\mathbb{C}^\ast$ be the~corresponding subgroup in $\mathrm{Aut}(X)$.

Let $\mathbf{G}$ be the~subgroup in $\mathrm{Aut}(X)$ generated by $\Gamma$ together with $\tau_1$, $\tau_2$, $\tau_3$. Then
$$
\mathbf{G}\cong\big(\mathbb{C}^\ast\rtimes\mumu_2\big)\times\mumu_2,
$$
because $\tau_1\in\Gamma$, and $\tau_3\circ\lambda=\lambda\circ\tau_3$ and $\tau_2\circ\lambda\circ\tau_2=\lambda^{-1}$ for every $\lambda\in\Gamma$.

\begin{lemma}
\label{lemma:aleph-G-gixed-points}
If $[r:s]\ne [\pm 1:1]$, then the~Fano threefold $X$ does not have $\mathbf{G}$-fixed points.
If $[r:s]=[\pm 1:1]$, then the~only $\mathbf{G}$-fixed point in $X$ is the~singular point
$$
\big([1:1],[1:1],[1:1:\pm 1:\pm 1:0]\big).
$$
\end{lemma}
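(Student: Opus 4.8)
The plan is to push the whole computation down to the quadric $Q$ via the $\mathbf{G}$-equivariant morphism $\pi\colon X\to Q$, and to find $Q^{\mathbf{G}}$ by first diagonalising the action of $\Gamma$ on $\mathbb{P}^4$. Passing to the coordinates $x+y$, $x-y$, $z+t$, $z-t$, $w$, a short computation with the matrix defining $\Gamma$ shows that $\Gamma$ acts by
$$
\big[x+y:x-y:z+t:z-t:w\big]\longmapsto\big[x+y:\lambda(x-y):z+t:\tfrac{1}{\lambda}(z-t):w\big],
$$
so the fixed locus of $\Gamma$ in $\mathbb{P}^4$ is the plane $\Pi=\{x=y,\ z=t\}$ together with the two isolated points $[1:-1:0:0:0]$ and $[0:0:1:-1:0]$. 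The involution $\tau_2$ interchanges these two isolated points, hence neither is $\mathbf{G}$-fixed; and on $\Pi$, writing a point as $[x:x:z:z:w]$, the involutions $\tau_2$ and $\tau_3$ act by $[x:z:w]\mapsto[z:x:w]$ and $[x:z:w]\mapsto[x:z:-w]$, whose common fixed points are $[1:1:1:1:0]$, $[1:1:-1:-1:0]$ and $[0:0:0:0:1]$. Since $\tau_1\in\Gamma$, the group $\mathbf{G}$ is generated by $\Gamma$, $\tau_2$ and $\tau_3$, so these three points are exactly the $\mathbf{G}$-fixed points of $\mathbb{P}^4$.

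Next I would substitute these three points into the equation of $Q$. One finds that $[0:0:0:0:1]\notin Q$ for every $[r:s]$, that $[1:1:1:1:0]\in Q$ if and only if $r=s$, and that $[1:1:-1:-1:0]\in Q$ if and only if $r=-s$. Therefore $Q$ has no $\mathbf{G}$-fixed point when $[r:s]\ne[\pm1:1]$, while for $[r:s]=[\pm1:1]$ the unique $\mathbf{G}$-fixed point of $Q$ is $[1:1:\pm1:\pm1:0]$, which is exactly $\mathrm{Sing}(Q)$ in that case.

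Finally I would transfer this to $X$. Since $\pi$ is $\mathbf{G}$-equivariant, the image under $\pi$ of a $\mathbf{G}$-fixed point of $X$ is a $\mathbf{G}$-fixed point of $Q$, so $X$ has no $\mathbf{G}$-fixed point when $[r:s]\ne[\pm1:1]$. When $[r:s]=[\pm1:1]$, the point $[1:1:\pm1:\pm1:0]$ does not lie on $C_1\cup C_2$, so $\pi$ is an isomorphism over it; its unique preimage, obtained by solving $u_1x=v_1y$ and $u_2z=v_2t$, is $([1:1],[1:1],[1:1:\pm1:\pm1:0])$, which is thus the unique $\mathbf{G}$-fixed point of $X$ and coincides with the singular point in the statement. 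The one place that needs a little care is making sure the list of $\mathbf{G}$-fixed points of $\mathbb{P}^4$ is complete; cutting $\mathbb{P}^4$ down to $\Pi$ and two isolated points by the $\Gamma$-action first makes this routine, so I do not expect a serious obstacle.
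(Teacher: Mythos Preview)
Your proof is correct and follows essentially the same strategy as the paper: reduce to finding the $\mathbf{G}$-fixed points of $\mathbb{P}^4$ by diagonalising the $\Gamma$-action, then check which of them lie on $Q$ and pull back along $\pi$. The only difference is cosmetic: the paper uses the more refined coordinate change $\mathbf{x}=x-y$, $\mathbf{y}=x+y-z-t$, $\mathbf{z}=z-t$, $\mathbf{t}=x+y+z+t$, $\mathbf{w}=w$, which simultaneously diagonalises $\Gamma$ and $\tau_2$ and puts $Q$ in the diagonal form $2\mathbf{x}\mathbf{z}+\tfrac{r+s}{2}\mathbf{y}^2+\tfrac{r-s}{2}\mathbf{t}^2+\mathbf{w}^2=0$, so the three fixed points and their membership in $Q$ can be read off immediately; your coarser change $(x+y,x-y,z+t,z-t,w)$ diagonalises only $\Gamma$ and requires the extra (but easy) step of intersecting with the $\tau_2,\tau_3$-fixed locus on $\Pi$. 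You are also slightly more explicit than the paper about the final transfer from $Q$ to $X$, which is a virtue.
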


\begin{proof}
To simplify the~$\mathbf{G}$-action, let us introduce new coordinates on $\mathbb{P}^4$ as follows:
$$
\left\{\aligned
&\mathbf{x}=x-y, \\
&\mathbf{y}=x+y-z-t, \\
&\mathbf{z}=z-t, \\
&\mathbf{t}=x+y+z+t, \\
&\mathbf{w}=w.
\endaligned
\right.
$$
In new coordinates, the~defining equation of the~quadric $Q$ simplifies to
$$
2\mathbf{x}\mathbf{z}+\frac{r+s}{2}\mathbf{y}^2+\frac{r-s}{2}\mathbf{t}^2+\mathbf{w}^2=0.
$$
Moreover, we have ${C}_1=\{\mathbf{x}=0,\mathbf{y}+\mathbf{t}=0\}\cap Q$ and ${C}_2=\{\mathbf{z}=0,\mathbf{y}-\mathbf{t}=0\}\cap Q$.
The~involutions $\tau_2$ and $\tau_3$ act as
\begin{align*}
\tau_2\colon \big[\mathbf{x}:\mathbf{y}:\mathbf{z}:\mathbf{t}:\mathbf{w}\big]&\mapsto\big[\mathbf{z}:-\mathbf{y}:\mathbf{x}:\mathbf{t}:\mathbf{w}\big],\\
\tau_3\colon \big[\mathbf{x}:\mathbf{y}:\mathbf{z}:\mathbf{t}:\mathbf{w}\big]&\mapsto\big[\mathbf{x}:\mathbf{y}:\mathbf{z}:\mathbf{t}:-\mathbf{w}\big],
\end{align*}
and the~action of the~group $\Gamma\cong\mathbb{C}^\ast$ simplifies to
$$
[\mathbf{x}:\mathbf{y}:\mathbf{z}:\mathbf{t}:\mathbf{w}]\mapsto\Big[\lambda\mathbf{x}:\mathbf{y}:\frac{\mathbf{z}}{\lambda}:\mathbf{t}:\mathbf{w}\Big],
$$
where $\lambda\in\mathbb{C}^\ast$. In new coordinates, the~$\mathbf{G}$-fixed points in $\mathbb{P}^4$ are
\begin{align*}
[0:1:0:0:0],\\
[0:0:0:1:0],\\
[0:0:0:0:1].
\end{align*}
Note that $[0:1:0:0:0]\in Q\iff r+s=0$, and $[0:0:0:1:0]\in Q\iff r-s=0$.
Moreover, we have $[0:0:0:0:1]\not\in Q$ for every $[r:s]\in\mathbb{P}^1$, so the~assertion follows.
\end{proof}

Arguing as in the~proof of Lemma~\ref{lemma:aleph-G-gixed-points}, we obtain the~following result.

\begin{lemma}
\label{lemma:ODP-G-action-aleph}
Suppose that $[r:s]=[\pm 1:1]$. Let $O=([1:1],[1:1],[1:1:\pm 1:\pm 1:0])$,
let $\varphi\colon\widetilde{X}\to X$ be the~blow up of the~point $O$, and let $F$ be the~$\varphi$-exceptional~surface.
Then
\begin{itemize}
\item[$(1)$] $\varphi$ is $\mathbf{G}$-equivariant,
\item[$(2)$] the~group $\mathbf{G}$ acts faithfully on $F$,
\item[$(3)$] $\mathrm{rk}\,\mathrm{Pic}^{\mathbf{G}}(F)=1$,
\item[$(4)$] $F$ does not contain $\mathbf{G}$-fixed points,
\item[$(5)$] $F\cong\mathbb{P}^1\times\mathbb{P}^1$ contains exactly two $\mathbf{G}$-invariant irreducible curves, \\which have bidegree $(1,1)$.
\end{itemize}
\end{lemma}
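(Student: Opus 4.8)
The plan is to follow the template of the proof of Lemma~\ref{lemma:aleph-G-gixed-points}: work in the coordinates $[\mathbf{x}:\mathbf{y}:\mathbf{z}:\mathbf{t}:\mathbf{w}]$ introduced there, identify $F$ with an explicit quadric surface on which the $\mathbf{G}$-action is visibly linear, and then read off all five assertions. I would treat $[r:s]=[1:1]$; the case $[r:s]=[-1:1]$ is entirely analogous, only changing which coordinate is eliminated and the explicit shape of $\tau_2$. First I would note that $\pi(O)=\mathrm{Sing}(Q)$ has its first two (and its third and fourth) homogeneous coordinates nonzero, so $\pi(O)\notin C_1\cup C_2$ and $\pi$ is a local isomorphism near $O$; hence $F$ is $\mathbf{G}$-equivariantly identified with the projectivized tangent cone to $Q$ at $\mathrm{Sing}(Q)$. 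Since $O$ is the unique $\mathbf{G}$-fixed point of $X$ by Lemma~\ref{lemma:aleph-G-gixed-points} and $\varphi$ is the blow up of that point, assertion (1) follows. In the coordinates of Lemma~\ref{lemma:aleph-G-gixed-points} the quadric is $2\mathbf{x}\mathbf{z}+\tfrac{r+s}{2}\mathbf{y}^2+\tfrac{r-s}{2}\mathbf{t}^2+\mathbf{w}^2=0$, so for $[r:s]=[1:1]$ it becomes $2\mathbf{x}\mathbf{z}+r\mathbf{y}^2+\mathbf{w}^2=0$ with $\mathrm{Sing}(Q)=[0:0:0:1:0]$; in the chart $\mathbf{t}=1$ this equation is already homogeneous quadratic in $\mathbf{x},\mathbf{y},\mathbf{z},\mathbf{w}$, hence equals its own tangent cone, and since $r\ne 0$ its rank is $4$. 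Therefore $F$ is the smooth quadric surface $\{2\mathbf{x}\mathbf{z}+r\mathbf{y}^2+\mathbf{w}^2=0\}\subset\mathbb{P}^3$, which is isomorphic to $\mathbb{P}^1\times\mathbb{P}^1$, giving the first half of (5), and $\mathbf{G}$ acts on it through the linear action on $[\mathbf{x}:\mathbf{y}:\mathbf{z}:\mathbf{w}]$ obtained by restricting the formulas of Lemma~\ref{lemma:aleph-G-gixed-points}.

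For (2), note that $F$ spans $\mathbb{P}^3$, so an element of $\mathbf{G}$ acting trivially on $F$ acts trivially on $\mathbb{P}^3$; a short direct check that the linear $\mathbf{G}$-representation on $[\mathbf{x}:\mathbf{y}:\mathbf{z}:\mathbf{w}]$ is faithful modulo scalars (for instance $\tau_3$ acts by $\mathbf{w}\mapsto-\mathbf{w}$, $\tau_2$ interchanges $\mathbf{x}$ and $\mathbf{z}$ and negates $\mathbf{y}$, and $\Gamma$ acts by $\mathrm{diag}(\lambda,1,\lambda^{-1},1)$, which is scalar only for $\lambda=1$) then shows $\mathbf{G}\hookrightarrow\mathrm{Aut}(F)$. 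Next I would fix the standard identification $F\cong\mathbb{P}^1_{[a:b]}\times\mathbb{P}^1_{[c:d]}$, obtained by factoring $r\mathbf{y}^2+\mathbf{w}^2$ over $\mathbb{C}$, in which $[a:b]$ and $[c:d]$ are eigencoordinates for $\Gamma$; a direct computation shows that in these coordinates $\Gamma$ acts by the diagonal torus $([a:b],[c:d])\mapsto([\lambda a:b],[\lambda c:d])$, while $\tau_3$ becomes the factor interchange $([a:b],[c:d])\mapsto([c:d],[a:b])$ and $\tau_2$ becomes an explicit involution. Since $\tau_3$ exchanges the two rulings, $\mathrm{Pic}^{\mathbf{G}}(F)$ is generated by the bidegree-$(1,1)$ class, which is (3).

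Finally, the $\Gamma$-fixed points of $F$ are exactly the four points $([1:0],[1:0])$, $([1:0],[0:1])$, $([0:1],[1:0])$, $([0:1],[0:1])$; of these, $\tau_3$ fixes only $([1:0],[1:0])$ and $([0:1],[0:1])$, and $\tau_2$ sends $([1:0],[1:0])$ to $([0:1],[0:1])$, so $F$ has no $\mathbf{G}$-fixed point, giving (4). For the count of invariant curves, any irreducible $\mathbf{G}$-invariant curve is in particular $\Gamma$-invariant, hence is the closure of a one-dimensional $\Gamma$-orbit, and these closures are precisely the four coordinate rulings $\{a=0\},\{b=0\},\{c=0\},\{d=0\}$ together with the pencil $D_\kappa=\{ad=\kappa bc\}$ of bidegree-$(1,1)$ curves, $\kappa\in\mathbb{C}^\ast$. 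The rulings are permuted nontrivially by $\tau_3$ and so are not $\mathbf{G}$-invariant; since $\tau_3(D_\kappa)=D_{1/\kappa}$, invariance forces $\kappa=\pm1$, and a direct check shows that $\tau_2$ preserves both $D_1$ and $D_{-1}$. Hence $D_1$ and $D_{-1}$ are exactly the $\mathbf{G}$-invariant irreducible curves of $F$, and each, being an irreducible bidegree-$(1,1)$ curve on $\mathbb{P}^1\times\mathbb{P}^1$, is a smooth rational curve; this proves (5) and completes the lemma. The only slightly delicate step is choosing the coordinates on $F$ that simultaneously diagonalize $\Gamma$ and turn $\tau_3$ into the factor swap — with that in hand, assertions (3), (4) and (5) reduce to the orbit-closure classification plus a few one-line verifications, and the two sign choices $[r:s]=[\pm1:1]$ are handled identically.
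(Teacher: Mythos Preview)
Your proof is correct and follows the same overall strategy as the paper: reduce to the case $[r:s]=[1:1]$, use the coordinates from Lemma~\ref{lemma:aleph-G-gixed-points} to identify $F$ with the explicit smooth quadric surface in $\mathbb{P}^3$, and then read off assertions (1)--(5) from the resulting linear $\mathbf{G}$-action. The paper's write-up is terser, but the setup is identical.

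The one genuine methodological difference is in how you establish (5). The paper argues as follows: since $\Gamma$ acts faithfully on any $\mathbf{G}$-invariant irreducible curve $Z$, and since $\tau_2$ conjugates $\lambda\in\Gamma$ to $\lambda^{-1}$, the involution $\tau_2$ must have a fixed point on $Z\cong\mathbb{P}^1$; one then computes the four $\tau_2$-fixed points of $F$ directly (two lie in $\{\mathbf{y}=0\}$, two in $\{\mathbf{w}=0\}$) and observes that the $\Gamma$-orbit closure through any of them is $F\cap\{\mathbf{y}=0\}$ or $F\cap\{\mathbf{w}=0\}$. Your route instead classifies all one-dimensional $\Gamma$-orbit closures, uses $\tau_3$ (the factor swap) to cut down to the two curves $D_{\pm 1}$, and then checks $\tau_2$-invariance. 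Both arguments are short and valid; yours is more systematic, while the paper's avoids having to pin down exactly how $\tau_3$ acts on the rulings. Note that your $D_1$ and $D_{-1}$ are precisely the hyperplane sections $F\cap\{\mathbf{w}=0\}$ and $F\cap\{\mathbf{y}=0\}$, which makes the ``direct check'' for $\tau_2$-invariance immediate from the formula $\tau_2\colon[\mathbf{x}:\mathbf{y}:\mathbf{z}:\mathbf{w}]\mapsto[\mathbf{z}:-\mathbf{y}:\mathbf{x}:\mathbf{w}]$.
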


\begin{proof}
Let us only consider the~case when $[r:s]=[1:1]$; the other case is analogous.
We~use the setup and notation introduced in the~proof of Lemma~\ref{lemma:aleph-G-gixed-points}.
It follows from this proof that  we can $\mathbf{G}$-equivariantly identify the surface $F$ with the quadric surface
$$
\big\{\mathbf{w}^2+2\mathbf{x}\mathbf{z}+\mathbf{y}^2=0\big\}\subset\mathbb{P}^3,
$$
where we consider $[\mathbf{x}:\mathbf{y}:\mathbf{z}:\mathbf{w}]$ as coordinates on $\mathbb{P}^3$.
Now our claims are easy to check.

For instance, if $Z$ is a $\mathbf{G}$-invariant irreducible curve in $F$,
then $\Gamma$ acts faithfully on $Z$, which implies that $\tau_2$ fixes a point on $Z$.
Then, analyzing $\langle\tau_2\rangle$-fixed points in $F$,
we see that either $Z=F\cap\{\mathbf{y}=0\}$ or $Z=F\cap\{\mathbf{w}=0\}$ as claimed.
\end{proof}

We have the~following $G$-equivariant diagram:
$$
\xymatrix{
&X\ar@{->}[dl]_{\pi}\ar@{->}[dr]^{\eta}&\\%
{Q}&&\mathbb{P}^1\times\mathbb{P}^1}
$$
where $\eta$ is the~conic bundle that is given by the~natural projection $\mathbb{P}^1\times\mathbb{P}^1\times\mathbb{P}^4\to \mathbb{P}^1\times\mathbb{P}^1$.
Let $\Delta$ be its discriminant~curve in $\mathbb{P}^1\times\mathbb{P}^1$.
Then $\Delta=\Delta_1+\Delta_2$, where
\begin{align*}
\Delta_1&=\big\{(r-s+1)u_1u_2+(r-s-1)u_1v_2+(r-s-1)u_2v_1+(r-s+1)v_1v_2=0\big\},\\
\Delta_2&=\big\{(r+s-1)u_1u_2+(r+s+1)u_1v_2+(r+s+1)u_2v_1+(r+s-1)v_1v_2=0\big\}.
\end{align*}
If $[r:s]\ne[0:1]$, then they meet transversally at  $([1:-1],[1:1])$ and $([1:1],[1:-1])$.
The curves $\Delta_1$ and $\Delta_2$ are smooth for $[r:s]\ne [\pm 1:1]$.
If $[r:s]=[1:1]$, then
\begin{align*}
\Delta_1&=\big\{(u_1-v_1)(u_2-v_2)=0\big\},\\
\Delta_2&=\big\{u_1u_2+3u_1v_2+3u_2v_1+v_1v_2=0\big\}.
\end{align*}
Similarly, if $[r:s]=[1:-1]$, then
\begin{align*}
\Delta_1&=\big\{3u_1u_2+u_1v_2+u_2v_1+3v_1v_2=0\big\},\\
\Delta_2&=\big\{(u_1-v_1)(u_2-v_2)=0\big\}.
\end{align*}
Finally, if $[r:s]=[0:1]$, then $\Delta_1=\Delta_2$. We will deal with this case in Section~\ref{section:aleph-0}.

\begin{remark}
\label{remark:aleph-P1-P1-fixed-points}
The~only $\mathbf{G}$-fixed points in $\mathbb{P}^1\times\mathbb{P}^1$ are $([1:1],[1:1])$ and $([1:-1],[1:-1])$.
\end{remark}

Suppose $[r:s]\ne[0:1]$.
Let $E_1$ and $E_2$ be the~$\langle\tau_3\rangle$-irreducible reducible $\pi$-exceptional surfaces such that $\pi(E_1)=C_1$ and $\pi(E_2)=C_2$.
Set $H=\pi^*(\mathcal{O}_Q(1))$.
Then
$$
\mathrm{Pic}^{\mathbf{G}}(X)=\mathrm{Cl}^{\mathbf{G}}(X)=\mathbb{Z}[H]\oplus\mathbb{Z}[E_1+E_2].
$$
This easily follows from Lemma~\ref{lemma:ODP-G-action-aleph}, since  $C_1+C_2$ is $\mathbf{G}$-irreducible.

\begin{lemma}
\label{lemma:divisorial-stability-aleph}
Let $S$ be any~$\mathbf{G}$-invariant irreducible surface in $X$. Then $\beta(S)>0$.
\end{lemma}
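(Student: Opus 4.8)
The plan is to follow the strategy of Corollary~\ref{corollary:divisorial-stability}. The first step is to describe the cone of $\mathbf{G}$-invariant effective divisors on $X$. We have $\mathrm{Cl}^{\mathbf{G}}(X)=\mathrm{Pic}^{\mathbf{G}}(X)=\mathbb{Z}[H]\oplus\mathbb{Z}[E_1+E_2]$, and the conic bundle $\eta\colon X\to\mathbb{P}^1\times\mathbb{P}^1$ is given by the base-point-free class $2H-E_1-E_2\sim\eta^*\mathcal{O}_{\mathbb{P}^1\times\mathbb{P}^1}(1,1)$. Arguing exactly as in Lemma~\ref{lemma:Eff-cone}, a $\mathbf{G}$-invariant prime divisor $S$ on $X$ should satisfy
$$
S\sim n_1(E_1+E_2)+n_2(2H-E_1-E_2)+n_3H
$$
for some $n_1,n_2,n_3\in\mathbb{Z}_{\geqslant 0}$: indeed $S\sim k_1H+k_2(E_1+E_2)$ for integers $k_1,k_2$, intersecting with a general fibre $f$ of $\eta$ (with $H\cdot f=2$, $(E_1+E_2)\cdot f=4$, $(2H-E_1-E_2)\cdot f=0$) shows the coefficient of $2H-E_1-E_2$ is non-negative, and the parity argument of Lemma~\ref{lemma:Eff-cone} (using that $\mathrm{Pic}(X)$ has no torsion) produces the asserted form. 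One must also confirm that the singular and reducible features of $X$ — the reducible discriminant $\Delta_1+\Delta_2$ and the reducible exceptional surfaces $E_1$, $E_2$ — introduce no extra extremal ray; this is automatic from the fibre argument, since any effective $\mathbf{G}$-invariant divisor $D$ with $D\cdot f=0$ is pulled back from $\mathbb{P}^1\times\mathbb{P}^1$ and the $\mathbf{G}$-invariant effective classes downstairs span the ray of $\mathcal{O}(1,1)$.

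Granting this, since $S\neq 0$ at least one of $H$, $E_1+E_2$, $2H-E_1-E_2$ occurs in $S$ with coefficient $\geqslant 1$; calling it $D$, the $\mathbb{R}$-divisor $u(S-D)$ is effective, so $\mathrm{vol}(-K_X-uS)\leqslant\mathrm{vol}(-K_X-uD)$ for all $u\geqslant 0$, and hence
$$
S_X(S)=\frac{1}{26}\int\limits_0^\infty\mathrm{vol}\big(-K_X-uS\big)\,du\leqslant\max_{D}\ \frac{1}{26}\int\limits_0^\infty\mathrm{vol}\big(-K_X-uD\big)\,du,
$$
where $D$ ranges over $\{H,\,E_1+E_2,\,2H-E_1-E_2\}$. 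As $\beta(S)=1-S_X(S)$, it suffices to check that each of these three numbers is $<1$.

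The remaining step is the computation of the three volumes. Here $(-K_X)^3=26$ and $-K_X\sim 3H-E_1-E_2$; blowing up the isolated nodes of $X$ (the same models used in Lemma~\ref{lemma:ODP-G-action-aleph}, plus the extra node when $[r:s]=[\pm 1:1]$) one reads off $H^3=2$, $H\cdot E_i^2=-2$, $E_i^3=-4$, $E_1\cdot E_2=0=H^2\cdot E_i$, so the three computations run parallel to those in Corollary~\ref{corollary:divisorial-stability}. Concretely: $-K_X-uH=(3-u)H-E_1-E_2$ is nef for $u\in[0,1]$ and not pseudo-effective for $u>1$, giving $S_X(H)=\tfrac{21}{52}$; the divisor $-K_X-u(E_1+E_2)$ is nef for $u\in[0,\tfrac12]$ and not pseudo-effective for $u>\tfrac12$, giving $S_X(E_1+E_2)=\tfrac{53}{208}$; and $-K_X-u(2H-E_1-E_2)$ is nef for $u\in[0,1]$, pseudo-effective for $u\in[0,\tfrac32]$, and needs a single Zariski-decomposition step on $[1,\tfrac32]$ — with the quadric-cone surfaces through $C_1$ and $C_2$ forming the negative part, exactly as in class $(\beth)$ — giving $S_X(2H-E_1-E_2)=\tfrac{49}{104}$. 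All three values are $<1$, so $\beta(S)>0$.

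I expect the genuine obstacle to be carrying out the last step honestly in the singular setting: one has to fix a smooth auxiliary model, check that the reducibility of $E_1$ and $E_2$ leaves the relevant triple intersection products unchanged, and verify the nef and pseudo-effective thresholds together with the one Zariski decomposition on it. These are, however, essentially the models and bookkeeping already used for class $(\beth)$ in Sections~\ref{section:beth-singular} and~\ref{section:beth-special}, so only cosmetic changes should be required.
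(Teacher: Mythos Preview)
Your proposal is correct and follows exactly the paper's approach: the paper's proof simply says to argue as in Lemma~\ref{lemma:Eff-cone} to obtain $S\sim a(E_1+E_2)+b(2H-E_1-E_2)+cH$ with $a,b,c\in\mathbb{Z}_{\geqslant 0}$, and then as in Corollary~\ref{corollary:divisorial-stability} to bound $S_X(S)$ by the three volume integrals you list. One small slip: intersecting with a general fibre $f$ gives $m_1\geqslant 0$ (the coefficient of $E_1+E_2$), not $m_2$; non-negativity of the coefficient of $2H-E_1-E_2$ comes from pushing forward via $\pi$, as in Lemma~\ref{lemma:Eff-cone}.
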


\begin{proof}
The~conic bundle $\eta\colon X\to\mathbb{P}^1\times\mathbb{P}^1$ is given by the~linear system $|2H+E_1+E_2|$.
Therefore, arguing as in the~proof of Lemma~\ref{lemma:Eff-cone}, we see that
$$
S\sim a(E_1+E_2)+b(2H-E_1-E_2)+cH
$$
for some non-negative integers $a$, $b$, $c$.
So, arguing exactly as in the~proof of Corollary~\ref{corollary:divisorial-stability}, we obtain the~required assertion.
\end{proof}

We conclude this section with the~following technical lemma:

\begin{proposition}
\label{proposition:delta-dP4-aleph}
Let $S$ be a~smooth surface in $|H-E_1|$ or $|H-E_2|$. Then
$$
\delta_P(X)\geqslant\frac{104}{99}
$$
for every point $P\in S$ such that $P\not\in E_1\cup E_2$.
\end{proposition}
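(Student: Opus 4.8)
The plan is to repeat, almost verbatim, the argument that proves Proposition~\ref{proposition:delta-dP4}. What makes this possible is that every numerical input to that proof remains valid in class $(\aleph)$: one still has $(-K_X)^3=26$, $-K_X\sim 3H-E_1-E_2$, $H^3=2$, $H^2\cdot E_i=0$, $H\cdot E_i^2=-2$, $E_i^3=-4$ and $E_1\cdot E_2=0$, since $\pi\colon X\to Q$ contracts $E_1$ and $E_2$ to conics lying in the smooth locus of a quadric threefold and these numbers are deformation invariants of the family \textnumero 3.10. Thus I would first fix $S\in|H-E_1|$ (the case $S\in|H-E_2|$ being symmetric) and write
$$
-K_X-uS\sim_{\mathbb{R}}(2-u)S+(H-E_2)+E_1\sim_{\mathbb{R}}(3-u)H-(1-u)E_1-E_2,
$$
which is nef for $u\in[0,1]$, pseudo-effective for $u\in[0,2]$, and for $u\in[1,2]$ has Zariski decomposition with positive part $P(u)=(3-u)H-E_2$ and negative part $(u-1)E_1$. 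Integrating $(P(u))^3$ then gives $S_X(S)=\frac34$, hence $\beta(S)=\frac14$.

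Next I would apply the Abban--Zhuang inequality to a flag $P\in C\subset S$ exactly as in the proof of Proposition~\ref{proposition:delta-dP4}, which reduces the statement to finding an irreducible smooth curve $C\subset S$ through $P$ with $S(W^S_{\bullet,\bullet};C)\le\frac{99}{104}$ and $S(W^{S,C}_{\bullet,\bullet,\bullet};P)\le\frac{99}{104}$; here the hypothesis $P\notin E_1\cup E_2$ is precisely what keeps $P$ off the supports of all negative parts, so that the simplified formulas for the two $S$-invariants apply. Recalling that $S$ is a smooth del Pezzo surface of degree $6$ with $E_2|_S=\mathbf{e}_1+\mathbf{e}_2$ a sum of two disjoint $(-1)$-curves, and that $E_1|_S\sim H|_S\sim Z+\mathbf{e}_1+\mathbf{e}_2$ where $Z$ is a fibre of the conic-bundle structure $S\to\mathbb{P}^1$ induced by $X\to\mathbb{P}^1\times\mathbb{P}^1$, I would take $C$ to be the component of $Z$ through $P$. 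The computation then splits into the same two cases as in Proposition~\ref{proposition:delta-dP4} --- $Z$ smooth irreducible with $Z^2=0$, or $Z=\ell_1+\ell_2$ with $\ell_1\cdot\ell_2=\ell_1\cdot\mathbf{e}_1=\ell_2\cdot\mathbf{e}_2=1$ --- and in each case the divisors $P(u)|_S$, the function $t(u)$ and the Zariski decompositions $P(u,v)+N(u,v)$ are literally those appearing there, so the integrals reproduce the same numbers, giving $S(W^S_{\bullet,\bullet};C)\le\frac{99}{104}$ and $S(W^{S,C}_{\bullet,\bullet,\bullet};P)\le\frac{99}{104}$ and hence $\delta_P(X)\ge\min\{\tfrac43,\tfrac{104}{99}\}=\tfrac{104}{99}$.

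The one genuinely new ingredient --- and the only place where class $(\aleph)$ differs from class $(\beth)$ --- is the geometry of $S$ itself, which I would need to verify directly from the explicit equations of $X$, since now $C_1$ and $C_2$ are singular conics and $E_1$, $E_2$ are reducible surfaces. Concretely, the points to check are: a smooth member $S\in|H-E_1|$ is still a del Pezzo surface of degree $6$, obtained from the smooth quadric surface $\pi(S)$ (the hyperplane section of $Q$ through $C_1$) by blowing up the two points of $C_2\cap\pi(S)$; $E_2|_S$ is a sum of two disjoint $(-1)$-curves; the projection $X\to\mathbb{P}^1\times\mathbb{P}^1$ restricts on $S$ to a conic-bundle structure with fibre $Z$ satisfying $E_1|_S\sim H|_S\sim Z+\mathbf{e}_1+\mathbf{e}_2$; and the fibre $Z$ through a point $P\notin E_1\cup E_2$ is either a smooth irreducible rational curve with $Z^2=0$ or a sum of two $(-1)$-curves as above. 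These are routine local verifications about $\pi$ near $C_1$ and near $C_2\cap\pi(S)$, not a conceptual obstacle; once they are in place, the rest of the proof is identical to that of Proposition~\ref{proposition:delta-dP4}.
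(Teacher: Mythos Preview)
Your proposal is correct and takes the same approach as the paper: the paper's proof consists of the single sentence ``The proof is identical to the proof of Proposition~\ref{proposition:delta-dP4}.'' Your write-up simply unpacks why this is so, verifying that the intersection numbers, the Zariski decompositions, and the structure of the flag $P\in C\subset S$ all carry over unchanged---which is more than the paper bothers to record.
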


\begin{proof}
The proof is identical to the~proof of Proposition~\ref{proposition:delta-dP4}.
\end{proof}

\subsection{Threefolds with two singular points.}
\label{section:aleph-two-points}
As before, we continue to use the notation and \mbox{assumptions} introduced earlier in this section.
Suppose that
$$
[r:s]\ne [\pm 1:1]
$$
and $[r:s]\ne [0:1]$. The goal of this subsection is to prove the following.

\begin{theorem}
\label{theorem:K-stability-aleph-two-singularities}
Let $X$ be a singular Fano threefold from class $(\aleph)$ with two singular points. Then $X$ is K-polystable.
\end{theorem}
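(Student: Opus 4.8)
The plan is to follow the strategy of Theorem~\ref{theorem:K-stability-smooth}: apply the $\mathbf{G}$-equivariant valuative criterion, pin down the possible centres of a hypothetical destabilising divisor, and then eliminate the surviving one via Abban--Zhuang theory. Concretely, suppose $X$ is not K-polystable. By \cite[Corollary~4.14]{Zhuang} there is a $\mathbf{G}$-invariant prime divisor $\mathbf{F}$ over $X$ with $\beta(\mathbf{F})=A_X(\mathbf{F})-S_X(\mathbf{F})\leqslant 0$; let $Z=C_X(\mathbf{F})$. By Lemma~\ref{lemma:divisorial-stability-aleph} the centre $Z$ is not a surface, and by Lemma~\ref{lemma:aleph-G-gixed-points} (using $[r:s]\neq[\pm 1:1]$) the threefold $X$ has no $\mathbf{G}$-fixed point, so $Z$ is not a point. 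Hence $Z$ is a $\mathbf{G}$-invariant irreducible curve. Since $\tau_2$ interchanges the disjoint surfaces $E_1$ and $E_2$, we also get $Z\not\subset E_1\cup E_2$.

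Next I would prove the analogue of Lemma~\ref{lemma:G-invariant-center}: if a general $S\in|H-E_1|$ has $S\cdot Z\neq 0$, then (for generic $S$) the surface $S$ is a smooth del Pezzo surface of degree $6$ avoiding the two nodes of $X$ and meeting $Z$ in a point $P\notin E_1\cup E_2$, so Proposition~\ref{proposition:delta-dP4-aleph} gives $\delta_P(X)\geqslant\frac{104}{99}$; but $\beta(\mathbf{F})\leqslant 0$ forces $\delta_P(X)\leqslant 1$ for every $P\in Z$, a contradiction. Therefore $(H-E_1)\cdot Z=(H-E_2)\cdot Z=0$, so $\eta(Z)$ is a point, and by Remark~\ref{remark:aleph-P1-P1-fixed-points} we have $\eta(Z)=([1:1],[1:1])$ or $\eta(Z)=([1:-1],[1:-1])$. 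Since $[r:s]\neq[\pm 1:1]$ and $[r:s]\neq[0:1]$, a direct substitution into the equations of $\Delta_1$ and $\Delta_2$ shows that $\Delta=\Delta_1+\Delta_2$ misses both of these points; hence the fibre of $\eta$ over each of them is a smooth conic $\cong\mathbb{P}^1$ lying in the smooth locus of $X$, and $Z$ is one of these two fibres. The two cases are symmetric, so take $\eta(Z)=([1:1],[1:1])$.

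To finish I would apply the Abban--Zhuang inequality \cite[Theorem~3.3]{AbbanZhuang}, \cite[Corollary~1.109]{Book} to the flag $Z\subset S\subset X$, where $S$ is the unique member of $|H-E_1|$ containing $Z$, namely $S=\eta^{-1}(\{u_1=v_1\})$. This $S$ is a del Pezzo surface of degree $6$ with a single singular point, the node $\big([1:1],[-1:1],[0:0:-1:1:0]\big)$ of $X$, which does not lie on $Z$; on $S$ the curve $Z$ is a smooth fibre with $Z^2=0$ of the conic bundle $S\to\mathbb{P}^1$. Since $S_X(S)$ depends only on the numerical class of $S$, it equals the value $\frac{3}{4}$ computed in Proposition~\ref{proposition:delta-dP4}, so $\beta(S)=\frac{1}{4}>0$. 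The negative part $N(u)$ of the Zariski decomposition of $-K_X-uS$ is supported on $E_1\not\supset Z$, and running the same computation as in Case~$(1)$ of the proof of Proposition~\ref{proposition:delta-dP4} (which is local along $Z$ and a general $P\in Z$, both sitting in the smooth locus of $S$ off $E_1\cup E_2$) yields $S(W^{S}_{\bullet,\bullet};Z)=\frac{3}{4}$ and $S(W^{S,Z}_{\bullet,\bullet,\bullet};P)=\frac{21}{26}$, both strictly below $\frac{99}{104}$. Hence $\delta_P(X)\geqslant\frac{104}{99}>1$ for general $P\in Z$, contradicting $\beta(\mathbf{F})\leqslant 0$ and proving the theorem.

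The main obstacle is this last step, since $S$ is singular whereas Propositions~\ref{proposition:delta-dP4} and~\ref{proposition:delta-dP4-aleph} are stated for smooth del Pezzo surfaces, and the $(-1)$-curve structure of $S$ that those computations use must be re-examined. I expect the cleanest rigorous route is to pass to the $\mathbf{G}$-equivariant blow-up $\varphi\colon\widetilde{X}\to X$ of the two nodes (as in Lemma~\ref{lemma:ODP-G-action-aleph}), on which the proper transform $\widetilde{S}$ of $S$ becomes a smooth weak del Pezzo surface of degree $6$ whose new $(-2)$-curve lies over the node and is disjoint from the lift of $Z$; one then carries out the Zariski-decomposition and Abban--Zhuang computations on $\widetilde{S}$ and $\widetilde{X}$ with respect to $\varphi^*(-K_X)$, checking that this $(-2)$-curve never contributes to the relevant positive parts, so that the numerical output is exactly as above.
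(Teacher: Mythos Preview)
Your strategy is correct and coincides with the paper's: reduce via the $\mathbf{G}$-equivariant valuative criterion (your Lemma~\ref{lemma:G-invariant-center} analogue is exactly Lemma~\ref{lemma:G-invariant-center-aleph}) to the case where $Z$ is the smooth fibre of $\eta$ over one of the two $\mathbf{G}$-fixed points, then apply Abban--Zhuang on the unique $S\in|H-E_1|$ through $Z$ to get $S(W^S_{\bullet,\bullet};Z)=\tfrac{3}{4}<1$.

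Two small points where the paper is more economical than your write-up. First, there is no need to resolve the nodes of $X$ or of $S$: the paper works directly on the singular sextic del~Pezzo $S$, setting $\mathbf{e}=E_2\vert_S$ and observing $\mathbf{e}^2=-2$, $Z\cdot\mathbf{e}=2$, $Z^2=0$, with $\mathrm{Supp}(\mathbf{e})$ contractible; these data alone drive the Zariski decomposition of $P(u)\vert_S-vZ$, and the formulas for $(P(u,v))^2$ are literally those of Case~(1) in Proposition~\ref{proposition:delta-dP4} (where $\mathbf{e}$ plays the role of $\mathbf{e}_1+\mathbf{e}_2$). Second, since $Z$ is itself the centre of $\mathbf{F}$, the two-term inequality $\tfrac{A_X(\mathbf{F})}{S_X(\mathbf{F})}\geqslant\min\{\tfrac{1}{S_X(S)},\tfrac{1}{S(W^S_{\bullet,\bullet};Z)}\}$ already gives the contradiction, so computing $S(W^{S,Z}_{\bullet,\bullet,\bullet};P)$ is unnecessary. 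Note also that the two possibilities for $\eta(Z)$ are not related by a symmetry of $X$ (the paper describes the geometry of $S$ separately in each case: $\pi(S)$ is a cone in one and a smooth quadric in the other), but the numerical Abban--Zhuang computation is identical in both.
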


The proof of this theorem follows the method used to prove Theorem~\ref{theorem:K-stability-smooth}. We begin~with a technical lemma.

\begin{lemma}
\label{lemma:G-invariant-center-aleph}
Suppose that $X$ is a Fano threefold from class $(\aleph)$ with isolated singularities.
Let $\mathbf{F}$ be a~$\mathbf{G}$-invariant prime divisor over $X$ with $\beta(\mathbf{F}) \leqslant 0$,
and let $Z$ be its center on~$X$. Suppose that $Z$ is not a $\mathbf{G}$-fixed singular point of the threefold $X$.
Then $Z$ is an irreducible curve such that $\eta(Z)=([1:1],[1:1])$ or $\eta(Z)=([1:-1],[1:-1])$.
\end{lemma}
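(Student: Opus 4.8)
The plan is to follow the proof of Lemma~\ref{lemma:G-invariant-center} almost word for word, with $G$ replaced by $\mathbf{G}$ and with Lemma~\ref{lemma:aleph-G-gixed-points}, Lemma~\ref{lemma:divisorial-stability-aleph} and Proposition~\ref{proposition:delta-dP4-aleph} playing the roles of Lemma~\ref{lemma:G-action}, Corollary~\ref{corollary:divisorial-stability} and Proposition~\ref{proposition:delta-dP4}.

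First I would pin down the dimension of $Z=C_X(\mathbf{F})$. Since the valuation $\mathbf{F}$ is $\mathbf{G}$-invariant, $Z$ is a $\mathbf{G}$-invariant irreducible subvariety of $X$. It cannot be a divisor, by Lemma~\ref{lemma:divisorial-stability-aleph}. It cannot be a point either: a $\mathbf{G}$-invariant point is $\mathbf{G}$-fixed, and Lemma~\ref{lemma:aleph-G-gixed-points} says that the only possible $\mathbf{G}$-fixed point on $X$ is the node $([1:1],[1:1],[1:1:\pm1:\pm1:0])$, which occurs only for $[r:s]=[\pm1:1]$ and is excluded by hypothesis; observe in particular that the two singular points present for every $[r:s]$, namely $([1:1],[-1:1],[0:0:-1:1:0])$ and $([-1:1],[1:1],[-1:1:0:0:0])$, are interchanged by $\tau_2\in\mathbf{G}$ and so are not $\mathbf{G}$-fixed. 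Hence $Z$ is an irreducible $\mathbf{G}$-invariant curve, and from $\beta(\mathbf{F})\leqslant 0$ we get $\delta_P(X)\leqslant 1$ for every point $P\in Z$.

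Second, I would show $(H-E_1)\cdot Z=0$. Note first that $Z\not\subset E_1\cup E_2$: the surfaces $E_1$ and $E_2$ are disjoint (because $C_1$ and $C_2$ are disjoint conics) and are interchanged by $\tau_2$, so an irreducible $\mathbf{G}$-invariant curve cannot lie in their union. Recall that $H-E_1$ is the class of the preimage under $\eta$ of a fibre of the first projection $\mathbb{P}^1\times\mathbb{P}^1\to\mathbb{P}^1$; these preimages form a base-point-free pencil inside $|H-E_1|$ whose general member is a smooth del Pezzo surface of degree $6$, the argument being identical to the one in the proof of Proposition~\ref{proposition:delta-dP4}. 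Suppose, for a contradiction, that $(H-E_1)\cdot Z\neq 0$. Then $\eta(Z)$ is not contained in a single fibre of the first projection, so we may pick a point $P\in Z$ with $P\notin E_1\cup E_2$ whose image $\eta(P)$ has first coordinate different from $[1:1]$ and $[-1:1]$. The member $S\in|H-E_1|$ through $P$ is then the $\eta$-preimage of a fibre avoiding these two values, hence it avoids all the singular points of $X$ (all of whose $\eta$-images have first coordinate $[1:1]$ or $[-1:1]$) and is therefore a smooth del Pezzo surface of degree $6$. Applying Proposition~\ref{proposition:delta-dP4-aleph} we get $\delta_P(X)\geqslant\frac{104}{99}>1$, contradicting $\delta_P(X)\leqslant 1$. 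So $(H-E_1)\cdot Z=0$, and, conjugating by $\tau_2$, also $(H-E_2)\cdot Z=0$.

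Finally, since $H-E_1$ and $H-E_2$ are the $\eta$-pull-backs of the two rulings of $\mathbb{P}^1\times\mathbb{P}^1$, the vanishing $(H-E_1)\cdot Z=(H-E_2)\cdot Z=0$ forces $Z$ to be contracted by $\eta$, so $\eta(Z)$ is a point; being $\mathbf{G}$-invariant it is $\mathbf{G}$-fixed, and Remark~\ref{remark:aleph-P1-P1-fixed-points} then gives $\eta(Z)=([1:1],[1:1])$ or $\eta(Z)=([1:-1],[1:-1])$, as required. I expect the only point needing genuine care to be the fixed-point bookkeeping in the first step: because in family $(\aleph)$ the threefold $X$ is always singular, one has to check, via Lemma~\ref{lemma:aleph-G-gixed-points}, that none of the always-present nodes is $\mathbf{G}$-fixed and that the extra node occurring when $[r:s]=[\pm1:1]$ is exactly the one removed by the hypothesis; the smoothness of the general member of the pencil in $|H-E_1|$ despite these nodes is the other routine check, carried out as in Proposition~\ref{proposition:delta-dP4}.
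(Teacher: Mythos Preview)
Your proposal is correct and follows exactly the approach the paper takes: the paper's own proof is a one-line reference stating that the argument is identical to that of Lemma~\ref{lemma:G-invariant-center}, with Lemmas~\ref{lemma:aleph-G-gixed-points}, \ref{lemma:divisorial-stability-aleph} and Proposition~\ref{proposition:delta-dP4-aleph} substituted for their $(\beth)$-counterparts. Your additional bookkeeping (that the two always-present nodes are swapped by $\tau_2$, and that avoiding first coordinate $[1:\pm 1]$ keeps the chosen member of $|H-E_1|$ away from $\mathrm{Sing}(X)$) is accurate and simply makes explicit what the paper leaves implicit.
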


\begin{proof}
The proof is identical to the proof of Lemma~\ref{lemma:G-invariant-center}, using Lemmas~\ref{lemma:aleph-G-gixed-points} and \ref{lemma:divisorial-stability-aleph}, and also Proposition~\ref{proposition:delta-dP4-aleph}.
\end{proof}

Now we prove Theorem \ref{theorem:K-stability-aleph-two-singularities}.
Suppose that the singular threefold $X$ is not K-polystable.
By~\cite[Corollary 4.14]{Zhuang},
there is a~$\mathbf{G}$-invariant prime divisor $\mathbf{F}$ over $X$ such that $\beta(\mathbf{F})\leqslant 0$. Let $Z$ be the~center of  $\mathbf{F}$ on $X$. We seek a contradiction.

By Lemma~\ref{lemma:aleph-G-gixed-points}, the threefold $X$ does not contain $\mathbf{G}$-fixed singular points.
Furthermore, the fibers of the~conic bundle $\eta$ over the~points $([1:1],[1:1])$ and $([1:-1],[1:-1])$ are smooth.
So, applying Lemma~\ref{lemma:G-invariant-center}, we see that  $Z$ is the~fiber of the~conic bundle $\eta$ over one of the points $([1:1],[1:1])$ or $([1:-1],[1:-1])$.

Let $S$ be the~unique surface in $|H-E_1|$ that contains $Z$. Observe the following:
\begin{itemize}
\item[$(\mathrm{1})$] if $\eta(Z)=([1:1],[1:1])$, then $S$ is cut out on $X$ by $u_1-v_1=0$, and
$$
\mathrm{Sing}(S)=\big([1:1],[-1:1],[0:0:-1:1:0]\big)\in\mathrm{Sing}(X),
$$
the surface $\pi(S)$ is a~quadric cone with vertex $[0:0:-1:1:0]=\mathrm{Sing}(C_1)$,
the~intersection $\pi(S)\cap C_2$ consists of two distinct smooth points of the conic $C_2$,
and $\pi$ induces a birational morphism $S\to\pi(S)$ that blows up these two points;

\item[$(\mathrm{2})$] if $\eta(Z)=([1:-1],[1:-1])$, then $S$ is cut out on $X$ by $u_1+v_1=0$, and
$$
\mathrm{Sing}(S)=\big([1:-1],[1,1],[-1:1:0:0:0]\big)\in\mathrm{Sing}(X),
$$
the~quadric surface $\pi(S)$ is smooth, $\mathrm{Sing}(C_2)\in\pi(S)$,
and $\pi$ induces a birational morphism $S\to\pi(S)$,
which is a~weighted blow up of the~point $\pi(S)\cap C_2$.
\end{itemize}
In both cases, the~surface $S$ is a~singular sextic del Pezzo surface with one ordinary double point,
and the~curve $Z$ is contained in the~smooth locus of the~surface $S$.

Let us apply Abban--Zhuang theory \cite{AbbanZhuang,Book} to the~flag $Z\subset S$.
Take $u\in\mathbb{R}_{\geqslant 0}$. Set
$$
P(u)=\left\{\aligned
&(3-u)H-(1-u)E_1-E_2\ \text{if $0\leqslant u\leqslant 1$}, \\
&(3-u)H-E_2\ \text{if $1\leqslant u\leqslant 2$},
\endaligned
\right.
$$
and
$$
N(u)=\left\{\aligned
&0\ \text{if $0\leqslant u\leqslant 1$}, \\
&(u-1)E_1\ \text{if $1\leqslant u\leqslant 2$}.
\endaligned
\right.
$$
Then it follows from the~proof of Proposition~\ref{proposition:delta-dP4}
that $P(u)$ and $N(u)$ are the~positive and the~negative parts of the~Zariski decomposition of $-K_X-uS$ for $u\in[0,2]$,
respectively.
Moreover, if $u>2$, then $-K_X-uS$ is not pseudo-effective. This gives $S_X(S)=\frac{3}{4}$.

Now, we take $v\in\mathbb{R}_{\geqslant 0}$ and consider the~divisor $P(u)\vert_{S}-vZ$. For $u\in[0,2]$, set
$$
t(u)=\mathrm{sup}\Big\{v\in\mathbb{R}_{\geqslant 0}\ \big\vert\ \text{the~divisor  $P(u)\big\vert_{S}-vZ$ is pseudo-effective}\Big\}.
$$
Let $P(u,v)$ be the~positive part of the~Zariski decomposition of $P(u)\vert_{S}-vZ$ for $v\leqslant t(u)$.
Note that $Z\not\subset\mathrm{Supp}(N(u))$ for every $u\in[0,2]$. Thus, it follows from \cite{AbbanZhuang,Book} that
$$
\frac{A_X(\mathbf{F})}{S_X(\mathbf{F})}\geqslant\min\Bigg\{\frac{1}{S_X(S)},\frac{1}{S\big(W^S_{\bullet,\bullet};Z\big)}\Bigg\},
$$
where
$$
S\big(W^S_{\bullet,\bullet};Z\big)=\frac{3}{(-K_X)^3}\int\limits_0^2\int\limits_0^{t(u)}\big(P(u,v)\big)^2dvdu.
$$
Since $S_X(S)=\frac{3}{4}$ and $\beta(\mathbf{F})=A_X(\mathbf{F})-S_X(\mathbf{F})\leqslant 0$,
we conclude that $S(W^S_{\bullet,\bullet};Z)\geqslant 1$.

Let us compute $S(W^S_{\bullet,\bullet};Z)$.
Set $\mathbf{e}=E_2\vert_{S}$. Then $\mathbf{e}^2=-2$ and $Z\cdot \mathbf{e}=2$.
We have
$$
P(u)\big\vert_{S}-vZ=\left\{\aligned
&(2-v)Z+\mathbf{e}\ \text{if $0\leqslant u\leqslant 1$}, \\
&(3-u-v)Z+(2-u)\mathbf{e}\ \text{if $1\leqslant u\leqslant 2$},
\endaligned
\right.
$$
which gives
$$
t(u)=\left\{\aligned
&2\ \text{if $0\leqslant u\leqslant 1$}, \\
&3-u\ \text{if $1\leqslant u\leqslant 2$},
\endaligned
\right.
$$
because $\mathrm{Supp}(\mathbf{e})$ is contractible. Furthermore, if $u\in[0,1]$, then
$$
P(u,v)=\left\{\aligned
&(2-v)Z+\mathbf{e}\ \text{if $0\leqslant v\leqslant 1$}, \\
&(2-v)\big(Z+\mathbf{e}\big)\ \text{if $1\leqslant v\leqslant 2$},
\endaligned
\right.
$$
which gives
$$
\big(P(u,v)\big)^2=\left\{\aligned
&6-4v\ \text{if $0\leqslant v\leqslant 1$}, \\
&2(v-2)^2\ \text{if $1\leqslant v\leqslant 2$}.
\endaligned
\right.
$$
Similarly, if $u\in[1,2]$, then
$$
P(u,v)=\left\{\aligned
&(3-u-v)Z+(2-u)\mathbf{e}\ \text{if $0\leqslant v\leqslant 1$}, \\
&(3-u-v)\big(Z+\mathbf{e}\big)\ \text{if $1\leqslant v\leqslant 3-u$},
\endaligned
\right.
$$
which gives
$$
\big(P(u,v)\big)^2=\left\{\aligned
&2(2-u)(4-u-2v)\ \text{if $0\leqslant v\leqslant 1$}, \\
&2(3-u-v)^2\ \text{if $1\leqslant v\leqslant 3-u$}.
\endaligned
\right.
$$
Integrating, we obtain $S(W^S_{\bullet,\bullet};Z)=\frac{3}{4}$. But we already proved earlier that $S(W^S_{\bullet,\bullet};Z)\geqslant 1$.
This is a contradiction. Hence, we see that $X$ is K-polystable.

\subsection{Threefolds with three singular points.}
\label{section:aleph-node}

We continue to use the notation and assumptions introduced at the~beginning of this section.
Suppose that $[r:s]=[\pm 1:1]$.
The goal of this subsection is to prove the following.

\begin{theorem}
\label{theorem:K-stability-aleph-three-singularities}
Let $X$ be a singular threefold from class $(\aleph)$ with three singular~points. Then $X$ is K-polystable.
\end{theorem}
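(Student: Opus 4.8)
The plan is to follow the strategy used for threefolds in class $(\beth)$ with one node (Theorem~\ref{theorem:K-stability-isolated}). By the $\mathbf{G}$-equivariant valuative criterion for K-polystability \cite[Corollary~4.14]{Zhuang}, it suffices to show that $\beta(\mathbf{F})>0$ for every $\mathbf{G}$-invariant prime divisor $\mathbf{F}$ over $X$. Suppose not: let $\mathbf{F}$ be $\mathbf{G}$-invariant with $\beta(\mathbf{F})\leqslant 0$, let $Z$ be its center on $X$, and write $O=\big([1:1],[1:1],[1:1:\pm1:\pm1:0]\big)$ for the unique $\mathbf{G}$-fixed node of $X$ (Lemma~\ref{lemma:aleph-G-gixed-points}). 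The argument splits into two lemmas, in analogy with Lemmas~\ref{lemma:K-stability-isolated-1} and~\ref{lemma:K-stability-isolated-2}, and then concludes by contradiction.

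First I would show that $Z=O$. Since $\mathbf{F}$ is $\mathbf{G}$-invariant, Lemma~\ref{lemma:divisorial-stability-aleph} forbids $Z$ from being a surface, and a $\mathbf{G}$-invariant point must be $\mathbf{G}$-fixed, so by Lemma~\ref{lemma:aleph-G-gixed-points} the only point $Z$ can be is $O$. If $Z\neq O$, then Lemma~\ref{lemma:G-invariant-center-aleph} forces $Z$ to be an irreducible curve with $\eta(Z)=([1:1],[1:1])$ or $\eta(Z)=([1:-1],[1:-1])$. Over $([1:1],[1:1])$ the fibre of $\eta$ is the reduced reducible conic $L_1+L_2$ with $L_1\cap L_2=O$, whose components are interchanged by $\tau_3\in\mathbf{G}$; so $Z$ (being irreducible and $\mathbf{G}$-invariant) cannot lie over this point. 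Over $([1:-1],[1:-1])$, this point does not lie on the discriminant curve $\Delta=\Delta_1+\Delta_2$ (immediate from the displayed equations of $\Delta_1,\Delta_2$ when $[r:s]=[\pm1:1]$), hence $Z$ is the smooth conic fibre of $\eta$ there, it lies in the smooth locus of $X$, and the unique surface $S\in|H-E_1|$ through $Z$ is the one cut out by $u_1+v_1=0$: it is a sextic del Pezzo surface with a single ordinary double point, namely the node of $X$ lying over $([-1:1],[1:1])$, and $Z$ is contained in its smooth locus. The flag $Z\subset S$ is then precisely the one handled in the proof of Theorem~\ref{theorem:K-stability-aleph-two-singularities}, and the same Abban--Zhuang computation \cite{AbbanZhuang,Book} gives $S_X(S)=\tfrac34$ and $S(W^S_{\bullet,\bullet};Z)=\tfrac34<1$, contradicting $\beta(\mathbf{F})\leqslant 0$.

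Next I would show that $\beta(\mathbf{F})>0$ when $C_X(\mathbf{F})=O$, which together with the previous step finishes the proof. Let $\varphi\colon\widetilde{X}\to X$ be the blow up of $O$, with exceptional surface $F\cong\mathbb{P}^1\times\mathbb{P}^1$ (Lemma~\ref{lemma:ODP-G-action-aleph}). As in the proof of Lemma~\ref{lemma:K-stability-isolated-2}, the proper transforms $\widetilde{L}_1,\widetilde{L}_2$ of the two lines through $O$ are smooth rational curves with normal bundle $\mathcal{O}_{\mathbb{P}^1}(-1)\oplus\mathcal{O}_{\mathbb{P}^1}(-1)$; let $\psi\colon\widehat{X}\to\widetilde{X}$ be their blow up. On $\widehat{X}$ one writes down the Zariski decomposition of $(\varphi\circ\psi)^*(-K_X)-u\widehat{F}$ for $u\in[0,3]$, piecewise on $[0,1]$, $[1,2]$ and $(2,3]$, with the $\psi$-exceptional surfaces and the proper transforms of the two quadric cones through $C_1$ and $C_2$ entering the negative part; integrating yields $S_X(F)$, and hence $\beta(F)=2-S_X(F)>0$. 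It follows that the center $\widetilde{Z}$ of $\mathbf{F}$ on $\widetilde{X}$ is a proper $\mathbf{G}$-invariant subvariety of $F$; since $F$ has no $\mathbf{G}$-fixed points it is a curve, and by Lemma~\ref{lemma:ODP-G-action-aleph}(5) it is one of the two $\mathbf{G}$-invariant curves of bidegree $(1,1)$ on $F$. Taking its proper transform $\widehat{Z}$ on $\widehat{X}$, which lies on the proper transform $\widehat{F}$ of $F$ — a sextic del Pezzo surface with a $\mathbf{G}$-minimal conic bundle $\xi\colon\widehat{F}\to\mathbb{P}^1$ — I would apply Abban--Zhuang theory to the flag $\widehat{Z}\subset\widehat{F}$, reduce by volume monotonicity to the case where $\widehat{Z}$ is a fibre of $\xi$, and compute $S(W^{\widehat{F}}_{\bullet,\bullet};\widehat{Z})<1$, again contradicting $\beta(\mathbf{F})\leqslant 0$.

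The main obstacle is the second step. The first step is essentially a transcription of the two-node argument, because the surface $S$ has the same singularity type there; but the second step requires redoing, for the class $(\aleph)$ geometry and the larger group $\mathbf{G}\cong(\mathbb{C}^\ast\rtimes\mumu_2)\times\mumu_2$, the long chain of computations in the proof of Lemma~\ref{lemma:K-stability-isolated-2}: checking the normal bundles of $\widetilde{L}_1,\widetilde{L}_2$, tabulating the triple intersection numbers on $\widehat{X}$, pinning down the three-interval Zariski decomposition of $(\varphi\circ\psi)^*(-K_X)-u\widehat{F}$, verifying $\beta(F)>0$, and then running the final flag computation on $\widehat{F}$.
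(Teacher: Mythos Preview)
Your proposal is correct and follows essentially the same approach as the paper. The paper's own proof is even terser about the second step: after reducing to the case $C_X(\mathbf{F})=O$ via the analogue of Lemma~\ref{lemma:K-stability-isolated-1} (which is your first step, with the same surface $S=X\cap\{u_1+v_1=0\}$ and the same reference back to the Abban--Zhuang computation from Section~\ref{section:aleph-two-points}), the paper simply blows up $O$, invokes Lemma~\ref{lemma:ODP-G-action-aleph} to rule out a $\mathbf{G}$-fixed point on $F$ and to constrain the curve case, and then writes ``arguing exactly as in the proof of Lemma~\ref{lemma:K-stability-isolated-2}, we obtain a contradiction''---in other words, the numerical computation on $\widehat{X}$ is asserted to transfer verbatim rather than being redone, since the local geometry at the node, the intersection numbers, and the resulting Zariski decompositions are identical to the $(\beth)$ case.
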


The proof follows the method used to prove Theorem~\ref{theorem:K-stability-isolated}. After changing coordinates, we may assume that $[r:s]=[1:1]$. Then
$$
X=\big\{u_1x=v_1y,u_2z=v_2t,w^2+(x+y)^2+(z+t)^2=4(xt+yz)\big\}\subset\mathbb{P}^1\times\mathbb{P}^1\times\mathbb{P}^4,
$$
and the~singular locus of the~threefold $X$ consists of the~following three points:
\begin{align*}
\big([1:1],[-1:1]&,[0:0:-1:1:0]\big),\\
\big([-1:1],[1:1]&,[-1:1:0:0:0]\big),\\
\big([1:1],[1:1]&,[1:1:1:1:0]\big),
\end{align*}
which are isolated threefold nodes ($\mathrm{cA}_1$) in $X$. We have
$$
Q=\big\{w^2+(x+y)^2+(z+t)^2=4(xt+yz)\big\}\subset\mathbb{P}^1\times\mathbb{P}^4,
$$
the surface $Q$ is a~cone with vertex $[1:1:1:1:0]$,
and $\pi$ is a~blow up of the~conics
\begin{align*}
{C}_1&=\{x=0,y=0,w^2+(z+t)^2=0\},\\
{C}_2&=\{z=0,t=0,w^2+(x+y)^2=0\}.
\end{align*}
Note that $C_1$ and $C_2$ are reduced singular conics that do not contain $\mathrm{Sing}(Q)$.

The fiber of the~conic bundle $\eta$ over the~point $([1:1],[1:1])$ is a~reducible conic $L_1+L_2$,
where $L_1$ and $L_2$ are smooth irreducible curves such that
\begin{align*}
\pi(L_1)&=\{x=y,z=t,w+2i(x-z)=0\},\\
\pi(L_2)&=\{x=y,z=t,w-2i(x-z)=0\}.
\end{align*}
Note that $\pi(L_1)$ and $\pi(L_2)$ are the~only lines in $Q$ that intersect both conics $C_1$ and $C_2$,
and each of them intersects exactly one irreducible component of the~conics $C_1$ and~$C_2$.
Observe that the intersection $L_1\cap L_2$ is the~singular point $([1:1],[1:1],[1:1:1:1:0])$. We denote this singular point by $O$.

\begin{lemma}
\label{lemma:K-stability-3-points-1}
Let $\mathbf{F}$ be a~$\mathbf{G}$-invariant prime divisor $\mathbf{F}$ over the~threefold $X$ with $\beta(\mathbf{F})\leqslant 0$,
and let $Z$ be its center on the~threefold $X$. Then~$Z=O$.
\end{lemma}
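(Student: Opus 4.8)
The plan is to argue by contradiction, following the pattern of Lemma~\ref{lemma:K-stability-isolated-1} and then borrowing the Abban--Zhuang computation from the proof of Theorem~\ref{theorem:K-stability-aleph-two-singularities}. Suppose $Z\neq O$. Since $\mathbf{F}$ is $\mathbf{G}$-invariant, its center $Z$ is a $\mathbf{G}$-invariant irreducible subvariety of $X$; by Lemma~\ref{lemma:aleph-G-gixed-points} the point $O$ is the only $\mathbf{G}$-fixed point of $X$, so $Z$ is not a $\mathbf{G}$-fixed singular point, and Lemma~\ref{lemma:G-invariant-center-aleph} applies: $Z$ is an irreducible curve with $\eta(Z)=([1:1],[1:1])$ or $\eta(Z)=([1:-1],[1:-1])$. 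The first possibility is excluded because the fiber of $\eta$ over $([1:1],[1:1])$ is the reducible conic $L_1+L_2$, so $Z$ would have to be $L_1$ or $L_2$, whereas $\tau_3$ interchanges $L_1$ and $L_2$ (it negates $w$, and $\pi(L_1),\pi(L_2)$ are distinguished by the sign of the coefficient of $w$), so neither is $\mathbf{G}$-invariant. Hence $\eta(Z)=([1:-1],[1:-1])$, i.e.\ $Z$ is the fiber of $\eta$ over $([-1:1],[-1:1])$; since this point lies on neither component of the discriminant curve $\Delta=\Delta_1+\Delta_2$, the fiber $Z$ is a smooth conic contained in the smooth locus of $X$.

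Next, let $S$ be the surface in $|H-E_1|$ cut out on $X$ by $u_1+v_1=0$, so that $Z\subset S$. The claim to check is that $S$ is precisely the surface treated in case $(2)$ of the proof of Theorem~\ref{theorem:K-stability-aleph-two-singularities}: its image $\pi(S)$ is a smooth quadric surface meeting $C_2$ only at $\mathrm{Sing}(C_2)$, and $S$ is a sextic del Pezzo surface whose unique singular point is the node $([-1:1],[1:1],[-1:1:0:0:0])$ of $X$, which exists for every $[r:s]$; moreover $S$ picks up no extra singularity from the two remaining nodes of $X$ (namely $O$ and $([1:1],[-1:1],[0:0:-1:1:0])$), since neither lies on $S$. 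As $\mathrm{Sing}(S)$ lies over $([-1:1],[1:1])\neq([-1:1],[-1:1])$, the curve $Z$ is contained in the smooth locus of $S$; and $Z\not\subset E_1\cup E_2$ because $Z$ is $\mathbf{G}$-invariant while $E_1$ and $E_2$ are disjoint and swapped by $\mathbf{G}$. Applying Abban--Zhuang theory to the flag $Z\subset S$ verbatim as in that proof gives $S_X(S)=\tfrac34$ and $S\big(W^S_{\bullet,\bullet};Z\big)=\tfrac34$, so that
$$
1\ \geqslant\ \frac{A_X(\mathbf{F})}{S_X(\mathbf{F})}\ \geqslant\ \min\Big\{\tfrac{1}{S_X(S)},\ \tfrac{1}{S(W^S_{\bullet,\bullet};Z)}\Big\}\ =\ \tfrac43,
$$
a contradiction. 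Therefore $Z=O$.

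The step I expect to be the main obstacle is the verification in the second paragraph that, even though $X$ now has three nodes rather than two, the surface $S\in|H-E_1|$ through $Z$ still has exactly one ordinary double point and $Z$ stays in its smooth locus --- in other words, that the del Pezzo geometry of $S$ and the Zariski-decomposition data used in Section~\ref{section:aleph-two-points} transfer unchanged. Once this is confirmed, the evaluation of $S_X(S)$ and of $S(W^S_{\bullet,\bullet};Z)$ is a formal repetition of the computation already carried out there.
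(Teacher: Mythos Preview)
Your proposal is correct and follows essentially the same approach as the paper: reduce via Lemma~\ref{lemma:G-invariant-center-aleph} to the two fibers, eliminate the fiber over $([1:1],[1:1])$ because $L_1,L_2$ are swapped by an element of $\mathbf{G}$, and then rule out the smooth fiber over $([1:-1],[1:-1])$ by running the Abban--Zhuang flag computation from Section~\ref{section:aleph-two-points} on the singular sextic del Pezzo $S=X\cap\{u_1+v_1=0\}$. Your extra checks (that the point $([1:-1],[1:-1])$ avoids $\Delta$, and that the two additional nodes of $X$ miss $S$) are exactly the verifications needed to justify that the surface and the computation carry over unchanged.
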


\begin{proof}
By Lemma~\ref{lemma:aleph-G-gixed-points}, $O$ is the~unique $\mathbf{G}$-fixed singular point in $X$.
Thus, by Lemma~\ref{lemma:G-invariant-center-aleph}, we see that one of the following three cases holds:
\begin{itemize}
\item[$(\mathrm{1})$] either $Z=O$,
\item[$(\mathrm{2})$] or $Z$ is an irreducible component of the~fiber of $\eta$ over $([1:1],[1:1])$,
\item[$(\mathrm{3})$] or $Z$ is an irreducible component of the~fiber of $\eta$ over $([1:-1],[1:-1])$.
\end{itemize}
In the second case, either $Z=L_1$ or $Z=L_2$, which is impossible, because neither of these curves is $\mathbf{G}$-invariant.
Moreover, the~fiber of $\eta$ over  $([1:-1],[1:-1])$ is irreducible and smooth.
So, either $Z=O$ or $Z$ is the~fiber of the~conic bundle $\eta$ over $([1:-1],[1:-1])$.

Suppose that $Z$ is the~fiber of $\eta$ over $([1:-1],[1:-1])$.
Set $S=X\cap\{u_1+v_1=0\}$.
Then $S$ is a~singular sextic del Pezzo surface that contains~$Z$. Moreover, we have
$$
\mathrm{Sing}(S)=\big([1:-1],[1,1],[-1:1:0:0:0]\big)\in\mathrm{Sing}(X),
$$
this point is an ordinary double point of the~surface $S$,
the~quadric surface $\pi(S)$ is smooth,
and $\pi$ induces a weighted blow up $S\to\pi(S)$ of the~single intersection point $\pi(S)\cap C_2$.
Now, arguing as in the~end of Section~\ref{section:aleph-two-points}, we obtain a~contradiction.
\end{proof}

Now we are ready to prove that $X$ is K-polystable.
Suppose for a contradiction that the~singular Fano threefold $X$ is not K-polystable.
Then there is a~$\mathbf{G}$-invariant prime divisor $\mathbf{F}$ over $X$ such that $\beta(\mathbf{F})\leqslant 0$.
By Lemma~\ref{lemma:K-stability-3-points-1}, its center on $X$ is the~point $O$.

Let $\varphi\colon\widetilde{X}\to X$ be the~blow up of the~point $O$, and let $F$ be the~$\varphi$-exceptional divisor,
and let $\widetilde{Z}$ be the~center on $\widetilde{X}$ of the~prime divisor  $\mathbf{F}$.
Then $\varphi$ is $\mathbf{G}$-equivariant, and
\begin{itemize}
\item[$(\mathrm{1})$]  either $\widetilde{Z}=F$ and $\beta(F)=\beta(\mathbf{F})$, or
\item[$(\mathrm{2})$]  $\widetilde{Z}$ is a~$\mathbf{G}$-invariant irreducible curve in $F$, or
\item[$(\mathrm{3})$] $\widetilde{Z}$ is a~$\mathbf{G}$-fixed point in $F$.
\end{itemize}
Case $(\mathrm{3})$ is impossible by Lemma~\ref{lemma:ODP-G-action-aleph}.
Moreover, in case $(\mathrm{2})$ we have  two choices for $Z$,
as described in the proof of Lemma~\ref{lemma:ODP-G-action-aleph}.
Now, arguing exactly as in the~proof of Lemma~\ref{lemma:K-stability-isolated-2},
we obtain a contradiction. This proves that $X$ is K-polystable.

\subsection{Threefolds with non-isolated singularities.}
\label{section:aleph-0}
It remains to study the~singular threefold $X$ from class~$(\aleph)$ with $[r:s]=[0:1]$. Recall that $X$ is the~complete intersection
$$
\big\{u_1x=v_1y,u_2z=v_2t,w^2=4(xt+yz)\big\}\subset\mathbb{P}^1\times\mathbb{P}^1\times\mathbb{P}^4,
$$
where $([u_1:v_1],[u_2:v_2],[x:y:z:t:w])$ are coordinates on $\mathbb{P}^1\times\mathbb{P}^1\times\mathbb{P}^4$.
The goal of this subsection is to prove the following.

\begin{theorem}
\label{theorem:K-stability-non-isolated-aleph}
The threefold $X$ from class $(\aleph)$ described above, which has non-isolated singularities, is K-polystable.
\end{theorem}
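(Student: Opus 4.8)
The plan is to mimic the argument of Section~\ref{section:beth-special}, where the non-isolated case in class $(\beth)$ was treated. Throughout, $\mathrm{Aut}(X)$ contains the large group $\mathbf{G}\cong(\mathbb{C}^\ast\rtimes\mumu_2)\times\mumu_2$ constructed at the beginning of Section~\ref{section:aleph}, so by the $\mathbf{G}$-equivariant valuative criterion for K-polystability \cite[Corollary 4.14]{Zhuang} it suffices to show that no $\mathbf{G}$-invariant prime divisor $\mathbf{F}$ over $X$ has $\beta(\mathbf{F})\leqslant 0$. The main structural difference from the $(\beth)$ case is that here the quadric $Q=\{w^2=4(xt+yz)\}$ is \emph{smooth}: all singularities of $X$ are created by the blow up $\pi\colon X\to Q$ of the two \emph{non-reduced} conics $C_1$ and $C_2$, and $\mathrm{Sing}(X)=C^{(1)}\cup C^{(2)}$ is a disjoint pair of smooth rational curves interchanged by $\tau_2$, so that the reduced singular locus $C:=C^{(1)}\cup C^{(2)}$ is $\mathbf{G}$-irreducible. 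First I would record the analogue of Lemma~\ref{lemma:aleph-G-gixed-points}: since $[r:s]=[0:1]\ne[\pm1:1]$, the threefold $X$ has no $\mathbf{G}$-fixed points. It is also worth noting --- as a cross-check, although we argue directly for consistency --- that $W=X/\tau_3$ is the \emph{same} smooth Fano threefold in family~\textnumero~3.25 that appears in Section~\ref{section:beth-special}, that $\pi$ descends to the blow up of $\mathbb{P}^3$ along the two disjoint lines lying on the smooth quadric surface $\{xt+yz=0\}$, and hence that $X\to W$ is a double cover branched over the proper transform of that surface, a copy of $\mathbb{P}^1\times\mathbb{P}^1$; by \cite{Dervan,Fujita2019b,LiuZhu,Zhuang}, K-polystability of $X$ is equivalent to K-polystability of the log Fano pair $(W,\tfrac{1}{2} D)$.

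Next I would build a $\mathbf{G}$-equivariant birational model. Let $\varphi\colon\widetilde X\to X$ be the blow up of the reduced curve $\mathrm{Sing}(X)$, with $\mathbf{G}$-irreducible exceptional surface $F=F_1\cup F_2$, and then resolve further so that $\widetilde X$ sits over a suitable projective bundle, using the $\mathbf{G}$-equivariant map $Q\dasharrow\mathbb{P}^2$, $[\mathbf x:\mathbf y:\mathbf z:\mathbf t:\mathbf w]\mapsto[\mathbf y:\mathbf t:\mathbf w]$ in the coordinates introduced in Lemma~\ref{lemma:aleph-G-gixed-points}; this should yield a diagram analogous to \eqref{equation:beth-big} in which a single $\mathbf{G}$-invariant prime divisor $\widetilde{\mathscr S}$, the proper transform of an explicit $\mathbf{G}$-invariant surface $\mathscr S\subset Q$, is contracted. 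I would then prove the analogues of Lemma~\ref{lemma:beth-special-curves} (the only $\mathbf{G}$-invariant irreducible curves in $X$ are $C$ and one further curve $C'$ with $\pi(C')$ a smooth conic) and of Lemma~\ref{lemma:beth-G-action} (each of $F$ and $\widetilde{\mathscr S}$ is $\cong\mathbb{P}^1\times\mathbb{P}^1$ and contains exactly two $\mathbf{G}$-invariant irreducible curves, all smooth and of bidegree $(1,1)$), and compute the triple-intersection table on $\widetilde X$ for the divisors $\widetilde H$, $\widetilde E_1+\widetilde E_2$, $F$, $\widetilde{\mathscr S}$, and the transforms of the distinguished hyperplane sections of $Q$.

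The divisorial-stability step is the analogue of Lemma~\ref{lemma:beth-special-Eff-cone}: one shows that every $\mathbf{G}$-invariant prime divisor on $\widetilde X$ lies in the monoid generated by finitely many extremal classes among those just listed, and for each generator $D$ one computes the Zariski decomposition of $-K_{\widetilde X}-uD$ and checks $\tfrac{1}{26}\int_0^\infty\mathrm{vol}(-K_{\widetilde X}-uD)\,du<1$; this gives $\beta(D)>0$ for every $\mathbf{G}$-invariant prime divisor $D$ on $\widetilde X$. Granting this, if $\beta(\mathbf{F})\leqslant 0$ then the center $Z:=C_{\widetilde X}(\mathbf{F})$ must be a $\mathbf{G}$-invariant curve, and by the classification of $\mathbf{G}$-invariant curves $Z\subset F\cup\widetilde{\mathscr S}$. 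The endgame is to rule out $Z\subset F$ and $Z\subset\widetilde{\mathscr S}$ separately, exactly as in Lemmas~\ref{lemma:beth-Z-not-F}--\ref{lemma:beth-Z-not-S}: for $D\in\{F,\widetilde{\mathscr S}\}$ and $Z$ a $\mathbf{G}$-invariant curve on $D$, apply Abban--Zhuang theory \cite[Theorem 3.3]{AbbanZhuang}, \cite[Corollary 1.109]{Book} to the flag $Z\subset D$, using the Zariski decompositions of $-K_{\widetilde X}-uD$ restricted to $D$, and show $S(W^{D}_{\bullet,\bullet};Z)<1$. This contradicts $\beta(\mathbf{F})\leqslant 0$ and finishes the proof.

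The step I expect to be the main obstacle is setting up the birational model correctly. Because $\pi$ blows up two \emph{non-reduced} conics (double lines) on a smooth quadric, the exceptional divisors $E_1,E_2$, the singular curves $C^{(i)}$, and the correct resolution $\widetilde X$ --- in particular, the identification of the contracted auxiliary surface $\mathscr S$ and of the base of the projective bundle over which $\widetilde X$ lives --- are considerably less transparent than in the $(\beth)$ case, where $Q$ was singular along a line and the model in \eqref{equation:beth-big} was essentially read off from that line. Once the model, the intersection table, and the list of $\mathbf{G}$-invariant curves are in place, the Zariski-decomposition and Abban--Zhuang computations are routine, if lengthy; alternatively, one could bypass most of this work by proving K-polystability of the log pair $(W,\tfrac{1}{2} D)$ directly, whose large symmetry group makes it quite accessible.
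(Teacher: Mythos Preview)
Your plan is plausible in outline, but the paper takes a much shorter and conceptually different route, and the key observation you are missing is that for this particular $X$ the automorphism group is \emph{far larger} than the group $\mathbf{G}\cong(\mathbb{C}^\ast\rtimes\mumu_2)\times\mumu_2$ you work with. Concretely, the paper observes that $X$ can also be obtained from the smooth Fano threefold $V$ in family \textnumero~3.20 via an explicit diagram, and that
\[
\mathrm{Aut}(X)\cong\mathrm{Aut}(V)\cong\big(\mathbb{C}^\ast\rtimes\mumu_2\big)\times\mathrm{PGL}_2(\mathbb{C}).
\]
With this much larger group acting, the surface $S$ (the strict transform of $Q\cap\{w=0\}$) is the \emph{only} $\mathrm{Aut}(X)$-invariant prime divisor over $X$. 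Hence, by \cite[Corollary~4.14]{Zhuang}, it suffices to compute a single $\beta$-invariant: with $H=\pi^*(\mathcal{O}_Q(1))$, $-K_X\sim 3H-2E_1-2E_2$ and $S\sim H-E_1-E_2$, a short Zariski-decomposition calculation gives $\beta(S)=\tfrac{3}{52}>0$, and the proof is finished.

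By contrast, your approach commits to the smaller group $\mathbf{G}$, which forces you to replicate the entire machinery of Section~\ref{section:beth-special}: build a $\mathbf{G}$-equivariant resolution $\widetilde{X}$, classify all $\mathbf{G}$-invariant curves, prove divisorial stability for several extremal classes, and run Abban--Zhuang on two flags. You correctly identify the main obstacle --- the birational model is genuinely harder to set up here because $Q$ is smooth and $\pi$ is a weighted blow up of two lines rather than an ordinary blow up along a singular line --- but the paper sidesteps all of this. The moral is that when the symmetry group jumps (here from $\mathbb{C}^\ast$ to $\mathrm{PGL}_2(\mathbb{C})$), the equivariant valuative criterion can collapse the problem to a single computation; it is worth checking for such a jump before embarking on a lengthy flag analysis. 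Your remark at the end about proving K-polystability of the log pair $(W,\tfrac{1}{2}D)$ directly is closer in spirit to what the paper does, though the paper still argues on $X$ itself.
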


Let
$$
Q=\big\{w^2=4(xt+yz)\big\}\subset\mathbb{P}^4,
$$
and let $\pi\colon{X}\to{Q}$ be the~birational morphism that is induced by the~projection ${X}\to\mathbb{P}^4$.
Then $Q$ is a smooth quadric threefold, and the~morphism $\pi$ is a~weighted blow up with weights $(1,2)$ of the lines ${L}_1=\{x=0,y=0,w=0\}$ and ${L}_2=\{z=0,t=0,w=0\}$.
Note that the~singular locus of ${X}$ consists of two disjoint smooth curves of $\mathrm{cA}_1$ singularities
\begin{align*}
\{v_1u_2+u_1v_2=0,u_1z+v_1t=0,u_2z=v_2t,x=0,y=0,w=0\},\\
\{v_1u_2+u_1v_2=0,u_2x+v_2y=0,u_1x=v_1y,z=0,t=0,w=0\}.
\end{align*}

Let ${S}$ be the~strict transform on ${X}$ of the~surface in ${Q}$ that is cut out on $Q$ by $w=0$,
let~${E}_1$ and  ${E}_2$ be the two $\pi$-exceptional surfaces such that $\pi({E}_1)={L}_1$ and $\pi({E}_2)={L}_2$,
and let $\iota$ be the~involution in $\mathrm{Aut}(X)$ given by
$$
\iota \colon \big([u_1:v_1],[u_2:v_2],[x:y:z:t:w]\big)\mapsto \big([u_1:v_1],[u_2:v_2],[x:y:z:t:-w]\big).
$$
We set $W=X/\iota$ and let $\theta\colon X\to W$ be the~quotient map.
Then $W$ is a~smooth threefold and $\theta$ is ramified in $S\cup E_1\cup E_2$.
Moreover,  we have the~following commutative diagram:
$$
\xymatrix{
{X}\ar@{->}[d]_{\pi}\ar@{->}[rr]^{\theta}&&{W}\ar@{->}[d]^{\varpi}\\%
{Q}\ar@{->}[rr]_{\vartheta}&&\mathbb{P}^3}
$$
where $\vartheta$ is the~double cover ramified in $\pi(S)$ given by $[x:y:z:t:w]\mapsto[x:y:z:t]$
and $\varpi$ is the~blow up of the~disjoint lines: $\{x=y=0\}$ and $\{z=t=0\}$.
Note that $W$ is the~unique smooth Fano threefold in the~family \textnumero 3.25.

The threefold ${X}$ can be also obtained via the~following commutative diagram:
$$
\xymatrix{
{V}\ar@{->}[d]_{\alpha}&&&{Y}\ar@{->}[lll]_{\beta}\\%
{Q}&&&{X}\ar@{->}[lll]_{\pi}\ar@{->}[u]_{\gamma}}
$$
where $\alpha$  blows up  the~lines ${L}_1$ and ${L}_2$,
$\beta$ blows up the~$(-1)$-curves in the~$\alpha$-exceptional surfaces (which are both isomorphic to $\mathbb{F}_1$),
and $\gamma$ contracts the~$\alpha$-exceptional surfaces.
Note that ${V}$ is the~smooth Fano threefold in the~family \textnumero 3.20.
One can check that
$$
\mathrm{Aut}\big({X}\big)\cong\mathrm{Aut}\big({V}\big)\cong\Big(\mathbb{C}^\ast\rtimes\mumu_2\Big)\times\mathrm{PGL}_2\big(\mathbb{C}\big),
$$
and ${S}$ is the~only $\mathrm{Aut}({X})$-invariant prime divisor over ${X}$.

Set ${H}=\pi^*(\mathcal{O}_{Q}(1))$, and observe that $-K_{{X}}\sim 3{H}-2{E}_1-2{E}_2$ and $S\sim {H}-{E}_1-{E}_2$.
Then, arguing as in the~proof of Lemma~\ref{lemma:beth-special-Eff-cone}, we get
\begin{multline*}
\beta({S})=1-\frac{1}{26}\int\limits_{0}^{2}\big(-K_{X}-u{S}\big)^3du-\frac{1}{26}\int\limits_{2}^{3}\big(-K_{X}-u{S}-(u-2)({E}_1+{E}_2)\big)^3du=\\
=1-\frac{1}{26}\int\limits_{0}^{2}\big((3-u){H}+(u-2)({E}_1+{E}_2)\big)^3du-\frac{1}{26}\int\limits_{2}^{3}\big((3-u){H}\big)^3du=\\
=1-\frac{1}{26}\int\limits_{0}^{2}3u^2-18u+26du-\frac{1}{26}\int\limits_{2}^{3}2(3-u)^3du=1-\frac{49}{52}=\frac{3}{52}>0.
\end{multline*}
Thus it follows from \cite[Corollary 4.14]{Zhuang} that ${X}$ is K-polystable.

\section{Proof of Main Theorem}
\label{section:K-moduli}

In Section~\ref{section:GIT-2} we presented the~parameter space $T=\mathbb{P}^4$
whose open part parametrizes all K-polystable smooth Fano threefolds in the~family~\textnumero~3.10,
and described a $(\mathbb{C}^\ast)^2$-action on $T$ such that the GIT quotient
$T/\!/(\mathbb{C}^\ast)^2\cong\mathbb{P}^1\times\mathbb{P}^1$ parametrizes the
Fano threefolds from the Main Theorem, which include all K-polystable smooth Fano threefolds in this  family.
Now, arguing as in \cite[Section~6]{Gokova}, we obtain the assertion of the Main Theorem.

Namely, our construction implies that there is a $\mathbb{Q}$-Gorenstein family of Fano varieties over the~GIT moduli stack $[\mathring{T}/(\mathbb{C}^\ast)^2]$
that contains all K-polystable smooth members of the~family \textnumero 3.10,
where $\mathring{T}$ is the~GIT semistable open subset of the~parameter space $T$.

In Sections~\ref{section:beth} and \ref{section:aleph}, we proved that all GIT-polystable objects in our family are in fact K-polystable Fano threefolds.
So, we obtain a morphism of stacks
$$
\big[\mathring{T}/(\mathbb{C}^\ast)^2\big]\longrightarrow\mathcal{M}^{\mathrm{Kss}}_{3,26},
$$
where $\mathcal{M}^{\mathrm{Kss}}_{3,26}$ is the~K-moduli stack parametrizing K-semistable Fano 3-folds of degree~$26$.
By \cite[Theorem 6.6]{Alper}, this morphism descends to good moduli spaces
$$
\mathring{T}/(\mathbb{C}^\ast)^2\longrightarrow M^{\mathrm{Kps}}_{3,26},
$$
where points of $M^{\mathrm{Kps}}_{3,26}$ parameterize K-polystable Fano threefolds of degree $26$.

\begin{lemma}
\label{lemma:unobstuctedness}
Let $X$ be a K-polystable limit of smooth Fano threefolds in the family \textnumero 3.10, as described in the Main Theorem.
Then the deformations of $X$ are unobstucted.
\end{lemma}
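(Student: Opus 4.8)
The plan is to show that the obstruction space for deformations of $X$ vanishes, exploiting that $X$ is a complete intersection in the smooth toric sixfold $P:=\mathbb{P}^1\times\mathbb{P}^1\times\mathbb{P}^4$. By the Main Theorem every such $X$ is cut out in $P$ by three equations of multidegrees $(1,0,1)$, $(0,1,1)$, $(0,0,2)$, and in each case $X$ is an irreducible threefold, hence a complete intersection of codimension $3$ and in particular a local complete intersection; write $\mathcal{N}:=\mathcal{O}_X(1,0,1)\oplus\mathcal{O}_X(0,1,1)\oplus\mathcal{O}_X(0,0,2)$ for its normal sheaf in $P$. Then the truncated cotangent complex is $\mathbb{L}_X\simeq[\,\mathcal{N}^{\vee}\to\Omega_P|_X\,]$ in degrees $[-1,0]$, both terms locally free, so that $R\mathcal{H}om_X(\mathbb{L}_X,\mathcal{O}_X)$ is represented by the two-term complex of vector bundles $C^{\bullet}:=[\,T_P|_X\to\mathcal{N}\,]$ placed in degrees $[0,1]$. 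The deformations of $X$ are governed by $\mathbf{T}^1_X=\mathbb{H}^1(X,C^{\bullet})$ with obstructions lying in $\mathbf{T}^2_X=\mathbb{H}^2(X,C^{\bullet})$, so it suffices to prove $\mathbf{T}^2_X=0$.

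From the two-column hypercohomology spectral sequence of $C^{\bullet}$ one sees that $\mathbf{T}^2_X=0$ as soon as $H^2(X,T_P|_X)=0$ and $H^1(X,\mathcal{N})=0$. The key point is that both are assertions about cohomology of bundles pulled back from $P$, hence can be computed from the Koszul resolution of $\mathcal{O}_X$ on $P$ together with the Künneth formula and Bott vanishing on the factors $\mathbb{P}^1$, $\mathbb{P}^1$, $\mathbb{P}^4$; in particular this computation is uniform in, and insensitive to, the singularities of $X$. I would first record from this Koszul resolution the groups $H^{\bullet}(X,\mathcal{O}_X(a,b,c))$ for the handful of multidegrees $(a,b,c)$ appearing below.

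Next I would assemble the two vanishings. Since $P$ is a product, $T_P|_X=\mathcal{O}_X(2,0,0)\oplus\mathcal{O}_X(0,2,0)\oplus(T_{\mathbb{P}^4}|_X)$, and the Euler sequence restricts to $0\to\mathcal{O}_X\to\mathcal{O}_X(0,0,1)^{\oplus 5}\to T_{\mathbb{P}^4}|_X\to 0$; thus $H^2(X,T_P|_X)=0$ follows once $H^2$ of $\mathcal{O}_X(2,0,0)$, $\mathcal{O}_X(0,2,0)$, $\mathcal{O}_X(0,0,1)$ and $H^3$ of $\mathcal{O}_X$ vanish, the last being Serre duality because $-K_X=\mathcal{O}_X(1,1,1)$ is ample, and the remaining three being part of the Koszul computation. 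Likewise $H^1(X,\mathcal{N})=0$ amounts to the vanishing of $H^1$ of $\mathcal{O}_X(1,0,1)$, $\mathcal{O}_X(0,1,1)$, $\mathcal{O}_X(0,0,2)$, again extracted from the Koszul resolution. Combining these gives $\mathbf{T}^2_X=0$, i.e.\ the deformations of $X$ are unobstructed.

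The only delicate point is the bookkeeping: making sure the local complete intersection property really does identify $R\mathcal{H}om_X(\mathbb{L}_X,\mathcal{O}_X)$ with the honest two-term complex $C^{\bullet}$, so that no higher sheaf-Ext intervenes in $\mathbf{T}^2_X$, even when $\mathrm{Sing}(X)$ is a curve rather than a finite set of nodes, and then correctly running the spectral-sequence reduction to the two cohomology vanishings. The vanishings themselves are then forced by Bott's theorem on the product of projective spaces, so no obstruction can survive; I expect the chief labour to be the verification of this finite list of cohomology groups rather than any conceptual difficulty.
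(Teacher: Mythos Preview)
Your argument is correct. Since $X$ is a local complete intersection in the smooth variety $P=\mathbb{P}^1\times\mathbb{P}^1\times\mathbb{P}^4$, the conormal sequence $0\to\mathcal{N}^{\vee}\to\Omega_P|_X\to\Omega_X^1\to 0$ is exact on the left, so the two-term complex $[\mathcal{N}^{\vee}\to\Omega_P|_X]$ really is quasi-isomorphic to $\Omega_X^1$, and your $\mathbf{T}^2_X$ coincides with the $\mathrm{Ext}^2_{\mathcal{O}_X}(\Omega_X^1,\mathcal{O}_X)$ that the paper computes. Your two required vanishings $H^2(X,T_P|_X)=0$ and $H^1(X,\mathcal{N})=0$ do follow from Koszul, K\"unneth, and Bott vanishing on the factors, exactly as you outline; the spot-checks go through.

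The paper, however, takes a shorter path. Rather than work with the full codimension-three embedding into $P$, it passes through the intermediate smooth Fano fourfold $B=\{u_1x=v_1y,\,u_2z=v_2t\}\subset P$, in which $X$ sits as an anticanonical-type \emph{divisor}. By Serre duality the obstruction group is $H^1(X,\Omega_X^1\otimes\omega_X)^{\vee}$, and the conormal sequence of $X$ in $B$ sandwiches this between $H^1(X,\Omega_B^1|_X\otimes\omega_X)$ and $H^2(X,\omega_B|_X)$; both vanish by Akizuki--Nakano and Kodaira vanishing on $B$, with no Koszul bookkeeping needed. Your approach trades these vanishing theorems for an explicit, entirely elementary computation on a product of projective spaces, which has the virtue of being algorithmic and independent of the geometry of $B$, at the cost of a longer list of line-bundle cohomology groups to tabulate. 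Either route lands at the same vanishing $\mathrm{Ext}^2_{\mathcal{O}_X}(\Omega_X^1,\mathcal{O}_X)=0$.
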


\begin{proof}
We follow the approach from the proof of \cite[Proposition 2.9]{Gokova}.
Since $X$ is a complete intersection in $\mathbb{P}^1 \times \mathbb{P}^1 \times \mathbb{P}^4$ and $X$  has at worst canonical singularities,
it follows from \cite[Theorems 2.3.2 and 2.4.1, Corollary 2.4.2]{Sernesi} and \cite[Propositions 2.4 and 2.6]{Sano} that the~deformations of $X$ are unobstructed if $\mathrm{Ext}^2_{\mathcal{O}_X}(\Omega^1_X,\mathcal{O}_X) = 0$.
As in \cite[\S 1.2]{Sano}, we have
$$
\mathrm{Ext}^2_{\mathcal{O}_X}(\Omega^1_X,\mathcal{O}_X) \cong \mathrm{Ext}^2_{\mathcal{O}_X}(\Omega^1_X \otimes \omega_X,\omega_X) \cong H^1(X,\Omega^1_X \otimes \omega_X)^{\vee}.
$$
Set $B=\{u_1x=v_1y,\, u_2z=v_2t\} \subset \mathbb{P}^1 \times \mathbb{P}^1 \times \mathbb{P}^4$.
Then $B$ is a smooth Fano fourfold, and~$X$ is a divisor in $B$. 
We have $\omega_B \cong \mathcal{O}_B(-1,-1,-3)$ and $\omega_B(X) \cong \mathcal{O}_B(-1,-1,-1)$.

Tensoring the conormal sequence of $X$ in $B$ with $\omega_X$ and taking cohomology, we obtain the exact sequence
$$
H^1\big(X,\Omega^1_B\big\vert_X \otimes \omega_X\big) \longrightarrow H^1\big(X,\Omega^1_X \otimes \omega_X\big) \longrightarrow H^2\big(X,\mathcal{O}_B(-X)\big\vert_X \otimes \omega_X\big).
$$
By adjunction and Kodaira vanishing we have
$$
H^2\big(X,\mathcal{O}_B(-X)\big\vert_X \otimes \omega_X\big) \cong H^2\big(X,\omega_B\big\vert_X\big) = 0.
$$
Moreover, from the exact sequence
$$
0 \longrightarrow \Omega^1_B \otimes \omega_B \longrightarrow \Omega^1_B \otimes \omega_B(X) \longrightarrow \Omega^1_B|_X \otimes \omega_X \longrightarrow 0
$$
we obtain the exact sequence of cohomology groups
$$
H^1\big(B,\Omega^1_B\otimes \omega_B(X)\big) \longrightarrow H^1\big(X,\Omega^1_B\big|_X \otimes \omega_X\big) \longrightarrow H^2\big(B, \Omega^1_B \otimes \omega_B\big).
$$
Then Akizuki-Nakano vanishing theorem gives
$$
H^1\big(B,\Omega^1_B\otimes \omega_B(X)\big) = H^2\big(B, \Omega^1_B \otimes \omega_B\big) = 0.
$$
This gives $H^1(X,\Omega^1_X \otimes \omega_X) = 0$, so $\mathrm{Ext}^2_{\mathcal{O}_X}(\Omega^1_X,\mathcal{O}_X) = 0$, completing the proof.
\end{proof}

From this lemma and Luna's \'{e}tale slice theorem \cite[Theorem 1.2]{AHD}, we see that $M^{\mathrm{Kps}}_{3,26}$ is normal at $[X]$ for all $[X]$ in the image of the morphism $\mathring{T}/(\mathbb{C}^\ast)^2\to M^{\mathrm{Kps}}_{3,26}$. 
Since both good moduli spaces are known to be proper, the~image of this morphism is a connected component of the~K-moduli space. This implies the Main Theorem.

We further note that the morphism $\mathring{T}/(\mathbb{C}^\ast)^2\to M^{\mathrm{Kps}}_{3,26}$ factors as
$$
\xymatrix{
\mathring{T}/(\mathbb{C}^\ast)^2\ar@{->}[rrrr]^(0.55){\text{quotient by an involution}}&&&&\mathbb{P}^2\ar@{->}[rr]^{\varsigma}&&M^{\mathrm{Kps}}_{3,26}},
$$
where  the map $\mathring{T}/(\mathbb{C}^\ast)^2\longrightarrow\mathbb{P}^2$ is described in Section~\ref{section:quotient-space}. 
Note that $\varsigma$ is generically injective and quasi-finite. 
So, by Zariski's Main Theorem, $\varsigma$ is an isomorphism onto 
the~connected component of $M^{\mathrm{Kps}}_{3,26}$ containing K-polystable smoothable Fano threefolds in deformation family \textnumero 3.10.


\begin{thebibliography}{3}

\bibitem{London}
H.~Abban, I.~Cheltsov, E.~Denisova, E.~Etxabarri-Alberdi, \\A.-S. Kaloghiros, D.~Jiao,
J.~Martinez-Garcia, T.~Papazachariou, \emph{One-dimensional components in \\the K-moduli of smooth Fano 3-folds}, to appear.

\smallskip

\bibitem{AbbanZhuang}
H.~Abban, Z.~Zhuang, \emph{K-stability of Fano varieties via admissible flags}, \\
Forum of Mathematics Pi \textbf{10} (2022), 1--43.

\smallskip

\bibitem{Alper}
J.~Alper, \emph{Good moduli spaces for Artin stacks}, \\
Annales de l'Institut Fourier \textbf{63} (2013), 2349--2402.

\smallskip

\bibitem{AHD}
J.~Alper, J.~Hall, D.~Rydh, \emph{A Luna \'{e}tale slice theorem for algebraic stacks}, \\
Ann. of Math. \textbf{191} (2020), no. 3, 675--738.

\smallskip

\bibitem{Book}
C.~Araujo, A.-M.~Castravet, I.~Cheltsov, K.~Fujita, A.-S.~Kaloghiros, J.~Martinez-Garcia, \\ C.~Shramov, H.~S\"u\ss, N.~Viswanathan,
\emph{The Calabi problem for Fano threefolds}, \\
Lecture Notes in Mathematics, Cambridge University Press, 2023.

\smallskip

\bibitem{Gokova}
I.~Cheltsov, T.~Duarte Guerreiro, K.~Fujita, \\ I.~Krylov, J.~Martinez-Garcia,
\emph{K-stability of Casagrande--Druel varieties}, to appear.

\smallskip

\bibitem{Dervan}
R.~Dervan, \emph{On K-stability of finite covers}, \\
Bulletin of the~London Mathematical Society \textbf{48} (2016), 717--728.

\smallskip

\bibitem{Fujita2019}
K.~Fujita, \emph{A valuative criterion for uniform K-stability of $\mathbb{Q}$-Fano varieties}, \\
Journal f\"ur die Reine und Angewandte Mathematik \textbf{751} (2019), 309--338.

\smallskip

\bibitem{Fujita2019b}
K.~Fujita, \emph{Uniform K-stability and plt blowups of log Fano pairs}, \\
Kyoto Journal of Mathematics \textbf{59} (2019), 399--418.

\smallskip

\bibitem{Li}
C.~Li, \emph{K-semistability is equivariant volume minimization}, \\
Duke Mathematical Journal \textbf{166} (2017), 3147--3218.

\smallskip

\bibitem{LiuZhu}
Y.~Liu, Z.~Zhu, \emph{Equivariant K-stability under finite group action}, \\
International Journal of Mathematics \textbf{33} (2022), paper No. 2250007.

\smallskip

\bibitem{Sano}
T.~Sano, \emph{On deformations of $\mathbb{Q}$-Fano $3$-folds}, \\
J. Algebraic Geom. \textbf{25} (2016), 141--176.

\smallskip

\bibitem{Sernesi}
E.~Sernesi, \emph{Deformations of algebraic schemes}, \\
Grundlehren Math. Wiss. \textbf{334}, Springer-Verlag, Berlin, 2006.

\smallskip

\bibitem{Zhuang}
Z.~Zhuang, \emph{Optimal destabilizing centers and equivariant K-stability},\\
Inventiones Mathematicae \textbf{226} (2021), 195--223.

\end{thebibliography}
\end{document}